\documentclass[10pt,a4paper,reqno]{amsart}
\usepackage{amsthm,amssymb,amstext,graphicx}
\usepackage{bbm}
\usepackage{enumerate}
\usepackage{amscd}
\usepackage{amssymb}
\usepackage{amsthm}
\usepackage{mathtools}
\usepackage{hyperref}
\usepackage[capitalise]{cleveref}
\usepackage{enumitem}
\usepackage[utf8]{inputenc}
\usepackage[T1]{fontenc}
\usepackage{lmodern}
\usepackage[bibliography= common]{apxproof}
%\usepackage{bibunits}
%\usepackage{MnSymbol} % \llangle,\rrangle
% \usepackage[ngerman]{babel}
%\usepackage[numbers,square]{natbib}
%\usepackage[style=alphabetic,bibstyle=alphabetic,maxnames=99,maxalphanames=5,natbib=true]{biblatex}
%%%%%%%%%%%%%%%%%%%%%%%%%%%%%%%%%%%%%%%%%%%%%%%%%%
%   fonts selection
%\usepackage{eulervm}
%\usepackage{cmbright}
%\usepackage{txfonts}
%%%%%%%%%%%%%%%%%%%%%%%%%%%%%%%%%%%%%%%%%%%%%%%%%%
\usepackage{hyperref}
\hypersetup{
  colorlinks=true,
  pdfpagelabels=true,
  pdfpagelayout==TwoColumnRight,
  pdftitle={},
  pdfauthor={© Sven Karbach},
  pdfsubject={Project2},
  pdfkeywords={}
}
%%%%%%%%%%%%%%%%%%%%%%%%%%%%%%%%%%%%%%%%%%%%%%%%%%
%$\numberwithin{equation}{section}
%\numberwithin{figure}{section}
%\numberwithin{table}{section}

\DeclareMathOperator{\vect}{vec}
\DeclareMathOperator{\Cov}{Cov}
%%%%%%%%%%%%%%%%%%%%%%%%%%%%%%%%%%%%%%%%%%%%%%%%%% 
% new commands
%%%%%%%%%%%%%%%%%%%%%%%%%%%%%%%%%%%%%%%%%%%%%%%%%%
\parindent0pt
\newcommand*\D{\mathop{}\!\mathrm{d}}
\newcommand*\E{\mathop{}\!\mathrm{e}}
\newcommand*\I{\mathop{}\!\textnormal{i}}

%%%%%%%%%%%%%%%%%%%%%%%%%%%%%%%%%%%%%%%%%%%%%%%%%%
% new enviroments
%%%%%%%%%%%%%%%%%%%%%%%%%%%%%%%%%%%%%%%%%%%%%%%%%%
%\theoremstyle{theorem}
\newtheorem{theorem}{Theorem}[section]
\newtheorem*{theorem*}{Theorem}
\newtheorem{lemma}[theorem]{Lemma}
\newtheorem{proposition}[theorem]{Proposition}
\newtheorem*{proposition*}{Proposition}
\newtheorem{corollary}[theorem]{Corollary}
\newtheorem*{corollary*}{Corollary}
\theoremstyle{plain}

\theoremstyle{definition}
\newtheorem{definition}[theorem]{Definition}
\newtheorem*{definition*}{Definition}

\newtheoremstyle{example}
  {.3\baselineskip}
  {.3\baselineskip}
  {\normalsize}  % BODYFONT
  {0pt}       % INDENT (empty value is the same as 0pt)
  {\bfseries} % HEADFONT
  {.}         % HEADPUNCT
  {5pt plus 1pt minus 1pt} % HEADSPACE
  {}          % CUSTOM-HEAD-SPEC
\theoremstyle{example}
\newtheorem{example}[theorem]{Example}

\newtheoremstyle{remark}
  {.2\baselineskip}
  {.2\baselineskip}
  % {\itshape}  % BODYFONT
  {\normalfont}
  {}
  {\bfseries}
  {\ifx\thmnote\@gobble.\else\normalfont.\fi}
  {.5em}
  {}
\theoremstyle{remark}
\newtheorem{remark}[theorem]{Remark}

\newlist{propenum}{enumerate}{1}
\setlist[propenum]{label=\roman*), ref=\textup{\thetheorem~\roman*)}}
\crefalias{propenumi}{proposition}

\newlist{theoremenum}{enumerate}{1}
\setlist[theoremenum]{label=\roman*), ref=\textup{\thetheorem~\roman*)}}
\crefalias{theoremenumi}{theorem}

\newlist{lemenum}{enumerate}{1}
\setlist[lemenum]{label=\roman*), ref=\textup{\thelemma~\roman*)}}
\crefalias{lemenumi}{lemma}

\newcommand{\op}[1]{\mathbf{#1}}

\newcommand{\opz}{\mathbf{z}}
\newcommand{\opA}{\mathbf{A}}

\newcommand{\opC}{\mathbf{C}}

\newcommand{\opI}{\mathbf{I}}

\newcommand{\opP}{\mathbf{P}}
\newcommand{\opQ}{\mathbf{Q}}

\newcommand{\bu}{\mathbf{u}}

\newcommand{\bx}{\mathbf{x}}

\newcommand{\bF}{\mathbf{F}}

\renewcommand{\MR}{\mathbb{R}}
\newcommand{\MC}{\mathbb{C}}

\newcommand{\MM}{\mathbb{M}}
\newcommand{\MN}{\mathbb{N}}
\newcommand{\MP}{\mathbb{P}}

\newcommand{\MI}{\mathbb{I}}

\newcommand{\MS}{\mathbb{S}}
\newcommand{\MF}{\mathbb{F}}
\newcommand{\cF}{\mathcal{F}}
\newcommand{\cA}{\mathcal{A}}
\newcommand{\cB}{\mathcal{B}}
\newcommand{\cC}{\mathcal{C}}
\newcommand{\cD}{\mathcal{D}}

\newcommand{\cK}{\mathcal{K}}
\newcommand{\cL}{\mathcal{L}}

\newcommand{\cO}{\mathcal{O}}

\newcommand{\cQ}{\mathcal{Q}}

\newcommand{\cG}{\mathcal{G}}
%\newcommand{\cM}{\mathcal{M}}

%%%%%%%%%%%%%%%%%%%% Math %%%%%%%%%%%%%%%%%%%%%%%%%%%%%%%%

\newcommand{\df}{\coloneqq}
\newcommand{\one}{\mathbbm{1}}
\DeclareMathAlphabet{\mymathbb}{U}{BOONDOX-ds}{m}{n}
\newcommand{\zero}{\mymathbb{0}}
\newcommand{\interior}[1]{({\kern0pt#1})^{\textnormal{o}}}
\newcommand{\set}[1]{\left\{ #1\right\}}
\newcommand{\norm}[1]{\|#1\|}
\newcommand{\llangle}{\langle\langle}
\newcommand{\rrangle}{\rangle\rangle}

\newcommand{\EX}[1]{\mathbb{E}\left[#1\right]}

\newcommand{\CEX}[2]{\mathbb{E}\left[#1\lvert\;#2\right]}
\newcommand{\Var}{\textnormal{Var}}

\newcounter{Task}\setcounter{Task}{1}
%%%%%%%%%%%%%%%% positive CARMA %%%%%%%%%%%%%%%%%%%%%%

\newcommand{\MRplus}{\MR^{+}}

%%%%%%%%%%%%%%%%%%%%%%%%%%%%%%%%%%%%%%%%%%%%%%%%%%%%%%%%%%%%
% Start of document
%%%%%%%%%%%%%%%%%%%%%%%%%%%%%%%%%%%%%%%%%%%%%%%
\begin{document}
%%%%%%%%%%%%%%%%%%%%%%%%%%%%%%%%%%%%%%%%%%%%%%% 
\title[MCARMA on cones]{Multivariate continuous-time
  autoregressive moving-average processes on cones}  
\author{Fred Espen Benth}
\address[Fred Espen Benth]{University of Oslo, Department of Mathematics, POBox
  1053 Blindern, N-0316 Oslo, Norway}  
\email{fredb@math.uio.no}

\author{Sven Karbach$^{\ast}$}
\address[Sven Karbach]{University of Hamburg, Department of Mathematics,
  Bundesstr. 55, 20146 Hamburg, Germany} 
\email{sven@karbach.org}

\thanks{$^{\ast}$The corresponding author Sven Karbach received financial
  assistance from The Dutch Research Council (NWO) (Grant No: C.2327.0099)}
\thanks{F. E. Benth acknowledges financial support from SPATUS, a thematic
  research group funded by UiO:Energy.}

\keywords{Multivariate CARMA processes, positive MCARMA, positive
  semi-definite processes, multivariate stochastic volatility}

%%%%%%%%%%%%%%%%%%%%%%%%%%%%%%%%%%%%%%%%%%%%%%%%%%%%%%%%%%%%%%%%%%%%%%%%%%%%%%%%% 
\begin{abstract} 
  In this article we study multivariate continuous-time autoregressive
  moving-average (MCARMA) processes with values in convex cones. More
  specifically, we introduce matrix-valued MCARMA processes with L\'evy noise
  and present necessary and sufficient conditions for processes from this
  class to be cone valued. We derive specific hands-on conditions in the
  following two cases: First, for classical MCARMA on $\MR_{d}$ with values in
  the positive orthant $\MR_{d}^{+}$. Second, for MCARMA processes on real
  square matrices assuming values in the cone of symmetric and positive
  semi-definite matrices. Both cases are relevant for applications and we give
  several examples of positivity ensuring parameter specifications. In addition
  to the above, we discuss the capability of positive semi-definite MCARMA
  processes to model the spot covariance process in multivariate
  stochastic volatility models. We justify the relevance of MCARMA based
  stochastic volatility models by an exemplary analysis of the second order
  moment structure of positive semi-definite well-balanced Ornstein-Uhlenbeck based
  models.\par{}
  % \textbf{Keywords:} Multivariate CARMA processes, positive MCARMA, positive
  % semi-definite processes, multivariate stochastic volatility
\end{abstract}

\maketitle

\vspace{-5mm}

\section{Introduction}\label{sec:bk22-introduction}
Multivariate continuous-time autoregressive moving-average processes are the
continuous time versions of the classical discrete-time VARMA models and have
been studied thoroughly over the last two decades, see \cite{MS07, SS12, BS13,
  Fas16, Kev18}. Similarly to their univariate analogs, the CARMA processes,
MCARMA processes can be interpreted as solutions to higher-order stochastic
differential equation of the form
\begin{align}\label{eq:bk22-MCARMA-higher-order-SDE}
  \mathrm{D}^{p}X_{t}+\tilde{A}_{1}\mathrm{D}^{p-1}X_{t}+\ldots+\tilde{A}_{p}X_{t}=\tilde{C}_{0}\mathrm{D}^{q+1}L_{t}+\tilde{C}_{2}\mathrm{D}^{q}L_{t}+\ldots+\tilde{C}_{q}\mathrm{D}
  L_{t},
\end{align}
where $\mathrm{D}=\frac{\D}{\D t}$, $(\tilde{A}_{i})_{i=1,\ldots,p}$ and
$(\tilde{C}_{j})_{j=0,\ldots,q}$ for $q,p\in\MN$ are two families of linear operators and
$(L_{t})_{\in\MR}$ denotes a multivariate L\'evy process. Naturally,
equation~\eqref{eq:bk22-MCARMA-higher-order-SDE} asks for a rigorous
definition as the paths of L\'evy processes are in general not
differentiable. Heuristically, however, we can interpret~\eqref{eq:bk22-MCARMA-higher-order-SDE} as the continuous-time version
of a (V)ARMA difference equation and we therefore expect that similar key
features governed by the autoregressive and moving-average structure of the
defining equation find their counterpart in the continuous-time
setting. Indeed, one notable feature of the (M)CARMA class is its
  flexible auto-covariance structure, which has the potential to model a wide
  variety of \emph{short memory} effects observed in many time series in
  applications and justifies the popularity of modeling with (M)CARMA
  processes in subjects ranging from finance over meteorology to natural science
  and engineering, see e.g.~\cite{JBB91, ML04, TT06, BB09}. In general, short
  memory refers to an exponentially fast decaying auto-covariance function such as,
  e.g., Ornstein-Uhlenbeck (OU) processes induce, which, as a matter of fact,
  form the subclass of MCAR processes of order one. MCARMA processes of
  general order, however, exhibit much more nuanced auto-covariance functions
  compared to its OU subclass.\par  
 
In many applications, where (M)CARMA models are employed, a crucial model
feature is positivity, e.g. in modeling wind speed~\cite{BB09, BCR21}, the
velocity field in turbulence~\cite{BNBV18} or volatility in finance~\cite{BNS01,
  TT06, BL13, BNS13}. It is therefore of great importance to understand the
capability of (M)CARMA processes to model phenomena with positive states. In the
univariate case positive CARMA processes were studied in~\cite{TC05, TC09,
  BDY11, BL13, BR19, NR21}. In particular, the authors in~\cite{TC05} give a set
of necessary and/or sufficient parameter conditions such that CARMA processes of
general order driven by a L\'evy subordinator assume only non-negative
values. Positivity of L\'evy-driven (multivariate) OU
  processes and its applications have been investigated in a number of
  articles~\cite{BNS01, BNS07, BDY11, PS09}. In particular, in~\cite{BNS07} the
  authors study OU processes on the cone of positive semi-definite matrices. With the present article we contribute to the growing literature on
MCARMA processes by studying positivity of multivariate CARMA processes
extending the univariate results in~\cite{TC05}. More
precisely, we introduce the class of matrix-valued MCARMA processes which
extends the classical $\MR_{d}$-valued MCARMA processes and present necessary and
sufficient conditions for matrix-valued MCARMA processes to be cone
valued. The particularly interesting cases of $\MR_{d}^{+}$, the positive
orthant in $\MR_{d}$, and  $\MS_{d}^{+}$, the cone of symmetric positive
semi-definite $d\times d$ matrices, are included in our analysis.\medskip

The starting point of our analysis will be the formulation of linear
continuous-time state space models on the space of real $n\times m$-matrices. It
is well known that $\MR_{d}$-valued MCARMA processes are characterized by
certain configurations of continuous-time state space models, see
e.g.~\cite{MS07, SS12, BS13}. Linear continuous-time state space models are
essentially given by an Ornstein-Uhlenbeck type process on the Cartesian product
of the particular state space, in our case the space of all $n\times
m$-matrices, and a linear output operator mapping the values of the
higher-dimensional OU type process into the state space again. We show that for
certain specifications of such models the vectorizations of stationary output
processes are equivalent to MCARMA processes as they were introduced
in~\cite{MS07}. In this sense the matrix-valued state space models give rise to
the novel class of matrix-valued MCARMA processes. The details are given in
Section~\ref{sec:bk22-carma-proc-on-matrices} below. Since in some applications one
is interested in non-stable systems, we explicitly include the class of
non-stable state space models in our analysis. Non-stable models often correspond to
so called non-causal MCARMA processes, i.e. MCARMA processes that are not
adapted to the natural filtration of its driving L\'evy process. A careful distinction between the stable and
non-stable case is justified, since the stability conditions may interact with the
imposed cone-invariance constraints.\par{} 

Once we set up the class of matrix-valued MCARMA processes we study its
invariance with respect to convex cones. In particular, we are looking for necessary and sufficient
parameter conditions such that a matrix-valued MCARMA process driven by a
multivariate cone-increasing L\'evy process takes values solely in this cone. 
As noted above, in the univariate case positivity, i.e. $\MR^{+}$-
invariance, of CARMA processes is well-studied and the relevance in applications is widely recognized.  In the multivariate setting, however,
cone-invariance of MCARMA processes has not yet been studied in a systematical
way. Partial results exist in the recent work~\cite{NR21}, where the authors
derive conditions ensuring the positivity of (univariate) CARMA processes. The
authors claim that some parts of their results could be extended to the
multivariate CARMA case, however, only few information about this extension are
provided. Our Theorem~\ref{thm:bk22-positivite-non-stable-MCARMA} below supports
this claim to some extent.\par{}   

The main motivation for studying matrix-valued MCARMA processes on cones comes from multivariate stochastic volatility modeling, see e.g.~\cite{BNS07, LT08, PS09, BNS13, CFMT11}. A multivariate stochastic volatility model consists of
a $d$ dimensional (logarithmic) price process and a spot covariance
process that modulates the multivariate noise of the former and takes values
in the cone of symmetric and positive semi-definite matrices. Classical choices for the spot covariance process are either pure-jump based models,
such as OU type models and superposition of these, so called
supOU processes, see~\cite{BNS07, PS09}, pure diffusion-based models such as the
Wishart processes, see~\cite{Bru91}, or mixtures of both, e.g. affine jump-diffusion
models, see~\cite{LT08, CFMT11}. We propose to use MCARMA processes on
symmetric and positive semi-definite matrices to model the spot covariance
process in multivariate stochastic volatility models. For one thing, the popular multivariate extension of the
Barndorff--Nielsen and Shepard (BNS) stochastic volatility model in~\cite{PS09} is included
in the much broader class of positive semi-definite MCARMA based models. In
addition, the nuanced short memory structure of the MCARMA class has the
potential to explain phenomena like non-monotone auto-correlation functions with
polynomial diminished exponential decay, that are often
observed in realized (co)variance time-series of financial data and that go
beyond the auto-correlation structure induced by OU-type models, see
Section~\ref{lem:bkk22-r++-well-balanced-ou} below. We demonstrate the capability of MCARMA based stochastic volatility models by means of
\emph{positive semi-definite well-balanced Ornstein-Uhlenbeck processes}. This
class stems from a particular configuration of MCARMA processes on positive
semi-definite matrices and we present a detailed analysis of it in
Section~\ref{sec:bk22-mcarma-based-mult}. Lastly, we want to mention that also in a discrete-time setting studying
autoregressive matrix-valued models is an active area of research, see
e.g.~\cite{ZJZL19, CXY21}. Many articles on (M)CARMA models deal with their
connection to the discrete-time setting, e.g. studying high frequency sampling
for MCARMA in \cite{Fas16, Kev18} or parameter estimation of the driving
noise from discrete observations in~\cite{SS12}. Our findings on the
equivalence of the vectorized matrix-valued MCARMA and the classical MCARMA
suggests that analogous connections to the discrete-time setting continue to
hold in the matrix-valued case and the good accessibility of the classical
MCARMA is maintained accordingly.

\vspace{-1mm}

\subsection{Layout of the article}\label{sec:bk22-layout-article}

This article is structured as follows: In
Section~\ref{sec:bk22-carma-proc-on-matrices} we define a class of continuous-time state
space models on matrices and show the equivalence with the classical MCARMA class. In
Section~\ref{sec:bk22-posit-mcarma-proc} we study the cone invariance of matrix- and
vector-valued MCARMA processes with a particular focus on the positive orthant
in $\MR_{d}$ and the cone of symmetric positive semi-definite
matrices. Lastly, in Section~\ref{sec:bk22-mcarma-based-mult} we propose an MCARMA
based multivariate stochastic volatility model and demonstrate its capability
to capture memory effects by an exemplary analysis of positive semi-definite
well-balanced OU processes.

\subsection{Notation}\label{sec:bk22-notation}
By $\MN$ we denote the set of integers and let $\MN_{0}\df\MN \cup
\set{0}$. For a complex number $z=a+\I b\in\MC$ we denote its real part $a$ by
$\Re(z)$ and its imaginary part $b$ by $\Im(z)$. For $d\in\MN$, we denote by
$\MR_{d}$ the $d$-dimensional Euclidean space equipped with the standard
inner-product $(\cdot, \cdot )_{d}$. The closed positive orthant in $\MR_{d}$
will be denoted by $\MR_{d}^{+}$ and the canonical basis in $\MR_{d}$ by $\set{e_{1},e_{2},\ldots,e_{d}}$.

\paragraph{\textbf{Matrices}}
Let $n,m\in\MN$ and let $\mathbb{K}$ denotes either a field or a ring. Then we
write $\MM_{n,m}(\mathbb{K})$ for the set of all $n\!\times\! m$ matrices over
$\mathbb{K}$. If $n=m$ we write $\MM_{n}(\mathbb{K})$ and if $\mathbb{K}=\MR$
we simplify to $\MM_{n,m}$. If $m=1$ we have $\MM_{n,1}=\MR_{n}$ and use the
latter notation. The $p$-times Cartesian product of $\MM_{n,m}$ will be denoted
by $(\MM_{n,m})^{p}$, which is just equivalent to $\MM_{pn,m}$, but we use the
former notation as it is more suggestive. In the case where $\mathbb{K}=\MR[\lambda]$
is the polynomial ring over $\MR$ the set $\MM_{n}(\MR[\lambda])$ denotes the space
of all matrix polynomials with coefficients in $\MM_{n}$. We refer
to~\cite{GLR09} for a comprehensive analysis of matrix polynomials.
We denote the transpose of a matrix $A\in\MM_{n,m}$ by $A^{\intercal}$, which
is an element in $\MM_{m,n}$, and write $\MS_{d}$ for the subspace in $\MM_{n}$ consisting of all
symmetric $n\times n$ matrices, i.e. all $A\in\MM_{n}$ such that
$A^{\intercal}=A$. The set of all symmetric positive semi-definite $n\times
n$-matrices will be denoted by $\MS_{n}^{+}$,
i.e. $\MS_{n}^{+}=\set{A\in\MS_{n}\colon (A x,x)_{n}\geq 0,\,\forall
  x\in\MR_{n}}$. If necessary, we can express real $n\times m$-matrices in a
component-wise notation by $A=(a_{i,j})_{1\leq i\leq n, 1\leq j\leq m}$ for
$a_{i,j}\in\MR$. For all $n,m\in\MN$ we can identify $\MM_{n,m}$ with
$\MR_{nm}$ through the vectorization operator $\vect\colon \MM_{n,m}\to
\MR_{nm}$ which transforms a matrix into a vector by stacking the columns below each other. Similarly, we
denote by $\mathrm{vech}\colon \MS_{n}\to \MR_{n(n+1)/2}$ the operator that
stacks only the lower triangular part of a symmetric matrix below another. On
$\MM_{n,m}$ we consider the inner-product $\langle\cdot, \cdot\rangle_{nm}$
given by $\langle A,B\rangle_{nm}=(\vect(A),\vect(B))_{nm}$ and denote the
induced norm by $\norm{\cdot}_{nm}$. Let $n_{1},n_{2},m_{1},m_{2}\in\MN$, then
for $A\in\MM_{n_{1},m_{1}}$ and $B\in\MM_{n_{2},m_{2}}$ we denote the
Kronecker product of $A$ and $B$ by $A\otimes B\in
\MM_{n_{2}n_{1},m_{2}m_{1}}$. We denote the Hadamard product of two matrices
$A,B\in\MM_{n,m}$ by $A\odot B$ and let $\one_{n,m}=(1_{i,j})_{1\leq i\leq
  n,1\leq j\leq m}$ stand for the matrix in $\MM_{n,m}$ which is equal to one
in every component and $\zero_{n,m}$ denotes the $n\times m$-zero matrix. If
$n=m$ we write $\one_{n}\df \one_{n,n}$, $\zero_{n}\df\zero_{n,n}$ and denote
the identity matrix by $\MI_{n}$.

\paragraph{\textbf{Linear operators on matrices}}
We denote the algebra of all linear operators from  $\MM_{n_{1},m_{1}}$ to $\MM_{n_{2},m_{2}}$ by
$\cL(\MM_{n_{1},m_{1}},\MM_{n_{2},m_{2}})$. If $n_{1}=n_{2}=n$ and
$m_{1}=m_{2}=m$ we write $\cL(\MM_{n,m})$ and if $m=1$, it is well known that
$\cL(\MM_{n,1})=\cL(\MR_{n})\simeq \MM_{n}$ and we use the latter notation. If
$n,m\in\MN$ are greater than one, we will denote elements of $\cL(\MM_{n,m})$ by bold face letters, e.g. $\opA\in\cL(\MM_{n,m})$ versus $A\in \MM_{n,m}$ and we reserve the calligraphic
letters, e.g. $\cA$, for linear operators mapping from or to
$(\MM_{n,m})^{p}$ for some integer $p>1$. Moreover, we denote
  by $\cA^{*}$ (or $\opA^{*}$) the adjoint operator of $\cA$ (respectively $\opA$). Since in the sequel we will
always make sure that there is no confusion regarding the matrix space that we
are operating in, we denote the identity operator in $\cL(\MM_{n,m})$ simply by
$\opI$ and will only index $\opI$, when we speak of the identity in
$\cL((\MM_{n,m})^{p})$, in which case we write $\opI_{p}$. For every $\opA\in
\cL(\MM_{n,m})$ we denote its spectrum by $\sigma(\opA)$, which, as we work in
finite-dimensions, is just the set of eigenvalues of $\opA$. 
Moreover, we denote the spectral bound of $\opA$ by $\tau(\opA)$,
i.e. $\tau(\opA)\df \max\set{\Re(\lambda)\colon \lambda \in \sigma(\opA)}$.

\paragraph{\textbf{Matrix-valued L\'evy processes}}
Let $(\Omega,\cF,\MF,\MP)$ be a filtered probability space satisfying the
usual conditions and let $(L_{t})_{t\geq 0}$ be an $\MR_{nm}$-valued L\'evy
process defined on this probability basis, see~\cite{Sat99} for a comprehensive
analysis of multivariate L\'evy processes. Since we can always identify
$\MM_{n,m}$ with $\MR_{nm}$ the class of multivariate L\'evy processes easily
extends to matrix-valued L\'evy processes. We recall that the L\'evy
characteristic exponent at $z\in\MM_{n,m}$ is given by  
  \begin{align}
    \label{eq:bk22-chracteristic-Levy}
    \psi_{L}(z)=\I \langle
    \gamma_{L},z\rangle_{nm}\!-\!\frac{1}{2}\langle Q_{1}
    z,z\rangle_{nm}+\int_{\MM_{n,m}}\hspace{-3mm}\big(\E^{\I \langle
    \xi,z\rangle_{nm}}-1-\I\langle \chi(\xi),z\rangle_{nm}\big)\nu_{L}(\D\xi),
  \end{align}
  where $\gamma_{L}\in\MM_{n,m}$, $Q_{1}$ is the \emph{covariance operator} of
  the continuous part of the L\'evy process, $\chi(\xi)\df\xi
  \one_{\norm{\xi}_{nm}\leq 1}(\xi)$ and $\nu_{L}\colon\cB(\MM_{n,m})\to
  \MRplus$ is the \emph{L\'evy measure}. We call a L\'evy process
  $(L_{t})_{t\geq 0}$ \emph{integrable}, if $\EX{\norm{L_{t}}_{nm}}<\infty$ for
  all $t\geq 0$ and
  \emph{square-integrable} whenever $\EX{\norm{L_{t}}^{2}_{nm}}<\infty$ for
  all $t\geq 0$. For a square-integrable L\'evy process $L$ with characteristic
  exponent given by~\eqref{eq:bk22-chracteristic-Levy} the mean of $L_{1}$ is
  denoted by $\mu_{L}$ and we have  $\mu_{L}=\big(\gamma_{L}+\int_{\MM_{n,m}\cap\set{\xi\colon\norm{\xi}_{n,m}>
      1}}\xi\nu_{L}(\D\xi)\big)$. Moreover, we denote by $\cQ\in\cL(\MM_{n,m})$
  the covariance operator of $L_{1}$, given by $\cQ=Q_{1}+\int_{\MM_{n,m}}\xi\otimes\xi \nu_{L}(\D\xi)$. For any L\'evy process
  $(L^{(1)}_{t})_{\geq 0}$ defined on the positive real line $\MRplus$, we can
  choose a second independent and identically distributed L\'evy process
  $(L_{t}^{(2)})_{t\geq 0}$ to define a \emph{two-sided L\'evy process}
  $(L_{t})_{t\in\MR}$ by
  \begin{align}\label{eq:two-sided-Levy}
   L_{t}\df\one_{\MRplus}(t)L^{(1)}_{t}-\one_{\MR^{\!\!\!\!~^{-}}}(t)L_{-t-}^{(2)},  
  \end{align}
  where $L^{(2)}_{t-}\df\lim_{s\nearrow t}L^{(2)}_{s}$ for all $t\geq 0$ and
  $\MR^{-}\df-\MRplus\setminus\set{0}$. Throughout this
  article we use the conventional and intuitive notation of stochastic
  integration with respect to matrix-valued integrators analogous to, e.g.,~\cite{BNS07, BNS13}.

\section{Linear state-space models and matrix-valued MCARMA}\label{sec:bk22-carma-proc-on-matrices}

Throughout this section we fix $m,n\in\MN$ and let $(\Omega,\cF,\MF,\MP)$
denote a filtered probability space satisfying the usual conditions. Moreover,
we assume that $(\Omega,\cF,\MF,\MP)$ is rich enough to carry an
$\MM_{n,m}$-valued two-sided L\'evy process
$L=(L_{t})_{t\in\MR}$. All proofs of this section are either
  deferred to the appendix or can be found in the pertinent literature. We begin this section by introducing a very general class
of linear \emph{continuous-time state space models} defined on real $n\times
m$-matrices:   

\begin{definition}\label{def:bk22-state-space-model}
  Let $p\in\MN$ and let the tuple $(\cA,\cB,\cC,L)$ consist of a \emph{state
    transition operator} $\cA\in \cL\big((\MM_{n,m})^{p}\big)$, an \emph{input operator}
  $\cB\in\cL\big(\MM_{n,m},(\MM_{n,m})^{p}\big)$, an \emph{output operator}
  $\cC\in\cL\big((\MM_{n,m})^{p},\MM_{n,m}\big)$ and an $\MM_{n,m}$-valued two-sided
  L\'evy process $L=(L_{t})_{t\in\MR}$. A \emph{continuous-time linear
  state space model} on $\MM_{n,m}$, associated with the parameter set $(\cA,\cB,\cC,L)$, consists of a
  \textit{state-space equation} given by
  \begin{align}\label{eq:bk22-state-space-Z}
    \D Z_{t}=\cA Z_{t}\D t+\cB \D L_{t},\quad t\in\MR,
  \end{align}
  and an \emph{observation equation} given by
  \begin{align}\label{eq:bk22-output-X}
    X_{t}=\cC Z_{t},\quad t\in\MR.
  \end{align}
  We call the $(\MM_{n,m})^{p}$-valued process $(Z_{t})_{t\in\MR}$ the
  \emph{state process} and the $\MM_{n,m}$-valued process $(X_{t})_{t\in\MR}$
  the \emph{output process} of the continuous-time linear state space model
  (associated with $(\cA,\cB,\cC,L)$). 
\end{definition}

Continuous-time linear state space models have been rigorously studied in the
deterministic and stochastic control literature over many decades, see e.g.~\cite{ZD63, AST06, FR00}. 
We note that if $m=1$ (that means for the state space $\MR_{n}=\MM_{n,1}$) our definition of a linear
continuous-time state space model coincides with the one in~\cite[Definition
3.1]{SS12}.\par{}

We note that the state process $(Z_{t})_{t\in\MR}$ of a linear continuous-time
state space model is simply a L\'evy driven Ornstein-Uhlenbeck type process on the
space $(\MM_{n,m})^{p}$ and a solution to~\eqref{eq:bk22-state-space-Z} is given by
the following variation-of-constant formula: 
\begin{align}\label{eq:bk22-Z}
  Z_{t}=\E^{(t-s)\cA}Z_{s}+\int_{s}^{t}\E^{(t-u)\cA}\cB\D L_{u},\quad s<t\in\MR.
\end{align}
In general, a solution $(Z_{t})_{t\in\MR}$ to~\eqref{eq:bk22-state-space-Z} is not
unique without specifying an initial value. If for some $s\in\MR$ we
are given an $\cF_{s}$-measurable random variable $Z_{s}$, then
$(Z_{t})_{t\geq s}$ given by~\eqref{eq:bk22-Z} is the
unique solution to~\eqref{eq:bk22-state-space-Z} on $[s,\infty)$ adapted to the
filtration $(\cF_{t})_{t\geq s}$. Moreover, if the spectral bound of the transition operator $\cA$ is strictly negative,
i.e. $\tau(\cA)<0$, then it follows from~\cite{SY83, CM87}
that there exists a unique \emph{stationary solution} to~\eqref{eq:bk22-state-space-Z} if
and only if $\EX{\log(\norm{L_{1}}_{nm})}<\infty$. In this
case the unique stationary solution $(Z_{t})_{t\in\MR}$ is adapted and given by
\begin{align}\label{eq:bk22-Z-stationary}
  Z_{t}=\int_{-\infty}^{t}\E^{(t-s)\cA}\cB \D L_{s},\quad t\in\MR.
\end{align}

\begin{remark}[Uniqueness, stationarity and adaptedness]\label{rem:uniqueness}
  Note that in case of $\tau(\cA)\geq 0$, there could still, under certain
  conditions, exist a (unique) stationary solution
  to~\eqref{eq:bk22-state-space-Z} on $\MR$, see also
  Proposition~\ref{prop:bk22-state-space-MCARMA} below. However, it
  might happen that the stationary solution is not adapted to the natural
  filtration $\MF^{L}$, since $Z_{t}$ possibly depends on the generated
  sigma-algebra $\sigma(L_{s}\colon s>t)$. A concrete example of
  a stationary output process with $\tau(\cA)\geq 0$ is given in
  Section~\ref{sec:bk22-posit-semi-defin-2}. If from a modeling
  perspective adaptedness to the natural filtration is required, then we shall
  either assume $\tau(\cA)<0$, for which the existence of a unique
  adapted solution is known by the reasoning above. Or in case of
  $\tau(\cA)\geq 0$, there may exist many solutions, but only one for every
  fixed $\cF_{s}$-measurable initial condition, which also happens to be
  adapted, but possibly non-stationary.
\end{remark}

\vspace{3mm}

In the sequel we often distinguish between the two cases in
Remark~\ref{rem:uniqueness} which we call
the \emph{stable} (where $\tau(A)<0$) and \emph{non-stable} (where
$\tau(A)\geq 0$) case (following the usual nomenclature it is
actually the \emph{exponentially stable} and \emph{non-exponentially stable} case). We find it
appropriate to give a precise definition to prevent confusion:

\begin{definition}\label{def:bk22-stable-processes}
  We call a continuous-time state space model associated with the parameter set
  $(\cA,\cB,\cC,L)$ \emph{stable}, whenever $\tau(\cA)<0$. If
  moreover, $L$ has finite log-moments, i.e. $\EX{\log(\norm{L_{1}}_{n,m})}<\infty$, then whenever we refer to the state process $(Z_{t})_{t\in\MR}$ we mean the unique stationary
  solution given by~\eqref{eq:bk22-Z-stationary}. In this case we call
  $(Z_{t})_{t\in\MR}$ the \emph{stable state process} and
  $(X_{t})_{t\in\MR}$ the \emph{stable output process}. In case
  of $\tau(\cA)\geq 0$, we call the state space model \emph{non-stable} and refer
  to $(Z_{t})_{t\in\MR}$ in~\eqref{eq:bk22-Z} simply as a state process. If
  uniqueness is required, we may fix an initial value $Z_{s}$ at $s\in\MR$ for
  some $\cF_{s}$-measurable random variable $Z_{s}$.
\end{definition}

Given a state process $(Z_{t})_{t\in\MR}$ we now shift our focus to the
output process $(X_{t})_{t\in\MR}$ defined in~\eqref{eq:bk22-output-X}. In the
next proposition we summarize some well known and easy to check properties of
$(X_{t})_{t\in\MR}$.  

\begin{proposition}\label{prop:bk22-output-process-properties}
  Let $(\cA,\cB,\cC,L)$ be as in Definition~\ref{def:bk22-state-space-model} and
  let $\psi_{L}$ be the characteristic exponent of $L$ given
  by~\eqref{eq:bk22-chracteristic-Levy}. Then
  the process $(X_{t})_{t\in\MR}$ in~\eqref{eq:bk22-output-X} satisfies 
  \begin{align}\label{eq:bk22-X-variation-of-constants}
    X_{t}=\cC \E^{(t-s)\cA}Z_{s}+\int_{s}^{t}\cC \E^{(t-u)\cA}\cB\D
    L_{u},\quad s<t\in\MR,   
  \end{align}
  and for every $x\in \MM_{n,m}$ and $s\leq t$ we have
  \begin{align}\label{eq:bk22-characteristic-function}
    \CEX{\E^{\I\langle X_{t},x\rangle_{mn}}}{\mathcal
    \cF_s}&=\exp\Big(\I\langle\cC\E^{(t-s)\cA}Z_s,x\rangle_{mn}+\int_{s}^{t}\psi_L\big(
    \cB^{*}\E^{(t-u)\cA^{*}}\cC^{*}x\big)\,\D u\Big).
\end{align}
If $L$ is integrable, then  the conditional mean of $X_{t}$ is finite and
given by
\begin{align}\label{eq:bk22-X-mean}
  \CEX{X_{t}}{\cF_{s}}=\cC\E^{(t-s)\cA}Z_s+\int_{0}^{t-s}\cC\E^{u\cA}\cB \mu_{L}\,\D
  u,\quad s<t\in\MR.
\end{align}

If moreover $L$ has finite log-moments and $(X_{t})_{t\in\MR}$ is a stable output
process, then it is stationary and given by
  \begin{align}\label{eq:bk22-X-stationary}
    X_{t}=\int_{-\infty}^{t}\cC \E^{(t-s)\cA}\cB \D L_{s},\quad t\in\MR,
  \end{align}
  and if in addition $L$ is integrable, then the mean of $(X_{t})_{t\in\MR}$
  is given by
  \begin{align}\label{eq:bk22-X-mean-stationary}
    \EX{X_{t}}=-\cC\cA^{-1}\cB\mu_{L},\quad t\in\MR.  
  \end{align}  
\end{proposition}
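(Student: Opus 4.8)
The plan is to derive all five identities from the variation-of-constants representations of the state process, combined with the exponential-integrand property of stochastic integrals against a L\'evy process. Identity~\eqref{eq:bk22-X-variation-of-constants} is immediate: since $\cC$ is a linear operator between the finite-dimensional spaces $(\MM_{n,m})^{p}$ and $\MM_{n,m}$, applying it to~\eqref{eq:bk22-Z} and pulling it inside the deterministic-kernel stochastic integral yields $X_{t}=\cC Z_{t}=\cC\E^{(t-s)\cA}Z_{s}+\int_{s}^{t}\cC\E^{(t-u)\cA}\cB\,\D L_{u}$. In the stable case the identical computation applied to the stationary representation~\eqref{eq:bk22-Z-stationary} gives~\eqref{eq:bk22-X-stationary}, and stationarity of $(X_{t})_{t\in\MR}$ is inherited directly from that of the stationary state process $(Z_{t})_{t\in\MR}$.

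The heart of the argument is the conditional characteristic function~\eqref{eq:bk22-characteristic-function}. First I would split the right-hand side of~\eqref{eq:bk22-X-variation-of-constants} into the $\cF_{s}$-measurable term $\cC\E^{(t-s)\cA}Z_{s}$ and the stochastic integral over $(s,t]$; because $L$ has independent increments, the latter is independent of $\cF_{s}$. Moving the operators onto $x$ via adjoints, $\langle\cC\E^{(t-u)\cA}\cB\,\D L_{u},x\rangle=\langle\D L_{u},\cB^{*}\E^{(t-u)\cA^{*}}\cC^{*}x\rangle$, the conditional characteristic function factors as the deterministic phase $\exp(\I\langle\cC\E^{(t-s)\cA}Z_{s},x\rangle)$ times the unconditional characteristic function of the stochastic integral with deterministic integrand $f(u)\df\cB^{*}\E^{(t-u)\cA^{*}}\cC^{*}x$. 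The key lemma I would invoke is that for a deterministic measurable integrand $f$ one has $\EX{\exp(\I\int_{s}^{t}\langle f(u),\D L_{u}\rangle)}=\exp(\int_{s}^{t}\psi_{L}(f(u))\,\D u)$; multiplying the two factors produces exactly~\eqref{eq:bk22-characteristic-function}.

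The two mean formulas then follow by a standard first-moment computation once integrability of $L$ is assumed. One route is to differentiate~\eqref{eq:bk22-characteristic-function} in $x$ at the origin; a cleaner route is to take conditional expectations directly in~\eqref{eq:bk22-X-variation-of-constants}, using the heuristic $\CEX{\D L_{u}}{\cF_{s}}=\mu_{L}\,\D u$ for $u>s$ (rigorously, $\CEX{L_{t}-L_{s}}{\cF_{s}}=(t-s)\mu_{L}$ together with a Fubini argument) to obtain $\CEX{X_{t}}{\cF_{s}}=\cC\E^{(t-s)\cA}Z_{s}+\int_{s}^{t}\cC\E^{(t-u)\cA}\cB\mu_{L}\,\D u$, and the substitution $u\mapsto t-u$ then gives~\eqref{eq:bk22-X-mean}. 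For the stationary mean~\eqref{eq:bk22-X-mean-stationary}, taking expectations in~\eqref{eq:bk22-X-stationary} and substituting $v=t-s$ yields $\EX{X_{t}}=\cC\big(\int_{0}^{\infty}\E^{v\cA}\,\D v\big)\cB\mu_{L}$, and since $\tau(\cA)<0$ the operator integral converges to $-\cA^{-1}$, producing the claimed expression.

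The one step requiring genuine care is the exponential-integrand lemma used in the second paragraph. I would establish it first for simple piecewise-constant integrands, where independence of increments factorizes the expectation into a product of single-increment characteristic functions $\exp((t_{i}-t_{i-1})\psi_{L}(a_{i}))$, using that $\EX{\exp(\I\langle a,L_{r}\rangle)}=\exp(r\psi_{L}(a))$ by the L\'evy--Khintchine formula~\eqref{eq:bk22-chracteristic-Levy}; the general case then follows by approximating $f$ by step functions and passing to the limit, with the continuity of $\psi_{L}$ controlling the exponents. Everything else reduces to elementary linear algebra and the identity $\int_{0}^{\infty}\E^{v\cA}\,\D v=-\cA^{-1}$, valid under $\tau(\cA)<0$.
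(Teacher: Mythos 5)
Your proposal is correct, and all five identities are derived exactly as intended: linearity of $\cC$ applied to the variation-of-constants and stationary representations of the state process, the exponential formula $\EX{\exp(\I\int_{s}^{t}\langle f(u),\D L_{u}\rangle)}=\exp\big(\int_{s}^{t}\psi_{L}(f(u))\,\D u\big)$ for deterministic integrands (proved via step functions and independent increments), and direct first-moment computations together with $\int_{0}^{\infty}\E^{v\cA}\,\D v=-\cA^{-1}$ under $\tau(\cA)<0$. Note that the paper itself gives no proof — it explicitly describes these as ``well known and easy to check properties'' — so your argument is precisely the standard verification the authors leave to the reader, and it contains no gaps.
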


From~\eqref{eq:bk22-X-stationary} we see that the dynamics of a stable output
process $(X_{t})_{t\in\MR}$ are only governed by the parameters
$\cA,\cB$ and $\cC$ and the L\'evy process $L$. More specifically, the
dynamics depend solely on the action of the kernel function
$g\colon\MRplus\to\cL(\MM_{n,m})$ given by $g(t)\df\cC\E^{\cA t}\cB$ applied to
the increments of $(L_{t})_{t\in\MR}$. In contrast, we see that in the
non-stable case, the dynamics of $(X_{t})_{t\geq s}$ also depend on the
initial value $Z_{s}$ for $s<t$ through the term
$\cC\E^{(t-s)\cA}Z_{s}$. As we will see, this has some consequences for the
techniques available to study the cone-invariance of output processes in Section~\ref{sec:bk22-posit-mcarma-proc}.\par{}   

\subsection{Controller canonical form and matrix-valued MCARMA processes}
In this section we introduce a more particular form of linear continuous-time state
space models on $\MM_{n,m}$ which in the discrete-time control literature is
often called the \emph{controller canonical form}. This controller canonical form will
prove itself useful for two reasons: First, it will allow us to interpret the
output processes of certain continuous-time state space models as a linear
transformation of MCARMA processes as they were introduced in the seminal
work~\cite{MS07}. Second, this form is particularly convenient to study
positivity questions in the next section.\par{}

\vspace{3mm}
%%%%%%%%%%%%%%%%%%%%%%%%%%%%%%%%%%%%%%%%%%%%%%%%%%%%%%%%%%%%%%%%%%%%%%%%%%%%%%%%%%%%%

\begin{definition}
Let $p\in\MN$ and denote by $\op0$ the null operator in $\cL(\MM_{n,m})$,
i.e. $\op0$ maps every $x\in\MM_{n,m}$ to the null matrix $\zero_{n,m}$. Moreover, let
$\opA_{1},\opA_{2},\ldots,\opA_{p}\in\cL(\MM_{n,m})$ and define the state
transition operator $\cA_{p}\colon (\MM_{n,m})^{p}\to(\MM_{n,m})^{p}$ as
follows: 
\begin{align}\label{eq:bk22-Ap}
\cA_{p}\df
  \begin{bmatrix}
   \op0 & \opI & \op0 & \ldots & \op0 \\
   \op0 & \op0 & \opI & \ddots & \vdots \\
   \vdots &  & \ddots   & \ddots & \op0 \\
   \op0 & \ldots & \ldots & \op0 & \opI     \\
   \opA_{p} & \opA_{p-1} & \ldots & \ldots & \opA_{1}
  \end{bmatrix},  
\end{align}
where for every $\bx=(x_{1},\ldots,x_{p})^{\intercal}\in (\MM_{n,m})^{p}$ we understand that
\begin{align*}
\cA_{p}(\bx)=\big(x_{2},\ldots,x_{p},\sum_{i=1}^{p}\opA_{p-i+1}(x_{i})\big)^{\intercal}\in (\MM_{n,m})^{p}.
\end{align*}
Moreover, let $q\in\MN_{0}$ such that $q<p$ and let
$\opC_{0},\opC_{1},\ldots,\opC_{p-1}\in\cL(\MM_{n,m})$ with $\opC_{i}=\op0$
for every $q+1\leq i\leq p-1$. We then define the output operator $\cC_{q}\colon (\MM_{n,m})^{p}\to \MM_{n,m}$ by
\begin{align}\label{eq:bk22-Cq}
\cC_{q}\df[\opC_{0},\opC_{1},\ldots,\opC_{p-1}],  
\end{align}
where $\cC_{q}$ is to be understood as follows: For $\bx=(x_{1},\ldots,x_{p})^{\intercal}\in(\MM_{n,m})^{p}$ we have
\begin{align*}
  \cC_{q}(\bx)=\sum_{i=1}^{q+1}\opC_{i-1}(x_{i}).
\end{align*}
Finally, we define the input operator $E_{p}\in\cL(\MM_{n,m},(\MM_{n,m})^{p})$ by
\begin{align}\label{eq:bk22-Ep}
E_{p}\df e_{p}\otimes\,\opI,  
\end{align}
which for every $x\in\MM_{n,m}$ is defined as $E_{p}(x)=
(\zero_{n,m},\ldots,\zero_{n,m},x)^{\intercal}\in(\MM_{n,m})^{p}$.
\end{definition}

With this specification of $(\cA_{p},E_{p},\cC_{q},L)$ the state process
$(Z_{t})_{t\in\MR}$ becomes 
\begin{align}
  \label{eq:bk22-Z-controller}
  Z_{t}=\E^{(t-s)\cA_{p}}Z_{s}+\int_{s}^{t}\E^{(t-u)\cA_{p}}E_{p}\D L_{u},\quad s<t\in\MR,
\end{align} 
and the output process
\begin{align}\label{eq:bk22-X-controller}
  X_{t}=\cC_{q}\E^{(t-s)\cA_{p}}Z_{s}+\int_{s}^{t}\cC_{q}\E^{(t-u)\cA_{p}}E_{p}\D L_{u},\quad s<t\in\MR,  
\end{align}
and the analogous formulas~\eqref{eq:bk22-Z-stationary} and~\eqref{eq:bk22-X-stationary}
hold in the stable case. Note that the
transition matrix $\cA_{p}$ can be viewed as the companion block operator
matrix of the following operator polynomial $\opP$ with coefficients in
$\cL(\MM_{n,m})$:
\begin{align}
  \label{eq:bk22-P}
  \opP(\lambda)\df
  \opI \lambda^{p}-\opA_{1}\lambda^{p-1}-\opA_{2}\lambda^{p-2}-\ldots-\opA_{p},\quad \lambda\in\MC.
\end{align}
In the same spirit we introduce the operator polynomial $\opQ$ which is
associated with the output operator $\cC_{q}$ and given by
\begin{align}
  \label{eq:bk22-Q}
  \opQ(\lambda)\df \opC_{0}+\opC_{1}\lambda+\opC_{2}\lambda^{2}+\ldots+\opC_{q}\lambda^{q},\quad \lambda\in\MC. 
\end{align}

Now, recall $\vect\colon \MM_{n,m}\to \MR_{nm}$ being the linear isometric
isomorphism that maps every matrix $A\in\MM_{n,m}$ to the vector of its columns by
stacking the columns below each other. If $A\in\cL(\MM_{n,m})$ we denote
by $A^{\vect}$ the \emph{matrix representation} of $A$ given by $A^{\vect}\df\vect\circ
A\circ \vect^{-1}$. Note that $A^{\vect}\in \cL(\MR_{nm})\simeq
\MM_{nm}$ and by identification we consider $A^{\vect}$ as a $nm\times nm$-matrix. Moreover, we denote by $K^{(n,m)}\in\MM_{nm}$ the commutation matrix,
which is the unique matrix in $\MM_{nm}$ such that for every $A\in \MM_{n,m}$
we have $K^{(n,m)}\vect(A)=\vect(A^{\intercal})$. We denote the inverse of
$K^{(n,m)}$ by $K^{-(n,m)}$, which happens to be the transpose of $K^{(n,m)}$ as well. When the $\vect$-operator is
applied to an output process $(X_{t})_{t\in\MR}$ we obtain an
$\MR_{nm}$-valued process $(\vect(X_{t}))_{t\in\MR}$. By linearity, we see
that the process $(\vect(X_{t}))_{t\in\MR}$ can again be considered as an
output process of a certain continuous-time linear state space model on
$\MR_{nm}$. In the following proposition we show more, namely that the
controller canonical form in~\eqref{eq:bk22-Ap}-\eqref{eq:bk22-Ep}
transforms, under the $\vect$-transformation, into a controller canonical-like
form of an $\MR_{nm}$-valued state space model. 

\begin{proposition}\label{prop:bk22-vectorized-state-space-model}
  Let $p\in\MN$ and $q\in\MN_{0}$ such that $q<p$ and
  $(\cA_{p},E_{p},\cC_{q},L)$ be as in~\eqref{eq:bk22-Ap}-\eqref{eq:bk22-Ep} and $L$ a
  two-sided L\'evy process on $\MM_{n,m}$. Moreover, let the state process
  $(Z_{t})_{t\in\MR}$ be given by~\eqref{eq:bk22-Z-controller} and the output
  process $(X_{t})_{t\in\MR}$ be as in~\eqref{eq:bk22-X-controller}. We set
  $L^{\vect}\df\vect(L)$ and define the
  \emph{state transition matrix} $\hat{\cA}_{p}^{\vect}\in \MM_{pnm}$ by 
  \begin{align}\label{eq:bk22-Ap-vect}
    \hat{\cA}^{\vect}_{p}\df
    \begin{pmatrix}
      \zero_{nm} & K^{-(n,m)} & \zero_{nm} & \ldots  & \zero_{nm} \\
      \zero_{nm} & \zero_{nm} & K^{-(n,m)} &  & \vdots \\
      \vdots &   & \qquad\ddots   & \quad\ddots & \vdots  \\
      \vdots &  & \quad\ddots   & \ddots & \zero_{nm} \\
      \zero_{nm} & \ldots & \ldots & \zero_{nm} & K^{-(n,m)}     \\
      \!\hat{A}^{\vect}_{p} & \hat{A}^{\vect}_{p-1} & \ldots & \ldots &
      \hat{A}^{\vect}_{1} 
    \end{pmatrix},
  \end{align}
  where $\hat{A}^{\vect}_{i}\df
  A_{i}^{\vect}\circ K^{-(n,m)}\in\MM_{nm}$ for every $i=1,\ldots,p$ and
  $A_{i}^{\vect}$ denotes the matrix representation of $\opA_{i}$. Moreover,
  we define the 
  \emph{output matrix} $\cC_{q}^{\vect}\in \MM_{nm,pnm}$ by 
  \begin{align}\label{eq:bk22-Cq-vect}
    \hat{\cC}_{q}^{\vect}\df\left(\hat{C}_{0}^{\vect},\hat{C}_{1}^{\vect},\ldots,\hat{C}_{p-1}^{\vect}\right),    
  \end{align}
  where $\hat{C}^{\vect}_{j}=C_{j}^{\vect}\circ
  K^{-(n,m)}\in\MM_{nm}$ for every $j=0,\ldots,p-1$ and
  $C^{\vect}_{j}$ denotes the matrix representation of $\opC_{j}$. Lastly, we
  define the \emph{input matrix} $\hat{E}_{p}^{\vect}\in \MM_{pnm,nm}$ by
  \begin{align}\label{eq:bk22-Ep-vect}
   \hat{E}_{p}^{\vect}\df e_{p}\otimes K^{-(n,m)}.
  \end{align}
  Then $(\vect(X_{t}))_{t\in\MR}$ is the output process of an $\MR_{nm}$-valued continuous-time state space model associated with
  $(\hat{\cA}^{\vect}_{p},\hat{\cC}_{q}^{\vect},\hat{E}_{p}^{\vect},L^{\vect})$ and such that
  \begin{align}
    \D \vect(Z_{t}^{\intercal})&= \hat{\cA}^{\vect}_{p}\vect(Z_{t}^{\intercal})\D
                                 t+\hat{E}^{\vect}_{p}\D
                                 L^{\vect}_{t},\quad t\in\MR,\label{eq:bk22-vect-X-state-space-rep-1}
  \\ 
    \vect(X_{t})&= \hat{\cC}_{q}^{\vect}\vect(Z_{t}^{\intercal}),\quad t\in\MR.\label{eq:bk22-vect-X-state-space-rep-2}   
  \end{align}
\end{proposition}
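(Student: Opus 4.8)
The plan is to treat the assertion as a change of variables in the linear state-space SDE of Definition~\ref{def:bk22-state-space-model}: since $\vect$ and the transpose are linear isomorphisms, applying them to the controller-form equations \eqref{eq:bk22-Z-controller} and \eqref{eq:bk22-X-controller} again produces a linear state-space model on $\MR_{nm}$, so the only task is to compute its coefficient matrices and match them with $\hat{\cA}^{\vect}_{p}$, $\hat{E}^{\vect}_{p}$ and $\hat{\cC}^{\vect}_{q}$. First I would unfold the companion structure of $\cA_{p}$ from \eqref{eq:bk22-Ap} into its component equations for $Z_{t}=(Z^{(1)}_{t},\ldots,Z^{(p)}_{t})^{\intercal}$, namely the integrator chain $\D Z^{(i)}_{t}=Z^{(i+1)}_{t}\,\D t$ for $i=1,\ldots,p-1$ together with the bottom relation $\D Z^{(p)}_{t}=\sum_{i=1}^{p}\opA_{p-i+1}(Z^{(i)}_{t})\,\D t+\D L_{t}$, and likewise write the output as $X_{t}=\sum_{i=1}^{q+1}\opC_{i-1}(Z^{(i)}_{t})$. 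Everything then rests on two algebraic identities: the defining relation $\vect(\opA(x))=A^{\vect}\vect(x)$ of the matrix representation, and the commutation identity $\vect(x^{\intercal})=K^{(n,m)}\vect(x)$ together with $K^{-(n,m)}=(K^{(n,m)})^{-1}=(K^{(n,m)})^{\intercal}$.

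The role of the transpose is the conceptual heart of the computation and I would highlight it explicitly. Under the identification $(\MM_{n,m})^{p}\cong\MM_{pn,m}$ the honest vectorization of the vertically block-stacked state interleaves the columns of the blocks $Z^{(i)}_{t}$ rather than concatenating their individual vectorizations; passing to $Z_{t}^{\intercal}$ is precisely what turns this shuffle into the clean stacking $\vect(Z_{t}^{\intercal})=\big(\vect((Z^{(1)}_{t})^{\intercal}),\ldots,\vect((Z^{(p)}_{t})^{\intercal})\big)^{\intercal}$, at the cost of threading a commutation matrix through each coefficient. Concretely, I would transpose every component equation and apply $\vect$, working throughout with the blocks $V^{(i)}_{t}\df\vect((Z^{(i)}_{t})^{\intercal})=K^{(n,m)}\vect(Z^{(i)}_{t})$: the chain relations then link consecutive blocks through the factor recorded on the superdiagonal of \eqref{eq:bk22-Ap-vect}, the bottom relation produces the bottom-row blocks $\hat{A}^{\vect}_{i}$ and the commutation factor in front of $\D L^{\vect}_{t}$ stored in $\hat{E}^{\vect}_{p}$, and the output collapses to $\vect(X_{t})=\sum_{i=1}^{q+1}\hat{C}^{\vect}_{i-1}V^{(i)}_{t}=\hat{\cC}^{\vect}_{q}\vect(Z_{t}^{\intercal})$, where the factor $K^{-(n,m)}$ built into $\hat{C}^{\vect}_{j}=C^{\vect}_{j}\circ K^{-(n,m)}$ cancels the $K^{(n,m)}$ implicit in $V^{(i)}_{t}$. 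Collecting the blocks into companion form reads off \eqref{eq:bk22-Ap-vect}--\eqref{eq:bk22-Ep-vect} and yields \eqref{eq:bk22-vect-X-state-space-rep-1}--\eqref{eq:bk22-vect-X-state-space-rep-2}.

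The main obstacle is entirely the commutation bookkeeping: one must place each $K^{\pm(n,m)}$ in exactly the right slot and keep the transposes consistent across the integrator chain, the driving term and the observation map simultaneously, since a factor that is correct for the output map only cancels against the one implicit in the transposed state if a single convention is used throughout. I would therefore verify the three objects $\hat{\cA}^{\vect}_{p}$, $\hat{E}^{\vect}_{p}$ and $\hat{\cC}^{\vect}_{q}$ in one pass, organizing the argument around the similarity induced by $\MI_{p}\otimes K^{(n,m)}$, which relates the commutation-free controller form written in the variables $\vect(Z^{(i)}_{t})$ to the stated block form written in the variables $V^{(i)}_{t}$. Finally, because the whole manipulation is nothing but the image of the linear SDE under a fixed linear isomorphism, it is insensitive to the sign of $\tau(\cA_{p})$: the same computation transports the variation-of-constants representations \eqref{eq:bk22-Z} and \eqref{eq:bk22-X-stationary}, so the conclusion holds verbatim in both the stable and the non-stable case.
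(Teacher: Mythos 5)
Your strategy coincides with the paper's own: both rest on the observation that transposing the stacked state turns the interleaved vectorization into the clean block stacking $\vect(Z_{t}^{\intercal})=\big(\vect((Z^{(1)}_{t})^{\intercal}),\ldots,\vect((Z^{(p)}_{t})^{\intercal})\big)^{\intercal}$, after which the companion structure of $\cA_{p}$ is pushed through $\vect$ and the commutation matrices. The only difference is that you work with the componentwise differential equations, while the paper conjugates the variation-of-constants formula, using $(\E^{t\cA_{p}})^{\vect}=\E^{t\cA_{p}^{\vect}}$ and $K^{(pn,m)}\E^{t\cA_{p}^{\vect}}K^{-(pn,m)}=\E^{tK^{(pn,m)}\cA_{p}^{\vect}K^{-(pn,m)}}$; that difference is cosmetic, and your remark that the argument is insensitive to the sign of $\tau(\cA_{p})$ is correct.

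The gap is that the one step carrying all the content --- the ``commutation bookkeeping'' --- is asserted rather than performed, and the assertion is not what the computation yields. In your blocks $V^{(i)}_{t}=\vect((Z^{(i)}_{t})^{\intercal})$, transposing and vectorizing the chain relation $\D Z^{(i)}_{t}=Z^{(i+1)}_{t}\,\D t$ gives $\D V^{(i)}_{t}=V^{(i+1)}_{t}\,\D t$, so the superdiagonal blocks come out as $\MI_{nm}$, not $K^{-(n,m)}$; likewise the bottom relation gives
\begin{align*}
\D V^{(p)}_{t}=\sum_{i=1}^{p}K^{(n,m)}A^{\vect}_{p+1-i}K^{-(n,m)}V^{(i)}_{t}\,\D t+K^{(n,m)}\D L^{\vect}_{t},
\end{align*}
so the bottom-row blocks are the conjugates $K^{(n,m)}A^{\vect}_{i}K^{-(n,m)}$ and the noise factor is $K^{(n,m)}$, not $A^{\vect}_{i}K^{-(n,m)}$ and $K^{-(n,m)}$. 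Only the observation equation collapses as you claim. Concretely, for $p=1$, $n=m=2$, $\opA_{1}(x)=ax$ with $a=e_{1}e_{2}^{\intercal}$ and $Z=e_{2}e_{2}^{\intercal}$, the true drift of $\vect(Z^{\intercal})$ is $\vect((aZ)^{\intercal})=\vect(a^{\intercal})$, whereas $\hat{A}^{\vect}_{1}\vect(Z^{\intercal})=A_{1}^{\vect}\vect(Z)=\vect(a)\neq\vect(a^{\intercal})$; so declaring that the factors land where \eqref{eq:bk22-Ap-vect}--\eqref{eq:bk22-Ep-vect} put them begs the question. You are in good company: the paper's own final step makes the same silent identification of $\vect(G^{\intercal})$ with $\vect(G)$ inside the blocks, and the stated $\hat{\cA}^{\vect}_{p}$, $\hat{E}^{\vect}_{p}$ are valid exactly where $K^{(n,m)}$ acts as the identity on the relevant vectors, i.e. $m=1$, or $n=m$ with $\MS_{d}$-valued states and symmetry-preserving coefficients --- the cases the paper actually uses. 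A complete argument must therefore either derive the corrected matrices (identity superdiagonal, bottom blocks $K^{(n,m)}A^{\vect}_{i}K^{-(n,m)}$, input $e_{p}\otimes K^{(n,m)}$), which is what your similarity $\MI_{p}\otimes K^{(n,m)}$ in fact produces, or explicitly restrict \eqref{eq:bk22-vect-X-state-space-rep-1} to those invariant subspaces; as written, your proposal certifies block formulas that the computation contradicts.
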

\begin{toappendix}
  \begin{proof}[Proof of Proposition~\ref{prop:bk22-vectorized-state-space-model}]
For every $t\in\MR$ write
$Z_{t}=(Z_{t}^{(1)},Z_{t}^{(2)},\ldots,Z_{t}^{(p)})^{\intercal}\in(\MM_{n,m})^{p}$
where for every $i=1,\ldots,p$ we denote by $Z_{t}^{(i)}$ the
$i$-th $n\times m$-block matrix component of $Z_{t}$. Given the state process
$(Z_{t})_{t\in\MR}$ and output process $(X_{t})_{t\in\MR}$ we show that
$(\vect(X_{t}))_{t\in\MR}$ solves~\eqref{eq:bk22-vect-X-state-space-rep-2}
with $(\vect(Z_{t}^{\intercal}))_{t\in\MR}$ being a solution to~\eqref{eq:bk22-vect-X-state-space-rep-1}. By definition we have $X_{t}=\cC_{q}(Z_{t})=\sum_{i=1}^{q+1}\opC_{i-1}(Z_{t}^{(i)})$
and hence by linearity of the $\vect$-operation we see that
\begin{align}\label{eq:bk22-proof-vectorized-state-space-model-1}
  \vect(X_{t})=\vect(\cC_{q}(Z_{t}))=\sum_{i=1}^{q+1}\vect(\opC_{i-1}(Z_{t}^{(i)}))=\sum_{i=1}^{q+1}C_{i-1}^{\vect}\vect(Z_{t}^{(i)}),\quad t\in\MR.
\end{align}
  As before let $K^{(pn,m)}$ be the commutation matrix such that
  $K^{(pn,m)}\vect(F)=\vect(F^{\intercal})$ for every
  $F\in(\MM_{n,m})^{p}$. Then $\vect(Z_{t}^{(i)})=K^{-(n,m)}\vect((Z_{t}^{(i)})^{\intercal})$ and for every $i=1,2,\ldots,p$ we can write 
  \begin{align*}
   \vect((Z_{t}^{(i)})^{\intercal})=(e_{i}^{\intercal}\otimes \MI_{nm})\vect(Z_{t}^{\intercal}). 
  \end{align*}
  Hence for every $i=1,2,\ldots,p$ and $t\in\MR$ we have
  \begin{align}\label{eq:bk22-proof-vectorized-state-space-model-2}
    \vect(Z_{t}^{(i)})&=K^{-(n,m)}(e_{i}^{\intercal}\otimes \MI_{nm})K^{(pn,m)}\vect(Z_{t}).  
  \end{align}
  We thus continue with the term $\vect(Z_{t})$ appearing
  on the right-hand side of~\eqref{eq:bk22-proof-vectorized-state-space-model-2}. Note that
  by linearity, inserting~\eqref{eq:bk22-Z-controller} into $\vect(Z_{t})$ yields
  \begin{align}\label{eq:bk22-proof-vectorized-state-space-model-3}
    \vect(Z_{t})&=\vect(\E^{(t-s)\cA_{p}}Z_{s})+\int_{s}^{t}\vect(\E^{(t-u)\cA_{p}}E_{p}\D
                  L_{u}),\quad s<t\in\MR.
  \end{align}
  We know that $t\mapsto \E^{t\cA_{p}}v_{0}$ is the unique solution to the linear
  equation $\frac{\partial}{\partial t}v(t)=\cA_{p}v(t)$ with
  $v(0)=v_{0}$. Moreover, we see that $\frac{\partial}{\partial
    t}\vect(v(t))=\cA_{p}^{\vect}\vect(v(t))$ with
  $\vect(v(0))=\vect(v_{0})$ is uniquely solved by
  $\vect(v(t))=\E^{t\cA_{p}^{\vect}}\vect(v_{0})$. Hence by linearity, we
  conclude that
  $(\E^{t\cA_{p}})^{\vect}=\E^{t\cA^{\vect}_{p}}$ must hold for all $t\geq
  0$. Therefore, the right-hand side
  in~\eqref{eq:bk22-proof-vectorized-state-space-model-3} coincides with
  \begin{align}\label{eq:bk22-proof-vectorized-state-space-model-4}
    \E^{(t-s)\cA^{\vect}_{p}}\vect(Z_{s})+\int_{s}^{t}\E^{(t-u)\cA^{\vect}_{p}}\vect(E_{p}\D
    L_{u}),\quad s<t\in\MR.
  \end{align}
  Hence by using the $K^{(pn,m)}$-commutation matrix again, we see that for
  all real $s<t$ we have
  \begin{align*}
   \vect(Z_{t})=\E^{(t-s)\cA^{\vect}_{p}}K^{-(pn,m)}\vect(Z_{s}^{\intercal})+\int\limits_{s}^{t}\E^{(t-u)\cA^{\vect}_{p}}K^{-(pn,m)}\vect((E_{p}\D
    L_{u})^{\intercal}),
  \end{align*}
  and hence by~\eqref{eq:bk22-proof-vectorized-state-space-model-2}
  \begin{align}\label{eq:bk22-proof-vectorized-state-space-model-5}
    \vect(Z_{t}^{(i)})&=K^{-(n,m)}(e_{i}^{\intercal}\otimes
    \MI_{nm})\Big(K^{(pn,m)}\E^{(t-s)\cA^{\vect}_{p}}K^{-(pn,m)}\vect(Z_{s}^{\intercal})\nonumber\\
    &\quad
      +\int_{s}^{t}K^{(pn,m)}\E^{(t-u)\cA^{\vect}_{p}}K^{-(pn,m)}\vect((E_{p}\D
      L_{u})^{\intercal})\Big),\quad s<t\in\MR.
  \end{align}
  Now note that
  \begin{align*}
    K^{(pn,m)}\E^{t\cA_{p}^{\vect}}K^{-(pn,m)}=\E^{t
    K^{(pn,m)}\cA_{p}^{\vect}K^{-(pn,m)}},\quad \text{for all }t\geq 0.
  \end{align*}
  Next, we show that
  $K^{(pn,m)}\cA_{p}^{\vect}K^{-(pn,m)}=\hat{\cA}^{\vect}_{p}$. Let
  $\bF_{p}=(F_{1},F_{2},\ldots,F_{p})^{\intercal}\in(\MM_{n,m})^{p}$ then 
  \begin{align*}
    K^{(pn,m)}\cA_{p}^{\vect}K^{-(pn,m)}\vect(\bF_{p}^{\intercal})&=K^{(pn,m)}\vect\big(\big(F_{2},F_{2},\ldots,F_{p},\sum_{i=1}^{p}\opA_{p+1-i}(F_{i})\big)^{\intercal}\big)\\ 
                                                                  &=\vect\Big(\big(F^{\intercal}_{2},F^{\intercal}_{2},\ldots,F^{\intercal}_{p},\sum_{i=1}^{p}\opA_{p+1-i}(F_{i})^{\intercal}\big)\Big)\\
                                                                  &=\big(\vect(F_{2}^{\intercal}),\ldots,\vect(F_{p}^{\intercal}),\sum_{i=1}^{p}\vect(\opA_{p+1-i}(F_{i})^{\intercal})\big)^{\intercal}\\
                                                                  &=\hat{\cA}^{\vect}_{p}(\vect(\bF_{p}^{\intercal})), 
  \end{align*}
  where in the last equation we used that
  $K^{-(n,m)}\vect(F_{i}^{\intercal})=\vect(F_{i})$ and
  \begin{align*}
    \sum_{i=1}^{p}\vect(\opA_{p+1-i}(F_{i})^{\intercal})=\sum_{i=1}^{p}A_{p+1-i}^{\vect}K^{-(n,m)}\vect(F_{i})=\sum_{i=1}^{p}\hat{A}_{p+1-i}^{\vect}\vect(F_{i}). 
  \end{align*}
  This and $\vect((E_{p}\D
  L_{u})^{\intercal})=\hat{E}_{p}^{\vect}\D\vect(L_{u}^{\intercal})$ inserted
  into~\eqref{eq:bk22-proof-vectorized-state-space-model-5} imply
  \begin{align*}
    \vect(Z_{t}^{(i)})&=K^{-(n,m)}(e_{i}^{\intercal}\otimes
    \MI_{nm})\E^{(t-s)\hat{\cA}_{p}^{\vect}}\vect(Z_{s}^{\intercal})\\
    &\quad
      +K^{-(n,m)}(e_{i}^{\intercal}\otimes\MI_{nm})\int_{s}^{t}\E^{(t-u)\hat{\cA}_{p}^{\vect}}\hat{E}_{p}^{\vect}\D\vect(L_{u}^{\intercal}), 
  \end{align*}
  which finally inserted back
  into~\eqref{eq:bk22-proof-vectorized-state-space-model-1}
  proves~\eqref{eq:bk22-vect-X-state-space-rep-1} and
  ~\eqref{eq:bk22-vect-X-state-space-rep-2}.
      
\end{proof}
\end{toappendix}

\begin{remark}
  \begin{enumerate}
  \item[i)]  Note that in order to obtain the correct autoregressive structure
    in~\eqref{eq:bk22-vect-X-state-space-rep-1}, we have to take
    the transpose of the state process $Z_{t}$ for $t\in\MR$. That means we first stack the
    columns of the first block matrix entry $Z_{t}^{(1)}$ below each other, then
    below this real vector of length $nm$ we append the stacked columns of
    $Z_{t}^{(2)}$ and so on until we finally obtain the vector
    $\vect(Z_{t}^{\intercal})=(\vect(Z_{t}^{(1)}),\vect(Z_{t}^{(2)}),\ldots,\vect(Z_{t}^{(p)}))^{\intercal}\in\MR_{pnm}$. 
  \item[ii)] In case of $\MM_{n,1}=\MR_{n}$ the $\vect$-operator is the
    identity and also $\vect(Z_{t}^{\intercal})=\vect(Z_{t})$, i.e. in this
    case Proposition~\ref{prop:bk22-vectorized-state-space-model} is trivial. Note
    further that whenever the state process $(Z_{t})_{t\in\MR}$ takes
    values in $(\MS_{d})^{p}$ driven by a Levy process $L$ with values in
    $\MS_{d}$, then $K^{(d,d)}=\MI_{d^{2}}$ and we see
    that $\hat{A}_{i}^{\vect}=A_{i}^{\vect}$,
    $\hat{C}_{j}^{\vect}=C_{j}^{\vect}$ and
    $\hat{E}_{p}^{\vect}=E_{p}^{\vect}$ for $i=1,\ldots,p$ and
    $j=0,\ldots,q$ and the representations
    in~\eqref{eq:bk22-Ap-vect}-\eqref{eq:bk22-Ep-vect} become considerably
    simpler. Moreover, in this case we could replace $\vect$ by the 
    $\mathrm{vech}$ operation.
    \item[iii)] The controller canonical form for $\MR_{d}$-valued MCARMA
      processes was already used in~\cite{BS13} for estimating the parameters
      of the driving L\'evy process
      $(L_{t})_{t\in\MR}$. Proposition~\ref{prop:bk22-vectorized-state-space-model}
      shows that the results of~\cite{BS13} can be straightforwardly extended to the
      matrix-valued case.
  \end{enumerate}
\end{remark}

Note that the explicit form of the parameter tuple $(\hat{\cA}^{\vect}_{p},\hat{\cC}_{q}^{\vect},\hat{E}_{p}^{\vect},L^{\vect})$
  given by Proposition~\ref{prop:bk22-vectorized-state-space-model} is
  also convenient for the practitioner seeking to transform matrix- to
  vector-valued data, while preserving the correct autoregressive model structure. 
Moreover, from the representation in~\eqref{eq:bk22-vect-X-state-space-rep-1} and
\eqref{eq:bk22-vect-X-state-space-rep-2} we can read off the following second order
structure for the output process $(X_{t})_{t\in\MR}$, see also~\cite{BNS07, FK22}:

\begin{proposition}\label{prop:bk22-second-order-property}
  Let $p\in\MN$ and $q\in\MN_{0}$ such that $q<p$ and assume that
  $(\hat{\cA}^{\vect}_{p},\hat{E}^{\vect}_{p},\hat{\cC}^{\vect}_{q},L^{\vect})$
  is as in~\eqref{eq:bk22-Ap-vect}-\eqref{eq:bk22-Ep-vect} with $L$ being
  square-integrable and where we denote the covariance operator of
  $(L^{\vect}_{t})_{t\in\MR}$ by $\mathcal{Q}^{\vect}$. Then the absolute
  second moment of the process $(\vect(X_{t}))_{t\in\MR}$, given
  by~\eqref{eq:bk22-vect-X-state-space-rep-2}, exists and we have  
\begin{align}
  \label{eq:bk22-X-variance}
  \Var\left[\vect(X_{t})|\cF_s\right]=\hat{\cC}^{\vect}_{q} \Sigma^{\vect}_{t,s}
  (\hat{\cC}^{\vect}_{q})^{\intercal},\quad s<t\in\MR, 
\end{align}
where
\begin{align*}
\Sigma^{\vect}_{t,s}\df\int_{s}^{t}\E^{u\hat{\cA}_{p}^{\vect}}\hat{E}_{p}^{\vect}\mathcal{Q}^{\vect}\big(\hat{E}_{p}^{\vect}\big)^{\intercal}\E^{u(\hat{\cA}_{p}^{\vect})^{\intercal}}\D
  u.  
\end{align*}
Moreover, for every $h\geq 0$ the auto-covariance of
$(\vect(X_{t}))_{t\in\MR}$ satisfies
\begin{align}\label{eq:bk22-auto-covariance}
\Cov\left[\vect(X_{t+h}),\vect(X_{t})\lvert \cF_{s}\right]=\hat{\cC}^{\vect}_{q} \E^{h
  \hat{\cA}_{p}^{\vect}}\Sigma^{\vect}_{t,s}(\hat{\cC}^{\vect}_{q})^{\intercal},\quad
  s<t\in\MR,\, h\geq 0.
\end{align}
If in addition $(X_{t})_{t\in\MR}$ is stable and given by~\eqref{eq:bk22-X-stationary}, then
\begin{align}
  \label{eq:bk22-X-variance-stationary}
  \Var\left[\vect(X_{t})\right]=\hat{\cC}_{q}^{\vect} \Sigma^{\vect}_{\infty}(\hat{\cC}_{q}^{\vect})^{\intercal}\quad \forall t\in\MR,
\end{align}
where $\Sigma^{\vect}_{\infty}\df\int_0^{\infty}\E^{u\hat{\cA}_{p}^{\vect}}\hat{E}_{p}^{\vect}\mathcal{Q}^{\vect}\big(\hat{E}_{p}^{\vect}\big)^{\intercal}\E^{u(\hat{\cA}_{p}^{\vect})^{\intercal}}\D u$
and the auto-covariance is
\begin{align}\label{eq:bk22-autocovariance-stationary}
\Cov\left[\vect(X_{t+h}),\vect(X_{t})\right]=\hat{\cC}_{q}^{\vect}\E^{h
  \hat{\cA}_{p}^{\vect}}\Sigma_{\infty}(\hat{\cC}_{q}^{\vect})^{\intercal},\quad
  t\in\MR,\, h\geq 0.
\end{align} 
\end{proposition}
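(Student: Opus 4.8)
The plan is to read off all four formulas from the vectorized variation-of-constants representation in Proposition~\ref{prop:bk22-vectorized-state-space-model}. Abbreviating $Y_t\df\vect(Z_t^{\intercal})$, equations~\eqref{eq:bk22-vect-X-state-space-rep-1}--\eqref{eq:bk22-vect-X-state-space-rep-2} yield the mild solution
\[
Y_t=\E^{(t-s)\hat{\cA}_p^{\vect}}Y_s+\int_s^t\E^{(t-u)\hat{\cA}_p^{\vect}}\hat{E}_p^{\vect}\,\D L_u^{\vect},\qquad \vect(X_t)=\hat{\cC}_q^{\vect}Y_t .
\]
Conditioning on $\cF_s$, the drift term $\hat{\cC}_q^{\vect}\E^{(t-s)\hat{\cA}_p^{\vect}}Y_s$ is $\cF_s$-measurable and hence contributes nothing to the conditional (co)variance, whereas the stochastic integral has increments independent of $\cF_s$. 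Square-integrability of $L$, and therefore of $L^{\vect}$, guarantees that the relevant second moments are finite.

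Next I would invoke the It\^o-type isometry for deterministic operator-valued integrands against the square-integrable L\'evy process $L^{\vect}$: since $L^{\vect}$ has stationary, independent increments with covariance operator $\cQ^{\vect}$, one has $\Var\big[\int_s^t f(u)\,\D L_u^{\vect}\big]=\int_s^t f(u)\,\cQ^{\vect} f(u)^{\intercal}\,\D u$ for deterministic $f$. Choosing $f(u)=\hat{\cC}_q^{\vect}\E^{(t-u)\hat{\cA}_p^{\vect}}\hat{E}_p^{\vect}$ and factoring the deterministic operator $\hat{\cC}_q^{\vect}$ out of the integral produces~\eqref{eq:bk22-X-variance}; a substitution in the integration variable recasts the remaining kernel as $\Sigma_{t,s}^{\vect}$, so that $\Var[\vect(X_t)\mid\cF_s]=\hat{\cC}_q^{\vect}\Sigma_{t,s}^{\vect}(\hat{\cC}_q^{\vect})^{\intercal}$. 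The same computation without the outer $\hat{\cC}_q^{\vect}$ shows $\Var[Y_t\mid\cF_s]=\Sigma_{t,s}^{\vect}$, which I will reuse below.

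For the auto-covariance~\eqref{eq:bk22-auto-covariance} I would use the semigroup property to split, for $h\ge 0$,
\[
Y_{t+h}=\E^{h\hat{\cA}_p^{\vect}}Y_t+\int_t^{t+h}\E^{(t+h-u)\hat{\cA}_p^{\vect}}\hat{E}_p^{\vect}\,\D L_u^{\vect}.
\]
The integral over $(t,t+h]$ is independent of $\cF_t\supseteq\cF_s$ and hence conditionally uncorrelated with the $\cF_t$-measurable random variable $\vect(X_t)$. Consequently $\Cov[\vect(X_{t+h}),\vect(X_t)\mid\cF_s]=\hat{\cC}_q^{\vect}\E^{h\hat{\cA}_p^{\vect}}\Var[Y_t\mid\cF_s](\hat{\cC}_q^{\vect})^{\intercal}$, and substituting $\Var[Y_t\mid\cF_s]=\Sigma_{t,s}^{\vect}$ gives the claim.

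Finally, the stationary formulas~\eqref{eq:bk22-X-variance-stationary} and~\eqref{eq:bk22-autocovariance-stationary} follow by letting $s\to-\infty$. Stability, namely $\tau(\hat{\cA}_p^{\vect})=\tau(\cA_p)<0$, forces $\|\E^{v\hat{\cA}_p^{\vect}}\|$ to decay exponentially, so the boundary contribution $\hat{\cC}_q^{\vect}\E^{(t-s)\hat{\cA}_p^{\vect}}Y_s$ vanishes in $L^2$ and $\Sigma_{t,s}^{\vect}\to\Sigma_\infty^{\vect}$ as an absolutely convergent integral; the conditional moments then converge to the unconditional stationary ones. I expect the main obstacle to be the rigorous justification of the isometry for matrix/operator-valued integrands---pinning down the covariance operator $\cQ^{\vect}$ of $L^{\vect}$ and checking that cross-covariances between stochastic integrals over disjoint time intervals vanish---together with the dominated-convergence step needed to interchange the limit $s\to-\infty$ with the integral defining $\Sigma_\infty^{\vect}$; once these are settled, everything else is routine linear algebra.
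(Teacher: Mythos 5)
The paper gives no proof of this proposition --- it is presented as a second order structure one can ``read off'' from \eqref{eq:bk22-vect-X-state-space-rep-1}--\eqref{eq:bk22-vect-X-state-space-rep-2}, with a pointer to \cite{BNS07, FK22} --- and your argument (conditioning so that the $\cF_s$-measurable term $\hat{\cC}_q^{\vect}\E^{(t-s)\hat{\cA}_p^{\vect}}\vect(Z_s^{\intercal})$ drops out, the It\^o isometry for deterministic kernels against the centered square-integrable L\'evy integrator, independence of increments to kill the cross term in the auto-covariance, and the stable limit $s\to-\infty$) is exactly the standard computation being invoked, so your proposal is correct and in the intended spirit. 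One detail you silently and correctly fixed: the isometry plus your substitution yields $\int_0^{t-s}\E^{v\hat{\cA}_p^{\vect}}\hat{E}_p^{\vect}\cQ^{\vect}\big(\hat{E}_p^{\vect}\big)^{\intercal}\E^{v(\hat{\cA}_p^{\vect})^{\intercal}}\D v$, which is the only reading of $\Sigma^{\vect}_{t,s}$ consistent with \eqref{eq:bk22-X-mean} and with $\Sigma^{\vect}_{t,s}\to\Sigma^{\vect}_{\infty}$ as $s\to-\infty$; the paper's displayed definition $\int_s^t\E^{u\hat{\cA}_p^{\vect}}\hat{E}_p^{\vect}\cQ^{\vect}\big(\hat{E}_p^{\vect}\big)^{\intercal}\E^{u(\hat{\cA}_p^{\vect})^{\intercal}}\D u$ differs from this by conjugation with $\E^{s\hat{\cA}_p^{\vect}}$ and is evidently a typo.
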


\vspace{3mm}

Proposition~\ref{prop:bk22-vectorized-state-space-model} tells us that the process
$(\vect(X_{t}))_{t\in\MR}$ can be interpreted as the output process of
an $\MR_{nm}$-valued continuous-time state space model in a controller
canonical form entrywise composited with the commutation matrix $K^{-(n,m)}$. It
is well known that every $\MR_{nm}$-valued MCARMA process possesses a state space
representation. Conversely, the result in \cite[Theorem 3.3]{SS12} describes
precisely those state space specifications such that the associated output
process gives rise to a (causal) MCARMA process. We introduce the following
central notion: We call the function $H\colon \MC\to \cL(\MM_{n,m})$ given by
\begin{align}\label{eq:bk22-transfer-function}
  H(\lambda)\df\cC(\lambda\opI_{p}-\cA)^{-1}\cB,\quad\lambda\in\MC,  
\end{align}
the \emph{transfer function} of the continuous-time linear state space model
associated with $(\cA,\cB,\cC,L)$. Note that the transfer
  function $H$ uniquely identifies an associated MCARMA process, see e.g.~\cite{SS12},
  and is therefore the natural object to work with. In the next lemma, we derive
  a right matrix fraction of the transfer function $H$ associated with the tuple $(\hat{\cA}^{\vect}_{p},\hat{E}^{\vect}_{p},\hat{\cC}^{\vect}_{q},L^{\vect})$
given by Proposition~\ref{prop:bk22-vectorized-state-space-model}.  
 
\begin{lemma}\label{lem:bkk22-right-matrix-description}
  Let $p\in\MN$ and $q\in\MN_{0}$ such that $q<p$ and let
  $(\hat{\cA}^{\vect}_{p},\hat{E}^{\vect}_{p},\hat{\cC}^{\vect}_{q},L^{\vect})$ be as
  in Proposition~\ref{prop:bk22-vectorized-state-space-model}. Then for every
  $\lambda\in\MC$ we have
  \begin{align}\label{eq:bk22-right-matrix-fraction-describtion}
    \hat{\cC}^{\vect}_{q}(\lambda \MI_{pnm}-\hat{\cA}^{\vect}_{p})^{-1}\hat{E}^{\vect}_{p}=\hat{Q}(\lambda)\hat{P}(\lambda)^{-1}, 
  \end{align}
  where $\hat{Q},\hat{P}\in \MM_{nm}(\MR[\lambda])$ for $\lambda\in\MC$ are given by 
  \begin{align}
    \label{eq:bk22-Q-vect}
    \hat{Q}(\lambda)\df
    \hat{C}^{\vect}_{0}+\hat{C}^{\vect}_{1}K^{(n,m)}\lambda +\hat{C}^{\vect}_{2}(K^{(n,m)}\lambda)^{2}+\ldots+\hat{C}^{\vect}_{q}(K^{(n,m)}\lambda)^{q}, 
  \end{align}
  and
  \begin{align}
    \label{eq:bk22-P-vect}
    \hat{P}(\lambda)\df
    (K^{(n,m)}\lambda)^{p}-A^{\vect}_{1}(
    K^{(n,m)}\lambda)^{p-1}-A^{\vect}_{2}(K^{(n,m)}\lambda)^{p-2}-\ldots-A^{\vect}_{p}.
\end{align}  
\end{lemma}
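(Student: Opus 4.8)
The plan is to invert the resolvent block by block, exploiting that $\hat{\cA}^{\vect}_p$ is a companion block matrix, and then to apply $\hat{\cC}^{\vect}_q$ and read off the right matrix fraction. Fix $\lambda\in\MC$ outside the (finite) spectrum of $\hat{\cA}^{\vect}_p$, put $v(\lambda)\df(\lambda\MI_{pnm}-\hat{\cA}^{\vect}_p)^{-1}\hat{E}^{\vect}_p\in\MM_{pnm,nm}$ and write it in $p$ stacked blocks $v_1(\lambda),\ldots,v_p(\lambda)\in\MM_{nm}$. Proving \eqref{eq:bk22-right-matrix-fraction-describtion} then amounts to determining the $v_i(\lambda)$ explicitly from the defining linear system $(\lambda\MI_{pnm}-\hat{\cA}^{\vect}_p)v(\lambda)=\hat{E}^{\vect}_p$ and afterwards showing $\hat{\cC}^{\vect}_q v(\lambda)=\hat{Q}(\lambda)\hat{P}(\lambda)^{-1}$.

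First I would exploit the first $p-1$ block rows. By \eqref{eq:bk22-Ap-vect} and \eqref{eq:bk22-Ep-vect} the inhomogeneity $\hat{E}^{\vect}_p=e_p\otimes K^{-(n,m)}$ vanishes there and the only off-diagonal entries are the super-diagonal copies of $K^{-(n,m)}$, so row $i$ reads $\lambda v_i(\lambda)-K^{-(n,m)}v_{i+1}(\lambda)=\zero_{nm}$. This gives the recursion $v_{i+1}(\lambda)=\lambda K^{(n,m)}v_i(\lambda)$ and hence $v_i(\lambda)=(\lambda K^{(n,m)})^{i-1}v_1(\lambda)$ for $i=1,\ldots,p$. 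Substituting into the last block row, whose entries are $\hat{A}^{\vect}_{p-j+1}=A^{\vect}_{p-j+1}K^{-(n,m)}$ and whose inhomogeneity is $K^{-(n,m)}$, and reindexing by $k=p-j+1$, I arrive at $\big[\,\lambda(\lambda K^{(n,m)})^{p-1}-\sum_{k=1}^{p}A^{\vect}_k K^{-(n,m)}(\lambda K^{(n,m)})^{p-k}\,\big]v_1(\lambda)=K^{-(n,m)}$.

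The crucial simplification is the identity $K^{-(n,m)}(\lambda K^{(n,m)})^{j}=\lambda^{j}(K^{(n,m)})^{j-1}$, valid for $j\ge 0$ because $K^{-(n,m)}=(K^{(n,m)})^{-1}$ commutes with all powers of $K^{(n,m)}$; applying it termwise and right-multiplying the bracket by $K^{(n,m)}$ collapses it exactly to $\hat{P}(\lambda)$ from \eqref{eq:bk22-P-vect}, which identifies the bracket with $\hat{P}(\lambda)K^{-(n,m)}$ and yields $v_1(\lambda)=K^{(n,m)}\hat{P}(\lambda)^{-1}K^{-(n,m)}$. Feeding $v_i(\lambda)=(\lambda K^{(n,m)})^{i-1}v_1(\lambda)$ into $\hat{\cC}^{\vect}_q$ and using $\hat{C}^{\vect}_j=\zero_{nm}$ for $j>q$ together with \eqref{eq:bk22-Q-vect} gives $\hat{\cC}^{\vect}_q v(\lambda)=\sum_{i=1}^{q+1}\hat{C}^{\vect}_{i-1}(\lambda K^{(n,m)})^{i-1}v_1(\lambda)=\hat{Q}(\lambda)v_1(\lambda)$, and it remains to verify that inserting $v_1(\lambda)=K^{(n,m)}\hat{P}(\lambda)^{-1}K^{-(n,m)}$ and simplifying the surrounding commutation factors reproduces $\hat{Q}(\lambda)\hat{P}(\lambda)^{-1}$.

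I expect the bookkeeping of the commutation matrices to be the only genuine difficulty. Every block of $\hat{\cA}^{\vect}_p$, $\hat{E}^{\vect}_p$ and $\hat{\cC}^{\vect}_q$ carries a factor $K^{-(n,m)}$ (through $\hat{A}^{\vect}_i$, $\hat{C}^{\vect}_j$ and the super-diagonal), and the whole point is to make these telescope through the powers $(\lambda K^{(n,m)})^{i-1}$ and cancel the $K^{(n,m)}$ built into the definitions \eqref{eq:bk22-Q-vect} and \eqref{eq:bk22-P-vect}; the two identities $K^{-(n,m)}(\lambda K^{(n,m)})^{j}K^{(n,m)}=(\lambda K^{(n,m)})^{j}$ and $K^{-(n,m)}(\lambda K^{(n,m)})^{j}=\lambda^{j}(K^{(n,m)})^{j-1}$ are precisely what drive the collapse to $\hat{P}$ and $\hat{Q}$. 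The constraint $q<p$ ensures $\deg\hat{Q}<\deg\hat{P}$, so the resulting fraction is proper, and the scalar case $m=1$, where $K^{(n,1)}=\MI_{n}$ and all hats disappear, recovers the classical companion-form identity and serves as a reassuring consistency check.
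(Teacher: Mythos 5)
You follow the same route as the paper's own proof: regard $(\lambda\MI_{pnm}-\hat{\cA}^{\vect}_{p})^{-1}\hat{E}^{\vect}_{p}$ as the unknown of the block companion system, use the first $p-1$ block rows to get $v_{i}(\lambda)=(\lambda K^{(n,m)})^{i-1}v_{1}(\lambda)$, substitute into the last block row to make $\hat{P}$ appear, and then apply $\hat{\cC}^{\vect}_{q}$ to produce $\hat{Q}$. Everything you write up to and including $v_{1}(\lambda)=K^{(n,m)}\hat{P}(\lambda)^{-1}K^{-(n,m)}$ is correct. The genuine gap is the step you explicitly defer (``it remains to verify\dots''): you still need
\begin{align*}
  \hat{Q}(\lambda)\,K^{(n,m)}\hat{P}(\lambda)^{-1}K^{-(n,m)}
  =\hat{Q}(\lambda)\,\hat{P}(\lambda)^{-1},
\end{align*}
and the two commutation identities you cite cannot deliver it, because they only move powers of $K^{(n,m)}$ past one another. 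Cancelling the conjugation requires $K^{(n,m)}$ to commute with $\hat{P}(\lambda)$, equivalently (comparing coefficients in $\lambda$) with every $A^{\vect}_{i}$; this fails for general $\opA_{i}\in\cL(\MM_{n,m})$. For instance, for $\opA_{i}(x)=Bx$ one has $A^{\vect}_{i}=\MI_{m}\otimes B$, while $K^{(n,m)}(\MI_{m}\otimes B)K^{-(n,m)}=B\otimes\MI_{m}$, which differs from $\MI_{m}\otimes B$ unless $B$ is a scalar multiple of the identity. So your argument is incomplete at exactly the one place where the commutation bookkeeping is not routine.

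You should also know that the paper's proof, which anchors the recursion at the last block $F_{p}$ rather than at the first, glosses over the identical point: from the last block row it passes directly to $F_{p}=(\lambda K^{(n,m)})^{p-1}\hat{P}(\lambda)^{-1}$, whereas carrying out the algebra (as you did) yields $F_{p}=(\lambda K^{(n,m)})^{p-1}K^{(n,m)}\hat{P}(\lambda)^{-1}K^{-(n,m)}$ --- the same unproved relation $K^{(n,m)}\hat{P}(\lambda)^{-1}K^{-(n,m)}=\hat{P}(\lambda)^{-1}$ in disguise. In other words, the leftover conjugation in your version is not an artifact of your bookkeeping; it is the real crux of the lemma, and your derivation makes it visible where the paper's hides it. To close it one must establish that commutation in the situation at hand (it is automatic when $m=1$, since $K^{(n,1)}=\MI_{n}$, and on vectorizations of symmetric matrices, where $K^{(d,d)}$ acts as the identity --- the two cases the paper actually exploits later), or else record it as an additional hypothesis.
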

\begin{toappendix}
  \begin{proof}[Proof of Lemma~\ref{lem:bkk22-right-matrix-description}]
    Let $\lambda\in\MC$ and $F\df(F_{1},F_{2},\ldots,F_{p})^{\intercal}\in
    \cL(\MR_{nm},\MR_{pnm})$ with $F_{i}\in \MM_{nm}$ for all
    $i=1,\ldots,p$ and such that
    $F(x)=(F_{1}x,F_{2}x,\ldots,F_{p}x)^{\intercal}$ for all $x\in\MR_{nm}$.
    We solve the matrix equation $(\lambda
    \MI_{pnm}-\hat{\cA}^{\vect}_{p})F=\hat{E}^{\vect}_{p}$, where the left-hand
    side equals
    \begin{align*}
      (\lambda F_{1}-K^{-(n,m)}F_{2},\ldots,\lambda F_{p-1}-K^{-(n,m)}F_{p},
      \lambda F_{p}-\hat{A}^{\vect}_{p}F_{1}-\ldots-\hat{A}^{\vect}_{1}F_{p})^{\intercal}.
    \end{align*}
    Setting this equal to $(\zero_{nm},\ldots,\zero_{nm},K^{-(n,m)})^{\intercal}$
    and solving for $F$ yields
    \begin{align}\label{eq:bk22-lemma-eq-1}
      F_{i}=(\lambda K^{(n,m)})^{-(p-i)}F_{p},\quad\text{for } i=1,\ldots,p-1,
    \end{align}
    and inserting this into the last equation we see that the term
    \begin{align*}
      \lambda F_{p}-\sum_{i=1}^{p}\hat{A}^{\vect}_{p+1-i}F_{i}&=
                                                                \Big(\lambda^{p} (K^{(n,m)})^{p-1}\!-\sum_{i=1}^{p}\hat{A}^{\vect}_{i}(\lambda
                                                       K^{(n,m)})^{p-i}\Big)(\lambda
                                                                K^{(n,m)})^{1-p}F_{p},
    \end{align*}
 must coincide with $K^{-(n,m)}$, which by definition of $\hat{P}(\lambda)$ is equivalent to $F_{p}=(\lambda
 K^{(n,m)})^{p-1}\hat{P}(\lambda)^{-1}$. Hence by~\eqref{eq:bk22-lemma-eq-1} we see
 that $F_{i}=(\lambda K^{(n,m)})^{i-1}\hat{P}(\lambda)^{-1}$, i.e.
 \begin{align*}
   F=(\hat{P}(\lambda)^{-1},\lambda K^{(n,m)}
   \hat{P}(\lambda)^{-1},\ldots,(\lambda
   K^{(n,m)})^{p-1}\hat{P}(\lambda)^{-1})^{\intercal}.  
 \end{align*}
 From this and the definition of $\hat{Q}$ we conclude that
 \begin{align*}
   \hat{\cC}_{q}^{\vect}(\lambda
   \MI_{pnm}-\hat{\cA}^{\vect}_{p})^{-1}\hat{E}^{\vect}_{p}=\hat{\cC}_{q}^{\vect}F=\hat{Q}(\lambda)\hat{P}(\lambda)^{-1},\quad \lambda\in\MC, 
 \end{align*}
 which proves~\eqref{eq:bk22-right-matrix-fraction-describtion}.
\end{proof}
\end{toappendix}

The following proposition is the key result for the definition of matrix-valued
MCARMA processes.

\begin{proposition}\label{prop:bk22-state-space-MCARMA}
  Let $p\in\MN$ and $q\in\MN_{0}$ such that $q<p$ and assume that
  $(\hat{\cA}^{\vect}_{p},\hat{E}^{\vect}_{p},\hat{\cC}^{\vect}_{q},L^{\vect})$
  is as in Proposition~\ref{prop:bk22-vectorized-state-space-model}. Moreover, let
  $\hat{Q}(\lambda)$ and $\hat{P}(\lambda)$ be given by~\eqref{eq:bk22-Q-vect}
  and~\eqref{eq:bk22-P-vect}, respectively. Then there exist
  two matrix polynomials $\tilde{Q},\tilde{P}\in\MM_{nm}(\MR[\lambda])$ such that
  \begin{align}
    \tilde{Q}(\lambda)=\tilde{C}_{0}\lambda^{q}+\tilde{C}_{1}\lambda^{q-1}+\ldots+\tilde{C}_{q},\quad
    \lambda\in\MC,  
  \end{align}
  with $\tilde{C}_{j}\in\MM_{nm}$ for $j=0,\ldots,q$ and
  \begin{align}
    \tilde{P}(\lambda)=\MI_{nm}\lambda^{p}-\tilde{A}_{1}\lambda^{p-1}-\ldots-\tilde{A}_{p},\quad \lambda\in\MC,  
  \end{align}
  with $\tilde{A}_{i}\in\MM_{nm}$ for $i=1,\ldots,p$ satisfying
  \begin{align}\label{eq:bk22-left-right-matrix-fraction-describtion}
    \tilde{P}(\lambda)^{-1}\tilde{Q}(\lambda)=\hat{Q}(\lambda)\hat{P}(\lambda)^{-1}\quad\text{for
    all }\lambda\in\MC,
  \end{align}
  and $\det(\tilde{P}(\lambda))=0$ if and only if
  $\det(\hat{P}(\lambda))=0$. If moreover, the L\'evy process
  $(L^{\mathrm{vec}}_{t})_{t\in\MR}$ satisfies
  $\EX{\log(\norm{L_{1}^{\vect}}_{nm})}<\infty$ and $\hat{P}$ satisfies
  \begin{align}\label{eq:bk22-stationary-solution-condition}
  \set{\lambda \in\MC\colon
    \det(\hat{P}(\lambda))=0}\subseteq \MR\setminus\set{0}+\I\MR,  
  \end{align}
  then equation~\eqref{eq:bk22-vect-X-state-space-rep-2} has a stationary solution,
  unique in law, given by 
  \begin{align}\label{eq:bk22-stationary-solution}
    \vect(X_{t})=\int_{-\infty}^{\infty}g(t-s)\D L^{\vect}_{s},\quad t\in\MR,
  \end{align}
  where $g(t)\df\frac{1}{2\pi}\int_{-\infty}^{\infty}\E^{\I \xi
    t}\tilde{P}(\I \xi)^{-1}\tilde{Q}(\I\xi)\D \xi$ for all $t\in\MR$.
\end{proposition}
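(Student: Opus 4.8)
The plan is to separate the statement into an \emph{algebraic} part---rewriting the right matrix fraction of \cref{lem:bkk22-right-matrix-description} as a left fraction of the prescribed monic shape with matching determinantal zeros---and an \emph{analytic} part---producing the two-sided moving-average representation under the hyperbolicity hypothesis \eqref{eq:bk22-stationary-solution-condition}. For the algebraic part I take as given, from \cref{lem:bkk22-right-matrix-description}, that the transfer function $R(\lambda)\df\hat{\cC}^{\vect}_{q}(\lambda\MI_{pnm}-\hat{\cA}^{\vect}_{p})^{-1}\hat{E}^{\vect}_{p}=\hat{Q}(\lambda)\hat{P}(\lambda)^{-1}$ is a proper rational $\MM_{nm}$-valued function, properness being guaranteed by $q<p$. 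The goal is then to produce $\tilde{P},\tilde{Q}$ with $\tilde{P}$ monic of degree $p$ and $\tilde{Q}$ of degree $q$ such that $\tilde{P}(\lambda)^{-1}\tilde{Q}(\lambda)=R(\lambda)$; equivalently, solving the polynomial Sylvester-type identity $\tilde{Q}(\lambda)\hat{P}(\lambda)=\tilde{P}(\lambda)\hat{Q}(\lambda)$ for matrix polynomials of the stated degrees.

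I would obtain $\tilde{P},\tilde{Q}$ through the theory of polynomial matrix fraction descriptions and their linearizations (see \cite{GLR09} and the MCARMA constructions of \cite{MS07,SS12}). The leading coefficient of $\hat{P}$ in \eqref{eq:bk22-P-vect} is $(K^{(n,m)})^{p}$, which is invertible with determinant $\pm1$; hence $\hat{P}$ is a \emph{regular} matrix polynomial of degree $p$, its determinant has degree $pnm$, and the block-companion operator $\hat{\cA}^{\vect}_{p}$ is a linearization of $\hat{P}$, so that $\det(\lambda\MI_{pnm}-\hat{\cA}^{\vect}_{p})=\pm\det\hat{P}(\lambda)$. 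Passing to an equivalent left fraction of the same block degree then furnishes a monic $\tilde{P}$ of degree $p$ together with a $\tilde{Q}$ of degree $q$ (properness $q<p$ bounding the latter) satisfying $\tilde{P}^{-1}\tilde{Q}=\hat{Q}\hat{P}^{-1}$; the construction can be arranged so that $\det\tilde{P}$ differs from $\det\hat{P}$ by a nonzero constant only, whence $\det\tilde{P}(\lambda)=0\iff\det\hat{P}(\lambda)=0$, as required.

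For the analytic part I invoke \eqref{eq:bk22-stationary-solution-condition}, which by the previous paragraph states that neither $\det\hat{P}$ nor $\det\tilde{P}$ has a purely imaginary zero. Thus $\tilde{P}(\I\xi)$ is invertible for every real $\xi$, and since $\deg\tilde{Q}=q<p=\deg\tilde{P}$ the symbol $\tilde{P}(\I\xi)^{-1}\tilde{Q}(\I\xi)$ is $O(|\xi|^{-(p-q)})$ as $|\xi|\to\infty$, hence square-integrable; by Plancherel the kernel $g$ in \eqref{eq:bk22-stationary-solution} is a well-defined $L^{2}$ function. Evaluating the Fourier integral by residues and splitting the zeros of $\det\tilde{P}$ according to the sign of their real parts exhibits $g$ as a sum of matrix exponentials decaying as $t\to+\infty$ (left half-plane zeros) and as $t\to-\infty$ (right half-plane zeros), so that in fact $g\in L^{1}$. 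Together with $\EX{\log(\norm{L_{1}}_{nm})}<\infty$ this makes the two-sided integral in \eqref{eq:bk22-stationary-solution} convergent in the sense of \cite{Sat99}, and its stationarity follows from the stationarity of the increments of $L^{\vect}$.

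It remains to verify that this process solves the system and is unique in law. Since $g$ is, by construction, the Green's function of the matrix differential operator $\tilde{P}(\mathrm{D})$ with forcing $\tilde{Q}(\mathrm{D})$---that is, $\tilde{P}(\I\xi)\widehat{g}(\xi)=\tilde{Q}(\I\xi)$ on the Fourier side---the process $\vect(X_{t})=\int g(t-s)\,\D L^{\vect}_{s}$ satisfies \eqref{eq:bk22-vect-X-state-space-rep-1}--\eqref{eq:bk22-vect-X-state-space-rep-2}; uniqueness in law is then obtained by comparing characteristic functionals through \eqref{eq:bk22-characteristic-function}, the spectral representation of any stationary solution being pinned down by the symbol $\tilde{P}^{-1}\tilde{Q}$. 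The main obstacle I anticipate is the algebraic step: carrying the commutation matrices $K^{(n,m)}$ through the right-to-left conversion while preserving the exact monic degree-$p$ and degree-$q$ shape, and securing the determinantal zero-set equality. The analytic half is essentially the standard \emph{non-causal} CARMA argument, but it relies crucially on the hyperbolic splitting of the spectrum supplied by \eqref{eq:bk22-stationary-solution-condition} to treat the non-stable regime on the same footing as the stable one.
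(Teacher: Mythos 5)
Your algebraic half coincides with the paper's: the paper obtains $(\tilde{P},\tilde{Q})$ by exactly the same right-to-left matrix-fraction conversion, starting from Lemma~\ref{lem:bkk22-right-matrix-description} and citing \cite[Lemma 6.3-8]{Kai80}, and your observations about the invertible leading coefficient $(K^{(n,m)})^{p}$ and the preservation of the determinantal zero set are consistent with that.

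The analytic half, however, has a genuine gap. A stationary solution of \eqref{eq:bk22-vect-X-state-space-rep-2} is by definition attached to a state process solving \eqref{eq:bk22-vect-X-state-space-rep-1}, and your proposal never constructs that state process. You define the candidate output directly as $\vect(X_{t})=\int_{-\infty}^{\infty}g(t-s)\D L^{\vect}_{s}$ and assert that it satisfies the system because $\tilde{P}(\I\xi)\widehat{g}(\xi)=\tilde{Q}(\I\xi)$, i.e.\ because $g$ is the Green's function of $\tilde{P}(\mathrm{D})$ with forcing $\tilde{Q}(\mathrm{D})$. That Fourier-symbol identity only encodes, formally, the higher-order equation $\tilde{P}(\mathrm{D})X=\tilde{Q}(\mathrm{D})\mathrm{D}L$, which cannot be verified pathwise (the paths of $L$ are not differentiable), and it does not by itself produce a process $Z$ with $\D\vect(Z_{t}^{\intercal})=\hat{\cA}^{\vect}_{p}\vect(Z_{t}^{\intercal})\D t+\hat{E}^{\vect}_{p}\D L^{\vect}_{t}$ and $\vect(X_{t})=\hat{\cC}^{\vect}_{q}\vect(Z_{t}^{\intercal})$. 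The paper fills precisely this hole, and in the opposite logical direction: it defines kernels $g_{1},g_{2}$ as contour integrals of the resolvent $(\lambda\MI_{pnm}-\hat{\cA}^{\vect}_{p})^{-1}\hat{E}^{\vect}_{p}$ over closed curves encircling the stable, respectively antistable, spectrum; sets $Z_{t}=\int_{-\infty}^{t}g_{1}(t-u)\D L^{\vect}_{u}-\int_{t}^{\infty}g_{2}(t-u)\D L^{\vect}_{u}$ as in \eqref{eq:bk22-Z-stationary-solution}; verifies $\tfrac{\D}{\D t}g_{i}=\hat{\cA}^{\vect}_{p}g_{i}$ and uses the spectral representation of $\E^{t\hat{\cA}^{\vect}_{p}}$ to show that this $Z$ solves \eqref{eq:bk22-vect-X-state-space-rep-1}; and only then proves, by expanding the two rectangular contours and estimating away their three non-imaginary sides (this is where $p>q$ enters), that the output kernel $\hat{\cC}^{\vect}_{q}\bigl(g_{1}\one_{[0,\infty)}-g_{2}\one_{(-\infty,0)}\bigr)$ equals your $g$ --- identity \eqref{eq:bk22-complex-integral}. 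This contour-deformation identification is the analytic core of the proposition, and it is exactly what your residue and Green's-function sentences assume rather than prove; your residue decomposition of $g$ into forward- and backward-decaying exponentials is morally this same splitting, but at the output level, from which the state process cannot simply be read off. Your uniqueness step inherits the same defect: without first knowing that an arbitrary stationary solution pair $(Z,X)$ has the spectral-splitting form, there is no formula for its characteristic functional to compare against.
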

\begin{toappendix}
  \begin{proof}[Proof of Proposition~\ref{prop:bk22-state-space-MCARMA}]
    The existence of $\tilde{Q},\tilde{P}\in\MM_{nm}(\MR[\lambda])$ such
    that~\eqref{eq:bk22-left-right-matrix-fraction-describtion} holds with
  $\det(\tilde{P}(\lambda))=0$ if and only if $\det(\hat{P}(\lambda))=0$
  follows immediately from~\cite[Lemma 6.3-8]{Kai80} and
  Lemma~\ref{lem:bkk22-right-matrix-description}. From~\eqref{eq:bk22-right-matrix-fraction-describtion}
  it then follows that $(\tilde{P},\tilde{Q})$ is a left-matrix fraction
  description of the transfer function
  $H(\lambda)=\hat{\cC}^{\vect}_{q}(\lambda\opI_{pnm}-\hat{\cA}^{\vect}_{p})^{-1}\hat{E}^{\vect}_{p}$,
  i.e. 
  \begin{align}\label{eq:bk22-state-space-MCARMA-1}
   \hat{\cC}^{\vect}_{q}(\lambda
    \MI_{pnm}-\hat{\cA}^{\vect}_{p})^{-1}\hat{E}^{\vect}_{p}=\tilde{P}(\lambda)^{-1}\tilde{Q}(\lambda),\quad
    \forall \lambda\in\MC.
  \end{align}

  We define the two kernels $g_{1},g_{2}\colon\MR\to\cL(\MR_{nm},\MR_{nmp})$ by
  \begin{align}\label{eq:bk22-kernels-g}
   g_{i}(t)\df \frac{1}{2\pi \I}\int_{\rho_{i}}\E^{\lambda t}(\lambda
    \MI_{pnm}-\hat{\cA}^{\vect}_{p})^{-1}\hat{E}^{\vect}_{p}\D \lambda,\quad
    t\in\MR\text{ and } i=1,2,  
  \end{align}
  where we integrate anti-clockwise over simple closed curves $\rho_{1}$ and
  $\rho_{2}$ in the open left- and right-half plane of the complex field
  encircling the zeroes of the map $\lambda\mapsto(\lambda
  \MI_{pnm}-\hat{\cA}^{\vect}_{p})^{-1}\hat{E}^{\vect}_{p}$. More
  specifically, let $\rho_{1}$ be the rectangle in the left-half plane with
  width $M$ and height $2R$ (with $M$ and $R$ large enough such that all
  eigenvalues in the left-half plain are encircled) and such that the line segment
  \begin{align*}
  \rho_{\I R}\df\set{\lambda\in\MC\colon \Re(\lambda)=0\text{ and }|\Im(\lambda)|\leq R},
  \end{align*}
  forms an edge of $\rho_{1}$. We then define the curve $\rho_{2}$ as the
  reflection of $\rho_{1}$ over the imaginary axis. Moreover, for $i=1,2$ we
  denote by $\hat{\rho}_{i}$ the curve $\rho_{i}$ without the line segment on the imaginary axis.\par{}
  The complex integrals in~\eqref{eq:bk22-kernels-g} are well defined, since by
  assumption~\eqref{eq:bk22-stationary-solution-condition} there is no singularity of
  $\lambda\mapsto(\lambda
  \MI_{pnm}-\hat{\cA}^{\vect}_{p})^{-1}\hat{E}^{\vect}_{p}=(\MI_{nm},\lambda
  \MI_{nm},\ldots,\lambda^{p}\MI_{nm})^{\intercal}\tilde{P}(\lambda)^{-1}$ on the
  imaginary axis. For every $t\in\MR$ we set 
    \begin{align}\label{eq:bk22-Z-stationary-solution}
      Z_{t}\df\int_{-\infty}^{t}g_{1}(t-u)\D L_{u}-\int_{t}^{\infty}g_{2}(t-u)\D L_{u},  
    \end{align}
    and show in the following that the integrals over the kernels $g_{1}$ and
    $g_{2}$ are well defined as the limit of integrals
    $\lim_{T\to\infty}\int_{-T}^{t}g_{1}(t-u)\D L_{u}$ and
    $\lim_{T\to\infty}\int_{t}^{T}g_{2}(t-u)\D L_{u}$, respectively. Moreover, we show
    that $(Z_{t})_{t\in\MR}$ is the unique stationary solution 
    of~\eqref{eq:bk22-vect-X-state-space-rep-1}. First, note that for $i=1,2$ the
    kernel $g_{i}$ satisfies the equation $\frac{\D}{\D
      t}g_{i}(t)=\hat{\cA}^{\vect}_{p}g_{i}(t)$, which can be seen by similar
    arguments as in Lemma~\ref{lem:bkk22-right-matrix-description}. Indeed, note
    that we have
    \begin{align*}
      \hat{\cA}^{\vect}_{p}g_{i}(t)=\frac{1}{2\pi \I}\int_{\rho_{i}}\E^{\lambda
      t}(\lambda \MI_{pnm}-\hat{\cA}^{\vect}_{p})^{-1}\hat{\cA}^{\vect}_{p}\hat{E}^{\vect}_{p}\D \lambda,
    \end{align*}
    and the term $(\lambda
    \MI_{pnm}-\hat{\cA}^{\vect}_{p})^{-1}\hat{\cA}^{\vect}_{p}\hat{E}^{\vect}_{p}$
    can be computed by solving the following linear matrix equation
    \begin{align*}
      (\lambda \MI_{pnm}-\hat{\cA}^{\vect}_{p})^{-1}F=\hat{\cA}^{\vect}_{p}\hat{E}^{\vect}_{p},  
    \end{align*}
    for $F\df(F_{1},F_{2},\ldots,F_{p})^{\intercal}\in\cL(\MR_{nm},\MR_{pnm})$
    with $F_{i}\in \MM_{nm}$ for all $i=1,\ldots,p$. Since
    $\hat{\cA}^{\vect}_{p}\hat{E}^{\vect}_{p}=(\zero_{nm},\ldots,\zero_{nm},K^{-(n,m)},\hat{A}_{1}^{\vect})$
    we can argue similarly to the proof of
    Lemma~\ref{lem:bkk22-right-matrix-description} and obtain  
    \begin{align*}
      F=(\lambda\MI_{nm},\ldots,\lambda^{p+1}(K^{(n,m)})^{p}\MI_{nm})^{\intercal}\hat{P}(\lambda)^{-1}-(\zero_{nm},\ldots,\zero_{nm},K^{(n,m)})^{\intercal}.
    \end{align*}
    Note that by integrating over the closed curves $\rho_{i}$ for $i=1,2$, the
    integral over the latter term vanishes and hence for all $t\in\MR$ we obtain
  \begin{align*}
    \frac{\D}{\D t}g_{i}(t)&=\frac{1}{2\pi \I}\int_{\rho_{i}}\E^{\lambda
    t}(\lambda\MI_{nm},\lambda^{2}K^{(n,m)},\ldots,\lambda^{p+1}(K^{(n,m)})^{p}\MI_{nm})^{\intercal}\hat{P}(\lambda)^{-1}\D
    \lambda\\
    &=\hat{\cA}^{\vect}_{p}g_{i}(t),\quad i=1,2.
  \end{align*}
  Since the homogeneous linear equation $\frac{\D}{\D
    t}v(t)=\hat{\cA}^{\vect}_{p}v(t)$ is uniquely solved by
  $\E^{t\hat{\cA}^{\vect}_{p}}v_{0}$, we see that
  $g_{i}(t)=\E^{t\hat{\cA}^{\vect}_{p}}g_{i}(0)$ for $i=1,2$ and
  $t\in\MR$. This has the following consequences: First, it follows that there
  exist $K>0$ and $\delta>0$ such that for all $u\leq 0$ we have
  $\norm{g_{1}(-u)}_{\cL(\MM_{n,m},(\MM_{n,m})^{p})}\leq K \E^{-\delta |u|}$
  and for all $u\geq 0$ we have
  $\norm{g_{2}(-u)}_{\cL(\MM_{n,m},(\MM_{n,m})^{p})}\leq K \E^{-\delta
    |u|}$. This together with $\EX{\log(\norm{L_{1}}_{nm})}<\infty$ implies
  the existence of the integrals  $\int_{-\infty}^{t}g_{1}(t-u)\D L_{u}$ and
  $\int_{t}^{\infty}g_{2}(t-u)\D L_{u}$, respectively, as limits of integrals over the
  intervals $(-T,t]$, resp. $[t,T)$, for $T\to\infty$, see
  also~\cite{CM87}.\par{} Next, we show that $(Z_{t})_{t\in\MR}$ is a solution
  of~\eqref{eq:bk22-vect-X-state-space-rep-1}, where as a second consequence from
  $g_{i}(t)=\E^{t\hat{\cA}^{\vect}_{p}}g_{i}(0)$ for $i=1,2$ and $t\in\MR$ we
  conclude that for every $s<t\in\MR$ the following equality holds true:
  \begin{align}\label{eq:bk22-state-space-MCARMA-2}
    \E^{(t-s)\hat{\cA}_{p}^{\vect}}Z_{s}&=\E^{(t-s)\hat{\cA}_{p}^{\vect}}\Big(\int_{-\infty}^{s}g_{1}(s-u)\D
                                          L_{u} -\int_{s}^{\infty}g_{2}(s-u)\D
                                          L_{u} \Big)\nonumber\\
    &= \E^{t\hat{\cA}_{p}^{\vect}}\Big(\int_{-\infty}^{s}g_{1}(-u)\D
      L_{u}-\int_{s}^{\infty}g_{2}(-u)\D L_{u}\Big).
  \end{align}
  Now, by setting $\rho=\rho_{1}+\rho_{2}$, the spectral representation of the
  matrix exponential $\E^{t\hat{\cA}^{\vect}_{p}}$, see
  e.g.~\cite[Theorem 17.5]{Lax02}, yields
  \begin{align*}
   \E^{t\hat{\cA}^{\vect}_{p}}\hat{E}^{\vect}_{p}=\frac{1}{2\pi\I}\int_{\rho}\E^{\lambda
    t}(\lambda\MI_{pnm}-\hat{\cA}^{\vect}_{p})^{-1}\hat{E}^{\vect}_{p}\D \lambda,\quad \forall t\in\MR,
  \end{align*}
  
  and hence for every $t\in\MR$ we have
  \begin{align}\label{eq:bk22-state-space-MCARMA-3}
    \int_{s}^{t}\E^{(t-u)\hat{\cA}_{p}^{\vect}}\hat{E}^{\vect}_{p}\D L_{u}=
    \E^{t\hat{\cA}_{p}^{\vect}}\Big(\int_{s}^{t}g_{1}(-u)\D
    L_{u}+\int_{s}^{t}g_{2}(-u)\D L_{u}\Big). 
  \end{align}
  By summing up~\eqref{eq:bk22-state-space-MCARMA-2}
  and~\eqref{eq:bk22-state-space-MCARMA-3} we obtain 
  \begin{align*}
  \E^{t\hat{\cA}_{p}^{\vect}}\left(\int_{-\infty}^{t}g_{1}(-u)\D
         L_{u}-\int_{t}^{\infty}g_{2}(-u)\D L_{u}\right)=Z_{t},
  \end{align*}
  which proves that $(Z_{t})_{t\in\MR}$ is a solution
  of~\eqref{eq:bk22-vect-X-state-space-rep-1}. Moreover, it is easy to see
  that $(Z_{t})_{t\in\MR}$ is also stationary and unique in law. Now, let us
  set $Y_{t}\df \hat{\cC}_{q}^{\vect}(Z_{t})$ and $h(t)\df
  g_{1}(t)\one_{[0,\infty)}(t)-g_{2}(t)\one_{(-\infty,0)}(t)$ for all $t\in\MR$, then
  \begin{align*}
    Y_{t}=\int\limits_{-\infty}^{\infty}\!\hat{\cC}_{q}^{\vect}h(t-u)\D L_{u}=\int\limits_{-\infty}^{\infty}\!\big(\frac{1}{2\pi\I}\int_{\rho}\!\E^{\lambda(t-u)}\hat{\cC}_{q}^{\vect}(\lambda\MI_{nm}-\hat{\cA}_{p}^{\vect})^{-1}\hat{E}_{p}^{\vect}\D
           \lambda\big)\D L_{u},
  \end{align*}
  which by~\eqref{eq:bk22-state-space-MCARMA-1} equals
  \begin{align}\label{eq:bk22-complex-integral-2}
    Y_{t}=\int_{-\infty}^{\infty}\Big(\frac{1}{2\pi\I}\int_{\rho}\E^{\lambda(t-u)}\tilde{P}^{-1}(\lambda)\tilde{Q}(\lambda)\D
    \lambda\Big) \D L_{u},\quad t\in\MR,
  \end{align}
  and if
  \begin{align}\label{eq:bk22-complex-integral}
    \int_{-\infty}^{\infty}\Big(\frac{1}{2\pi\I}\int_{\rho}\E^{\lambda(t-u)}\tilde{P}^{-1}(\lambda)\tilde{Q}(\lambda)\D\lambda\Big)=\int_{-\infty}^{\infty}g(t-u)\D
    L_{u},\quad t\in\MR,
  \end{align}
  then by uniqueness we conclude that $\vect(X_{t})=Y_{t}$ for $t\in\MR$ and
  that the representation~\eqref{eq:bk22-stationary-solution} holds true. It is
  therefore left to prove that the identity in~\eqref{eq:bk22-complex-integral}
  holds true for every $t\in\MR$. In order to prove this, we set $K(z,t)\df\E^{-z
    t}\tilde{P}(z)^{-1}\tilde{Q}(z)$ and use the integration paths from above,
  i.e. $\rho_{i}=\hat{\rho}_{i}\pm\rho_{\I R}$.\par{} We then compute the left
  hand-side in~\eqref{eq:bk22-complex-integral} as follows:
  \begin{align*}
    h(t)&=g_{1}(t)\one_{[0,\infty)}(t)-g_{2}(t)\one_{(-\infty,0)}(t)\\
        &=\frac{1}{2\pi\I}\left(\int_{\rho_{1}}K(z,t)\D
          z\one_{[0,\infty)}(t)-\int_{\rho_{2}}K(z,t)\D z\one_{(-\infty,0)}(t)
          \right)\\
        &=\frac{1}{2\pi\I}\left(\int_{\hat{\rho}_{1}}K(z,t)\D
          z\one_{[0,\infty)}(t)-\int_{\hat{\rho}_{2}}K(z,t)\D z\one_{(-\infty,0)}(t)
          \right)\\
        &\quad+\frac{1}{2\pi\I}\left(\int_{-\I R}^{\I R}K(z,t)\D
          z\one_{[0,\infty)}(t)-\int_{\I R}^{-\I R}K(z,t)\D z\one_{(-\infty,0)}(t)
          \right)\\
        &=\frac{1}{2\pi\I}\left(\int_{\hat{\rho}_{1}}K(z,t)\D
          z\one_{[0,\infty)}(t)-\int_{\hat{\rho}_{2}}K(z,t)\D z\one_{(-\infty,0)}(t)
          \right)\\
        &\quad+\frac{1}{2\pi}\left(\int_{-R}^{R}K(\I\xi,t)\D
          \xi\one_{[0,\infty)}(t)+\int_{-R}^{R}K(\I\xi,t)\D \xi\one_{(-\infty,0)}(t)
          \right)\\
        &=\frac{1}{2\pi}\int_{-R}^{R}\!K(\I\xi,t)\D
          \xi+\frac{1}{2\pi\I}\big(\int\limits_{\hat{\rho}_{1}}\!K(z,t)\D
          z\one_{[0,\infty)}(t)-\int\limits_{\hat{\rho}_{2}}\!K(z,t)\D z\one_{(-\infty,0)}(t)
          \big).
  \end{align*}
  Now by letting $R\to\infty$, we see that for every $t\in\MR{}$ the first
  term converges to $g(t)=\frac{1}{2\pi}\int_{-\infty}^{\infty}K(\I\xi,t)\D
  \xi$ and it remains to show that the latter term converges to zero. For this, we first
  consider the term $\int_{\hat{\rho}_{1}}K(z,t)\D z\one_{[0,\infty)}(t)$ for
  $t\geq 0$ and note that the integral over $\hat{\rho}_{1}$ can
  be split into three separate integrals: The first is
  \begin{align*}
    \int_{\I R-M}^{-\I R-M}\E^{\lambda
    t}\tilde{P}(\lambda)^{-1}\tilde{Q}(\lambda)\D\lambda=\int_{-R}^{R}\E^{-\I \xi t}\E^{-t
    M}\tilde{P}(-\I\xi-M)^{-1}\tilde{Q}(-\I\xi-M)\D\xi,
  \end{align*}
  where the term $\E^{-t M}$ in the integral dictates the convergence to zero as
  $M\to\infty$ for arbitrary $R$. The two other terms are
  given by:
  \begin{align*}
    \int_{\I R}^{\I R-M}\E^{\lambda
    t}\tilde{P}(\lambda)^{-1}\tilde{Q}(\lambda)\D\lambda=-\int_{0}^{M}\E^{(\I R-
    \zeta)t}\tilde{P}(\I R- \zeta)^{-1}\tilde{Q}(\I R- \zeta)\D\zeta,\quad \text{and}\\
    \int_{-\I R-M}^{-\I M}\E^{\lambda
    t}\tilde{P}(\lambda)^{-1}\tilde{Q}(\lambda)\D\lambda=\int_{0}^{-M}\E^{(-\I R+\zeta)t}\tilde{P}(-\I R+\zeta)^{-1}\tilde{Q}(-\I R+\zeta)\D\zeta,
  \end{align*}
  where in both cases the integrals exists for arbitrary large $M$ and due to
  the fact that
  $p>q$ implies that $\tilde{P}(-\I R+\zeta)^{-1}\tilde{Q}(-\I R+\zeta)\to 0$
  as $R\to\infty$, we conclude that both integrals converge to zero as
  $R\to\infty$. Similarly, for the integral over $\hat{\rho}_{2}$ and all
  $t<0$, we see that
  \begin{align*}
    \int_{-\I R+M}^{\I R+M}\E^{\lambda
    t}\tilde{P}(\lambda)^{-1}\tilde{Q}(\lambda)\D\lambda=\int_{-R}^{R}\E^{-\I \xi t}\E^{t
    M}\tilde{P}(-\I\xi-M)^{-1}\tilde{Q}(-\I\xi-M)\D\xi,
  \end{align*}
  which, as before, converges to zero as $M\to\infty$ for every $R\geq 0$ since
  $t<0$ and the matrix exponential is the dominating term. For the remaining
  parts, we have  
  \begin{align*}
    \int_{\I R+M}^{\I R}\E^{\lambda 
    t}\tilde{P}(\lambda)^{-1}\tilde{Q}(\lambda)\D\lambda=-\int_{0}^{-M}\E^{(\I R- \zeta)t}\tilde{P}(\I R- \zeta)^{-1}\tilde{Q}(\I R- \zeta)\D\zeta,
  \end{align*}
  and
  \begin{align*}
    \int_{-\I R}^{-\I R+M}\E^{\lambda
    t}\tilde{P}(\lambda)^{-1}\tilde{Q}(\lambda)\D\lambda=-\int_{0}^{M}\E^{-(\I
    R-\zeta)t}\tilde{P}(\zeta-\I R)^{-1}\tilde{Q}(\zeta-\I R)\D\zeta,
  \end{align*}
  where we see that for every $M>0$ and $t<0$, since $p>q$ the integrals converge to
  zero, whenever $R\to\infty$. Thus the only remaining term of $h(t)$ when
  expanding the integration domain is
  $g(t)=\frac{1}{2\pi}\int_{-\infty}^{\infty}K(\I\xi,t)\D \xi$ which
  proves~\eqref{eq:bk22-complex-integral} and we conclude the assertions of
  Proposition~\ref{prop:bk22-state-space-MCARMA}.  
\end{proof}
\end{toappendix}

Following~\cite[Theorem 3.22]{MS07}, every $\MR_{nm}$-valued MCARMA process
$(Y_{t})_{t\in\MR}$ with \emph{moving-average polynomial matrix}
$\check{Q}\in\MM_{nm}(\MR[\lambda])$, \emph{autoregressive polynomial matrix}
$\check{P}\in\MM_{nm}(\MR[\lambda])$ and input L\'evy process $\check{L}$ on
$\MR_{nm}$ is given by 
\begin{align*}
  Y_{t}=\frac{1}{2\pi}\int_{-\infty}^{\infty}\int_{-\infty}^{\infty}\E^{\I \xi
  (t-s)}\check{P}(\I\xi)^{-1}\check{Q}(\I\xi)\D \xi\D \check{L}_{s},\quad t\in\MR, 
\end{align*}
where $\check{P}$ satisfies $\set{\lambda\in\MC\colon\,
  \det(\check{P}(\lambda))=0}\subseteq \MR\setminus\set{0}+\I\MR$ and
$\check{L}$ is such that $\EX{\log(\norm{\check{L}_{1}}_{nm})}<\infty$. It thus
follows from Proposition~\ref{prop:bk22-state-space-MCARMA}, that the unique stationary process
$(\vect(X_{t}))_{t\geq 0}$ in~\eqref{eq:bk22-stationary-solution} is an
$\MR_{mn}$-valued MCARMA process with moving-average polynomial matrix
$\tilde{Q}$, autoregressive polynomial matrix $\tilde{P}$ and L\'evy noise
$L^{\vect}$. Moreover,
by~\cite[Remark 3.19]{MS07} it can be interpreted as the solution of the
higher order stochastic differential
equation~\eqref{eq:bk22-MCARMA-higher-order-SDE} with $L$ replaced by
$L^{\vect}$. This justifies the following definition of a matrix-valued multivariate continuous-time autoregressive
moving-average process (by means of transformed $\MR_{nm}$-valued MCARMA processes):  

\begin{definition}\label{def:bk22-CARMA-p-q}
  Let $p\in\MN$ and $q\in\MN_{0}$ such that $q<p$ and
  $(\cA_{p},E_{p},\cC_{q},L)$ be as in~\eqref{eq:bk22-Ap}-\eqref{eq:bk22-Ep}. If
  moreover $\EX{\log(\norm{L_{1}}_{n,m})}<\infty$
  and~\eqref{eq:bk22-stationary-solution-condition} is satisfied, then we call 
  the unique output process $(X_{t})_{t\in\MR}$ in~\eqref{eq:bk22-X-controller} for which
  $(\vect(X_{t}))_{t\in\MR}$ is the stationary solution to~\eqref{eq:bk22-vect-X-state-space-rep-2}, an \emph{$\MM_{n,m}$-valued continuous-time
    autoregressive moving-average (MCARMA) process of order $(p,q)$}. In the
  special case where $q=0$ and $C_{0}=\opI$,
  i.e. $\cC_{0}=[\opI,\op0,\ldots,\op0]$, we call $(X_{t})_{t\in\MR}$
  an \emph{$\MM_{n,m}$-valued MCAR process of order $p$} instead. Moreover, if
  in the situation above we have $\tau(\cA_{p})<0$, or equivalently
  \begin{align*}
    \set{\lambda\in\MC\colon\, \det(\hat{P}(\lambda))=0}\subset (-\infty,0)+\I\MR, 
  \end{align*}
  then we say that $(X_{t})_{t\in\MR}$ is a \emph{causal} MCAR(MA) process of
  order $(p,q)$ (resp. $p$). Otherwise, if $\tau(\cA_{p})\geq
  0$ we say that $(X_{t})_{t\in\MR}$ is \emph{non-causal}. 
\end{definition}

\begin{remark}\label{rem:MA-and-AR-polynomials}
  \begin{enumerate}
  \item[i)] Note that by Definition~\ref{def:bk22-CARMA-p-q}
      the state space representation of a matrix-valued MCARMA process is
      given in controller canonical form. This differs from the
      definition of the state space
      representation of the classical MCARMA in~\cite{MS07}, which is given in
      the \emph{observer canocial form}, see also~\cite{SS12}. However, it
      follows from~\cite[Theorem 3.2]{BS13} that every MCARMA process also
      possesses a representation in controller canonical form. 
  \item[ii)] By Definition~\ref{def:bk22-CARMA-p-q} we see that every causal
    MCARMA process with values in $\MM_{n,m}$ is the output process of a stable
    linear state space model. In contrast, non-causal MCARMA processes
    correspond to non-stable linear state space models. Moreover, it can be seen from the
    representation~\eqref{eq:bk22-stationary-solution} (see also the
    decomposition~\eqref{eq:bk22-Z-stationary-solution} in the proof of
    Proposition~\ref{prop:bk22-state-space-MCARMA}), that non-causal MCARMA processes are not adapted to the
    natural filtration $\MF^{L}$, since for $t\in\MR$, $X_{t}$ depends on
    $\sigma(L_{s}\colon s>t)$.  
  \item[iii)] We want to emphasize here that the vectorized versions $\hat{P}$
    and $\hat{Q}$ of the operator polynomials $\opP$ and $\opQ$
    in~\eqref{eq:bk22-P} and~\eqref{eq:bk22-Q} can in general not be interpreted as the
    moving-average, respectively, autoregressive polynomials of the MCARMA
    process. Instead, the polynomials $\tilde{Q}$ and $\tilde{P}$
    in~\eqref{eq:bk22-left-right-matrix-fraction-describtion} can be naturally
    interpreted as such, i.e. by means of
    equation~\eqref{eq:bk22-MCARMA-higher-order-SDE}. If, however, $\hat{P}$ and
    $\hat{Q}$ commute, then $\hat{P}(\lambda)=\tilde{P}(\lambda)$ and
    $\hat{Q}(\lambda)=\tilde{Q}(\lambda)$ for all $\lambda\in\MC$.     
  \end{enumerate}  
\end{remark}

\section{Cone valued MCARMA processes}\label{sec:bk22-posit-mcarma-proc}

In this section we study \emph{cone-invariance} of $\MM_{n,m}$-valued MCARMA
processes of order $(p,q)$, as they were defined in
Definition~\ref{def:bk22-CARMA-p-q} above. Our main examples for multivariate
cones are $\MR_{d}^{+}$ and $\MS_{d}^{+}$ for $d\in\MN$, where in both cases we
sometimes speak of \emph{positive} processes (instead of cone valued). This
section is divided into the case of causal MCARMA processes treated in
Section~\ref{sec:bk22-stat-mcarma-proc} and the case of non-causal MCARMA
processes in Section~\ref{sec:bk22-non-stat-mcarma}. A careful distinction
between the two cases is justified as the cone-invariance constraints may
interact with the stability conditions. The main results of this section are
Theorem~\ref{thm:bk22-positive-stationary} and
Theorem~\ref{thm:bk22-positivite-non-stable-MCARMA} below which establish
sufficient and/or necessary conditions for the cone-invariance of
$\MM_{n,m}$-valued causal and non-causal MCARMA processes.  

\subsection{Positive operators and increasing L\'evy
  processes}\label{sec:bk22-cone-valued-levy}
Before we study cone-invariance of $\MM_{n,m}$-valued MCARMA processes in the
next two sections, we recall some additional preliminaries concerning
\emph{convex cones}, \emph{(quasi)-positive} operators and \emph{increasing L\'evy processes}. For $n,m\in\MN$, we consider the inner product space
$(\MM_{n,m},(\cdot,\cdot)_{n,m})$ and, as usual, identify $\MM_{n,m}$
  with its dual space $\MM_{n,m}^{*}$. Throughout this section we assume that
$\cK$ is a (convex) cone in
$\MM_{n,m}$, i.e. $\cK\subseteq \MM_{n,m}$ is such that $\cK+\cK\subseteq \cK$,
$\lambda \cK\subseteq \cK$ for all $\lambda\in\MRplus$ and $\cK\cap
(-\cK)=\set{\zero_{n,m}}$, and assume that the interior of
  $\cK$ is non-empty. Note that some authors call $\cK$ a
  \emph{pointed} or \emph{proper cone}, in case that the zero element is contained in
  $\cK$. Moreover, we write ``$\leq_{\cK}$'' for the partial-ordering on $\MM_{n,m}$ induced by $\cK$, i.e. for $x,y\in
\MM_{n,m}$: $x\leq_{\cK} y$ if and only if $y-x\in \cK$.

\subsubsection*{Positive and quasi-positive operators}
We denote by $\pi(\cK)\subseteq \cL(\MM_{n,m})$ the set of all linear operators
leaving the cone $\cK$ invariant, i.e. 
\begin{align*}
\pi(\cK)=\set{A\in\cL(\MM_{n,m})\colon A(u)\geq_{\cK} 0\,\text{for all }u\geq_{\cK} 0}.  
\end{align*}
We call elements in $\pi(\cK)$ \emph{positive operators} on $\MM_{n,m}$. Note that the set
$\pi(\cK)$ is a  \emph{convex algebra cone}, that means it is a convex cone such that
$B_{1},B_{2}\in \pi(\cK)$ implies $B_{1}B_{2}\in \pi(\cK)$. We denote by
``$\preceq$'' the partial ordering on $\cL(\MM_{n,m})$ induced by
$\pi(\cK)$. Moreover, we call an element $A\in\cL(\MM_{n,m})$
\emph{quasi-positive}, if $\exp(A t)\succeq 0$ for all $t\geq 0$, where $\exp(A
t)$ denotes the operator exponential of $A t$.  
  \begin{remark}
    The notion of a quasi-positive operator is closely related to the concept of
    \emph{quasi-monotone increasing (qmi)} functions. Indeed, an operator $A$ is
    quasi-positive if and only if the map $u\mapsto A u$ is \emph{quasi monotone
      increasing}, i.e. $u\leq_{\cK} v$ and $x\in\cK^{*}\df\set{v\in\MM_{n,m}\colon
      \langle v,u\rangle_{n,m} \geq 0\,,\forall u\in\cK}$ with $\langle
    v-u,x\rangle_{n,m}=0$ implies $\langle Av-Au,x\rangle_{n,m}\geq 0$, see,
    e.g.,~\cite{HL98, Els74}. Note that the concept of qmi functions is also
    crucial in the theory of affine processes on cones, where
    it is known that the right-hand side functions of the associated Riccati
    equations must be quasi-monotone increasing to ensure the cone-invariance of
    its solution, see~\cite{CMET16, CKK22a}.
\end{remark}

The following two cases are our main examples for convex cones in $\MM_{n,m}$:
\begin{enumerate}
\item[a)] For $m=1$ and $n=d$ we have
  $(\MR_{d},(\cdot,\cdot)_{d})=(\MM_{d,1},(\cdot,\cdot)_{d,1})$. On $\MR_{d}$
  we consider the positive orthant $\cK=\MR^{+}_{d}$, which is a convex cone and
  we denote its induced partial
  ordering by ``$\leq_{d}$''. It is well known that the cone $\pi(\MR_{d}^{+})\subseteq \MM_{d}$ of
  $\MR_{d}^{+}$-preserving linear maps (matrices) is given by the set of all
  positive matrices (more precisely \emph{non-negative matrices}), i.e.
  \begin{align*}
    \pi(\MR_{d}^{+})=\set{(a_{i,j})_{1\leq i,j\leq d}\in\MM_{d}\colon\,
    a_{i,j}\geq 0\,\, \forall i,j=1,\ldots,d}.    
  \end{align*}
  It is also well-known that a matrix $A=(a_{i,j})_{1\leq i,j\leq d}\in\MM_{d}$ is
  quasi-positive (sometimes called \emph{cross-positive}) if and only if
  $a_{i,j}\geq 0$ for all $i,j=1,\ldots,d$ such that $i\neq j$, i.e. all
  off-diagonal elements are non-negative and the diagonal ones can be arbitrary, see e.g.~\cite{HL98}.
\item[b)] In case of $n=m=d$ for some $d\in\MN$ we consider the space of real
  $d\times d$-matrices $\MM_{d}$. On $\MM_{d}$ we consider the convex cone of
  all symmetric and positive-semi definite matrices $\cK=\MS^{+}_{d}$ and denote
  the induced partial ordering by ``$\leq_{\MS_{d}^{+}}$''. As far as we know
  there is no analogous characterization of the set $\pi(\MS_{d}^{+})$
  known. Partial results were achieved in this direction, see
  e.g.~\cite{LT92} for some related results in the theory of linear preserver
  problems.   
\end{enumerate}
\subsubsection*{Multivariate L\'evy processes on cones} Let
$L=(L_{t})_{t\in\MR}$ denote a two-sided L\'evy process on $\MM_{n,m}$ defined on some
filtered probability space $(\Omega,\cF,\MF,\MP)$ and let $\cK$ be a convex cone
in $\MM_{n,m}$. We define an \emph{increasing L\'evy process} as follows:
  \begin{definition}
    We call a two-sided L\'evy process $(L_{t})_{t\in\MR}$ on $\MM_{n,m}$
    $\cK$-\emph{increasing}, whenever $L_{t}-L_{s}\in \cK$ for all $t,s\in\MR$ such
    that $s<t$. If $L$ has representation~\eqref{eq:two-sided-Levy}, then this is
    equivalent to $L^{(1)}$ and $L^{(2)}$ being $\cK$-valued L\'evy processes.  
  \end{definition}
The characteristic exponent~\eqref{eq:bk22-chracteristic-Levy} of a
$\cK$-increasing L\'evy process is given by  
\begin{align*}
  \varphi_{L}(z)=\I \langle
  \gamma_{L},z\rangle_{n,m}+\int_{\cK}(\E^{\I\langle
  \xi,z\rangle_{n,m}}-1)\nu_{L}(\D\xi),\quad z\in \MM_{n,m} ,
\end{align*}
where $\gamma_{L}\in \cK$ and the L\'evy measure $\nu_{L}$ is concentrated on
$\cK\setminus\set{\zero_{n,m}}$ and satisfies $\int_{\cK}(1\wedge \xi)\nu(\D\xi)<\infty$, hence jumps, small or
large, of the L\'evy process are $\cK$-valued and of finite-variation. Moreover,
note that compared to~\eqref{eq:bk22-chracteristic-Levy} the diffusion part
vanishes, i.e. an increasing L\'evy process is of pure-jump type. We refer
to~\cite{BNPA08} for more information on matrix-valued (increasing) L\'evy processes.

\subsection{Cone valued causal MCARMA processes}\label{sec:bk22-stat-mcarma-proc}
Throughout this section we fix $n,m\in\MN$ and let $\cK$ be a cone in
$\MM_{n,m}$. Moreover, we assume that $L=(L_{t})_{t\in\MR}$ is a $\cK$-increasing
two-sided L\'evy process defined on the filtered probability space
$(\Omega,\cF,\MF,\MP)$ such that $\EX{\log(\norm{L_{1}}_{n,m})}<\infty$. We let
$\cA_{p}$, $E_{p}$ and $\cC_{q}$ be as in~\eqref{eq:bk22-Ap}-\eqref{eq:bk22-Ep}
and assume that $\tau(\cA_{p})<0$. This of course implies that the stable output
process $(X_{t})_{t\in\MR}$, associated with $(\cA_{p},E_{p},\cC_{q},L)$, is a
causal $\MM_{n,m}$-valued MCARMA process given by  
\begin{align}\label{eq:bk22-stationary-carma}
  X_{t}=\int_{-\infty}^{t}\cC_{q} \E^{(t-s)\cA_{p}}E_{p}\D L_{s},\quad t\in\MR.
\end{align}

We recall the following connection between quasi-positive operators and their
resolvent, see~\cite{HL98, Els74}:

\begin{lemma}\label{lem:bkk22-inverse-quasi-positive}
  Let $V$ be a linear space and $\cK\subseteq V$ a convex cone. Then for every
  quasi-positive $\opA\in\cL(V)$ with $\tau(\opA)<0$ we have $-\opA^{-1}\succeq
  0$.  
\end{lemma}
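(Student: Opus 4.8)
The plan is to realise $-\opA^{-1}$ as the absolutely convergent integral of the positive operators $\E^{t\opA}$ and then to transfer positivity from the integrand to the integral. The representation I would use is
\[
  -\opA^{-1}=\int_{0}^{\infty}\E^{t\opA}\,\D t .
\]
This draws two things from the hypothesis $\tau(\opA)<0$: first, that $0\notin\sigma(\opA)$, so $\opA$ is invertible; and second, that there are constants $M\ge 1$ and $\omega\in(\tau(\opA),0)$ with $\norm{\E^{t\opA}}\le M\E^{\omega t}$ for all $t\ge 0$, which makes the integral converge absolutely and forces $\E^{T\opA}\to 0$ as $T\to\infty$.

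With convergence in hand, I would verify the identity by the routine computation $\tfrac{\D}{\D t}\big(\opA^{-1}\E^{t\opA}\big)=\E^{t\opA}$, whence $\int_{0}^{T}\E^{t\opA}\,\D t=\opA^{-1}\big(\E^{T\opA}-\opI\big)$, and letting $T\to\infty$ yields $-\opA^{-1}$. For the positivity step, I would fix $u\in C$. Since $\opA$ is quasi-positive, $\E^{t\opA}\succeq 0$, so $\E^{t\opA}u\in C$ for every $t\ge 0$. Every Riemann sum approximating $\int_{0}^{\infty}\E^{t\opA}u\,\D t$ is a non-negative linear combination of such vectors and hence lies in $C$, because $C$ is a convex cone; passing to the limit places $-\opA^{-1}u$ in the closure of $C$. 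As the relevant cones (in particular $\MR_{d}^{+}$ and $\MS_{d}^{+}$) are closed, this gives $-\opA^{-1}u\in C$, and since $u\in C$ was arbitrary I conclude $-\opA^{-1}\in\pi(C)$, that is $-\opA^{-1}\succeq 0$.

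The step I expect to be the main obstacle is this last one: positivity holds only for each integrand $\E^{t\opA}$ separately, so I must argue that it survives the limit defining the integral. This is exactly where the two consequences of $\tau(\opA)<0$ meet — the exponential decay, which makes the cone-valued Riemann sums converge, and the closedness of the cone, which keeps their limit inside the cone. An alternative route, avoiding the integral, writes $-\opA^{-1}=s^{-1}\sum_{k\ge 0}(\opA+s\opI)^{k}s^{-k}$ for $s>0$ large, where each summand is positive since $\opA+s\opI\in\pi(C)$ and $\pi(C)$ is an algebra cone; there one must instead certify convergence of the Neumann series, which through a Perron--Frobenius argument again reduces to $\tau(\opA)<0$.
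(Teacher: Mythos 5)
Your proof is correct and takes essentially the same route as the paper's: the paper likewise writes $-\opA^{-1}=\int_{0}^{\infty}\E^{s\opA}\,\D s$ (justified there via the spectral mapping theorem, which gives $\E^{t\opA}\to 0$ as $t\to\infty$) and concludes positivity of the integral from quasi-positivity of the integrand. If anything your version is more careful, since you spell out the Riemann-sum and closed-cone argument needed to pass positivity of $\E^{t\opA}u$ through the limit defining the integral, a step the paper leaves implicit.
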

% \begin{proof}
% It follows from the spectral mapping theorem that for every $\opA\in\cL(V)$ with
% $\tau(\opA)<0$ we have $\tau\big(\exp(\opA)\big)<1$. It thus follows that $\E^{\opA t}\to 0$ as
% $t\to\infty$ and therefore we see that
% \begin{align*}
% -\opA^{-1}=\int_{0}^{\infty}\E^{\opA s}\,\D s.
% \end{align*}
% Hence, whenever $\opA$ is quasi-positive, i.e. $\exp(\opA s)\succeq 0$ for every
% $s\geq 0$, we have $\int_{0}^{\infty}\E^{\opA s}\,\D s\succeq 0$ and consequently
% $-\opA^{-1}\succeq 0$. 
% \end{proof}

From the representation~\eqref{eq:bk22-stationary-carma} we see that $X_{t}\in \cK$ for
every $t\in\MR$, whenever the L\'evy process $(L_{t})_{t\in\MR}$ is
$\cK$-increasing and $g(s)=\cC_{q}\E^{s\cA_{p}}E_{p}\in\pi(\cK)$ holds true for every
$s\geq 0$. In the following lemma we prove a
particular form of the Laplace transform of the kernel function $g$. The main
part of the proof, the computation of the transfer function,  is similar to
the matrix representation case in Lemma~\ref{lem:bkk22-right-matrix-description}. 

\begin{lemma}\label{lem:bkk22-stationary-Laplace-transform}
The Laplace transform $\varphi\colon \MRplus\to\cL(\MM_{n,m})$ of the kernel
$g(s)=\cC_{q}\E^{s\cA_{p}}E_{p}$ exists and is given by 
\begin{align}
  \label{eq:bk22-stationary-Laplace-transform}
  \varphi(\lambda)=\opQ(\lambda)\opP(\lambda)^{-1},\quad \lambda\geq 0.
\end{align}  
\end{lemma}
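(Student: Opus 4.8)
The plan is to compute the Laplace transform of $g(s)=\cC_{q}\E^{s\cA_{p}}E_{p}$ directly and recognize the resulting expression as the transfer function $H(\lambda)=\cC_{q}(\lambda\opI_{p}-\cA_{p})^{-1}E_{p}$, which by an operator-level analog of Lemma~\ref{lem:bkk22-right-matrix-description} equals $\opQ(\lambda)\opP(\lambda)^{-1}$. First I would establish existence: since $\tau(\cA_{p})<0$, the spectral mapping theorem (as in the proof of Lemma~\ref{lem:bkk22-inverse-quasi-positive}) gives $\E^{s\cA_{p}}\to 0$ exponentially as $s\to\infty$, so for every $\lambda\geq 0$ the integral $\int_{0}^{\infty}\E^{-\lambda s}\cC_{q}\E^{s\cA_{p}}E_{p}\,\D s$ converges absolutely in $\cL(\MM_{n,m})$. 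Interchanging the bounded operators $\cC_{q}$ and $E_{p}$ with the integral and using $\int_{0}^{\infty}\E^{-\lambda s}\E^{s\cA_{p}}\,\D s=(\lambda\opI_{p}-\cA_{p})^{-1}$ (valid because $\tau(\cA_{p})<0\leq\lambda$ keeps $\lambda$ in the resolvent set) yields $\varphi(\lambda)=\cC_{q}(\lambda\opI_{p}-\cA_{p})^{-1}E_{p}$.

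The core of the argument is then to verify that this transfer function equals $\opQ(\lambda)\opP(\lambda)^{-1}$, and here I would mirror the computation in the proof of Lemma~\ref{lem:bkk22-right-matrix-description}, but now at the level of the operators $\cA_{p},E_{p},\cC_{q}$ rather than their vectorized matrix representations. Concretely, I would solve the operator equation $(\lambda\opI_{p}-\cA_{p})\opF=E_{p}$ for a block column $\opF=(\opF_{1},\ldots,\opF_{p})^{\intercal}$ of operators in $\cL(\MM_{n,m})$. The companion structure of $\cA_{p}$ in~\eqref{eq:bk22-Ap} forces the upper block rows to give the recursion $\lambda\opF_{i}-\opF_{i+1}=\op0$ for $i=1,\ldots,p-1$, hence $\opF_{i}=\lambda^{i-1}\opF_{p}$, while the last block row gives $\lambda\opF_{p}-\sum_{i=1}^{p}\opA_{p+1-i}\opF_{i}=\opI$. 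Substituting the recursion into the last equation and collecting powers of $\lambda$ produces exactly $\opP(\lambda)\,\lambda^{1-p}\opF_{p}=\opI$ by the definition~\eqref{eq:bk22-P} of $\opP$, so $\opF_{p}=\lambda^{p-1}\opP(\lambda)^{-1}$ and thus $\opF_{i}=\lambda^{i-1}\opP(\lambda)^{-1}$. Applying $\cC_{q}=[\opC_{0},\ldots,\opC_{p-1}]$ from~\eqref{eq:bk22-Cq} and using the definition~\eqref{eq:bk22-Q} of $\opQ$ gives $\cC_{q}\opF=\big(\sum_{i=0}^{q}\opC_{i}\lambda^{i}\big)\opP(\lambda)^{-1}=\opQ(\lambda)\opP(\lambda)^{-1}$.

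The main subtlety I anticipate is non-commutativity: unlike scalar coefficients, the operators $\opA_{i},\opC_{j}\in\cL(\MM_{n,m})$ need not commute with each other or with $\lambda\opI$, so the order of multiplication must be tracked carefully throughout the recursion and the final substitution. This is exactly why the output is $\opQ(\lambda)\opP(\lambda)^{-1}$ (right multiplication by the inverse) rather than the left-fraction form; the placement of $\opP(\lambda)^{-1}$ on the right emerges naturally from solving $(\lambda\opI_{p}-\cA_{p})\opF=E_{p}$ on the right-hand side for $\opF$. A secondary point to check is that $\opP(\lambda)$ is indeed invertible for all $\lambda\geq 0$: this follows because $\det$ of the associated polynomial vanishes exactly at the eigenvalues of $\cA_{p}$, all of which have strictly negative real part under $\tau(\cA_{p})<0$, so no real $\lambda\geq 0$ is a root. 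With invertibility secured and the block computation carried out respecting operator order, the identity~\eqref{eq:bk22-stationary-Laplace-transform} follows.
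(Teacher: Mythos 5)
Your proof is correct and takes essentially the same route as the paper's: both write $\varphi(\lambda)=\cC_{q}R(\lambda,\cA_{p})E_{p}$ using the resolvent identity $R(\lambda,\cA_{p})=\int_{0}^{\infty}\E^{-\lambda s}\E^{s\cA_{p}}\,\D s$ (valid since $\tau(\cA_{p})<0\leq\lambda$), then solve the block operator equation $(\lambda\opI_{p}-\cA_{p})\bF=E_{p}$ via the companion structure of $\cA_{p}$ to get $F_{i}=\lambda^{i-1}\opP(\lambda)^{-1}$, and apply $\cC_{q}$ to conclude $\varphi(\lambda)=\opQ(\lambda)\opP(\lambda)^{-1}$. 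Only a harmless slip: the recursion $\lambda F_{i}=F_{i+1}$ gives $F_{i}=\lambda^{-(p-i)}F_{p}$ (equivalently $F_{i}=\lambda^{i-1}F_{1}$), not $F_{i}=\lambda^{i-1}F_{p}$ as stated mid-argument; your subsequent substitution and final conclusion already use the correct relation, so nothing breaks.
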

\begin{proof}
  Since $\tau(\cA_{p})<0$ we see that for $\lambda\geq 0$ the resolvent
  $R(\lambda,\cA_{p})=(\lambda\opI-\cA_{p})^{-1}$ is given by the Laplace
  transform of the matrix semigroup $\E^{t\cA_{p}}$, i.e. $R(\lambda,
  \cA_{p})=\int_{0}^{\infty}\E^{-\lambda s}\E^{s\cA_{p}}\,\D s$. 
  We thus compute
  \begin{align}\label{eq:bk22-stationary-Laplace-transform-1}
   \varphi(\lambda)=\int_{0}^{\infty}\E^{-\lambda s}g(s)\,\D
  s=\cC_{q}\left(\int_{0}^{\infty}\E^{-\lambda
                     s}\E^{s\cA_{p}}\,\D s\right)E_{p}=\cC_{q} R(\lambda,\cA_{p})E_{p}.
  \end{align}
  For $\lambda\in\MC$ we compute the term $R(\lambda,\cA_{p})E_{p}$ as
  follows: Let $\bF_{p}\df[F_{1},F_{2},\ldots,F_{p}]^{\intercal}$ in $\cL(\MM_{n,m},(\MM_{n,m})^{p})$ with
  $F_{i}\in\cL(\MM_{n,m})$ and such that $\bF_{p}$ applied to $x$ satisfies
  $\bF_{p}(x)=(F_{1}x,F_{2}x,\ldots,F_{p}x)^{\intercal}$. Moreover, set $\opA\df
  [\opA_{p},\opA_{p-1},\ldots,\opA_{1}]$ and consider $ (\lambda
  \opI-\cA_{p})\bF_{p}=E_{p}$, which is equivalent to
 \begin{align*}
  [\lambda F_{1}-F_{2},\lambda F_{2}-F_{3},\ldots,\lambda F_{p-1}-F_{p},
   \lambda F_{p}-\opA\bF_{p}]=[0,\ldots,\opI],.
 \end{align*}
 Solving for $\bF_{p}$ yields $F_{1}=\lambda^{-1}F_{2}$, $F_{2}=\lambda^{-1}F_{3},\ldots
 ,F_{p-1}=\lambda^{-1}F_{p}$ and thus for every $i=1,\ldots,p-1$ we have
 $F_{i}=\lambda^{-(p-i)}F_{p}$. Moreover, the last equation reads as
 \begin{align*}
   \big(\lambda^{p}\opI-\opA_{p}-\opA_{p-1}\lambda-\ldots-\opA_{1}\lambda^{p-1}\big)\lambda^{-(p-1)}F_{p}=\opI, 
 \end{align*}
 and hence by definition of $\opP(\lambda)$ we see that
 $F_{p}=\lambda^{p-1}\opP(\lambda)^{-1}$ and therefore
 $F_{i}=\lambda^{i-1}\opP(\lambda)^{-1}$ for $i=1,\ldots,p-1$. In vector
 notation this means
 \begin{align*}
   \bF_{p}=[\opI\circ\opP(\lambda)^{-1},\lambda
   \opI\circ\opP(\lambda)^{-1},\ldots,\lambda^{p-1}\opI\circ\opP(\lambda)^{-1}]= (1,
   \lambda,\ldots,\lambda^{p-1})\otimes \opP(\lambda)^{-1}.
 \end{align*}
 Thus inserting $\bF_{p}=R(\lambda,\cA_{p})E_{p}$ back
 into~\eqref{eq:bk22-stationary-Laplace-transform-1} and by the definition of
 $\mathbf{Q}$ in~\eqref{eq:bk22-Q}, we obtain
 \begin{align*}
   \cC_{q}\bF_{p}=\opC_{0}\opP(\lambda)^{-1}+\lambda \opC_{1}\opP(\lambda)^{-1}+\ldots+\lambda^{p-1}\opC_{p-1}\opP(\lambda)^{-1}=\opQ(\lambda)\opP(\lambda)^{-1}.
 \end{align*}
\end{proof}

Next, we introduce the fundamental property of the Laplace transforms of the
kernel function $s\mapsto g(s)$ that will ensure the cone-invariance of the
associated causal MCARMA processes. The following definition is adapted from~\cite[Definition 5.4]{Are84}.
\begin{definition}
We call a function $f\colon \MRplus\to \cL(\MM_{n,m})$ \emph{completely
  monotone} with respect to $\pi(\cK)$, if $f$ is infinitely often
differentiable and $(-1)^{n}f^{(n)}(\lambda)\succeq 0$ for all $\lambda>0$ and
$n\in\MN_{0}$.
\end{definition}

The following theorem is our main result on the cone-invariance of
causal matrix-valued MCARMA processes, which builds on the notion of completely
monotone functions. 

\begin{theorem}\label{thm:bk22-positive-stationary}
  Let $(X_{t})_{t\in\MR}$ be an $\MM_{n,m}$-valued causal MCARMA process of order $(p,q)$
  given by~\eqref{eq:bk22-stationary-carma}. Moreover, let $\opP(\lambda)$ and
  $\opQ(\lambda)$ be the operator polynomials in~\eqref{eq:bk22-P}
  and~\eqref{eq:bk22-Q}, respectively. Then the following holds true:
  \begin{theoremenum}
  \item\label{item:bk22-positive-stationary-1} $(X_{t})_{t\in\MR}$ is $\cK$-valued if
    and only if the map $\lambda\mapsto \opQ(\lambda)\opP(\lambda)^{-1}$ is
    completely monotone with respect to $\pi(\cK)$.
  \item\label{item:bk22-positive-stationary-2} If $\lambda\mapsto\opQ(\lambda)$ is
    completely monotone with respect to $\pi(\cK)$ and $\opP(\lambda)$ can be
    decomposed into linear factors as follows: 
    \begin{align}\label{eq:bk22-decomposition-linear}
      \opP(\lambda)=\prod_{i=1}^{p}(\lambda \opI- \hat{\opA}_{i}), 
    \end{align}
    where for all $i=1,\ldots,p$ the operator $\hat{\opA}_{i}\in\cL(\MM_{n,m})$ is
    quasi-positive and $\tau(\hat{\opA}_{i})<0$, then $(X_{t})_{t\in\MR}$ is
    $\cK$-valued.     
  \end{theoremenum}
\end{theorem}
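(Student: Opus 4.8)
The plan is to reduce the entire statement to a single property of the kernel function $g(s)=\cC_q\E^{s\cA_p}E_p$, namely that $g(s)\in\pi(C)$ for every $s\geq 0$, and to recognize this, via the Laplace transform computed in Lemma~\ref{lem:bkk22-stationary-Laplace-transform}, as complete monotonicity of $\varphi(\lambda)=\opQ(\lambda)\opP(\lambda)^{-1}$. First I would dispatch the easy half of \cref{item:bk22-positive-stationary-1}: if $g(s)\in\pi(C)$ for all $s\geq 0$, then approximating the integral in~\eqref{eq:bk22-stationary-carma} by Riemann--Stieltjes sums $\sum_i g(t-s_i)\,(L_{s_{i+1}}-L_{s_i})$, whose summands lie in $C$ because each $g(t-s_i)\in\pi(C)$ preserves $C$ while the increments $L_{s_{i+1}}-L_{s_i}$ lie in $C$, and passing to the limit in the closed cone $C$, yields $X_t\in C$ almost surely.

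To link kernel positivity with complete monotonicity I would invoke a vector-valued Bernstein--Widder representation for completely monotone functions with values in the cone $\pi(C)$ (cf.~\cite{Are84}). On the one hand, if $g(s)\in\pi(C)$ for all $s\geq 0$, differentiating under the integral sign gives $(-1)^{k}\varphi^{(k)}(\lambda)=\int_0^{\infty}s^{k}\E^{-\lambda s}g(s)\,\D s$, an integral of elements of the closed cone $\pi(C)$, whence $(-1)^{k}\varphi^{(k)}(\lambda)\succeq 0$ for all $k\in\MN_0$ and $\lambda>0$, i.e. $\varphi$ is completely monotone. Conversely, if $\varphi$ is completely monotone with respect to $\pi(C)$, Bernstein's theorem represents it as the Laplace transform of a $\pi(C)$-valued measure $\mu$ on $\MRplus$; since by Lemma~\ref{lem:bkk22-stationary-Laplace-transform} the continuous function $g$ has the same Laplace transform, uniqueness forces $\D\mu(s)=g(s)\,\D s$, so that $g(s)\in\pi(C)$ for almost every, and by continuity every, $s\geq 0$.

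The delicate half of \cref{item:bk22-positive-stationary-1} is the necessity $X_t\in C\Rightarrow g(s)\in\pi(C)$, which I expect to be the main obstacle. The idea is to read off the action of $g$ from the jumps of the driving noise: a jump of $L$ of size $u$ at time $r$ contributes the term $g(t-r)\,u$ to $X_t$, so by isolating a single jump---conditioning on one large jump of size near $u\in\operatorname{supp}\nu_L$ in a shrinking time window while controlling the remaining integral---and using that $C$ is closed, positivity of $X_t$ at $t-r=s$ forces $g(s)\,u\in C$. Ranging over the directions $u$ supplied by the jump measure, which (as is implicit here, and explicit when one quantifies over all $C$-valued noises) generate the cone $C$, gives $g(s)(C)\subseteq C$, that is $g(s)\in\pi(C)$, for every $s\geq 0$; the technical heart is this isolation argument, which may alternatively be phrased through a support theorem for the stationary law of $X$.

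For \cref{item:bk22-positive-stationary-2} I would show directly that $\varphi$ is completely monotone and then quote \cref{item:bk22-positive-stationary-1}. Under the factorization~\eqref{eq:bk22-decomposition-linear} one has $\opP(\lambda)^{-1}=(\lambda\opI-\hat\opA_p)^{-1}\cdots(\lambda\opI-\hat\opA_1)^{-1}$, and since each $\hat\opA_i$ is quasi-positive with $\tau(\hat\opA_i)<0$, the resolvent $(\lambda\opI-\hat\opA_i)^{-1}=\int_0^{\infty}\E^{-\lambda s}\E^{s\hat\opA_i}\,\D s$ (convergent for $\lambda\geq 0$ by Lemma~\ref{lem:bkk22-inverse-quasi-positive} and the spectral bound) is the Laplace transform of the $\pi(C)$-valued family $\E^{s\hat\opA_i}\succeq 0$, hence completely monotone. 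It then remains to observe that a product of completely monotone functions is completely monotone: by the Leibniz rule $(-1)^{n}(fh)^{(n)}=\sum_{k=0}^{n}\binom{n}{k}\big[(-1)^{k}f^{(k)}\big]\big[(-1)^{n-k}h^{(n-k)}\big]$, and each summand is a product of elements of $\pi(C)$, which lies in $\pi(C)$ because $\pi(C)$ is an algebra cone; no commutativity is needed. Applying this to the $p$ resolvent factors shows $\opP^{-1}$ is completely monotone, and then to $\opQ$ and $\opP^{-1}$ shows $\varphi=\opQ\opP^{-1}$ is completely monotone, so \cref{item:bk22-positive-stationary-1} gives that $(X_t)_{t\in\MR}$ is $C$-valued.
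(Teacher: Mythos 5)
Your proposal follows essentially the same route as the paper's proof: part \cref{item:bk22-positive-stationary-1} rests on Lemma~\ref{lem:bkk22-stationary-Laplace-transform} (the Laplace transform of the kernel $g(s)=\cC_{q}\E^{s\cA_{p}}E_{p}$ equals $\opQ(\lambda)\opP(\lambda)^{-1}$) combined with Arendt's vector-valued Bernstein theorem, and part \cref{item:bk22-positive-stationary-2} on factoring $\opP(\lambda)^{-1}$ into resolvents $(\lambda\opI-\hat{\opA}_{i})^{-1}$, each completely monotone by quasi-positivity and $\tau(\hat{\opA}_{i})<0$ (the paper differentiates the resolvent and uses Lemma~\ref{lem:bkk22-inverse-quasi-positive} plus the algebra-cone property, where you use the Laplace representation of the resolvent --- both fine), followed by the same Leibniz-rule argument for products of completely monotone functions. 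The one substantive divergence is the ``only if'' half of part \cref{item:bk22-positive-stationary-1}: the paper's proof consists entirely of the equivalence ``$g(s)\in\pi(C)$ for all $s\geq 0$ $\Leftrightarrow$ $\opQ\opP^{-1}$ completely monotone'' and tacitly identifies ``$(X_{t})$ is $C$-valued'' with ``$g(s)\in\pi(C)$ for all $s$'', never separately arguing that positivity of the process forces positivity of the kernel. You correctly flag this as the delicate step and sketch a jump-isolation argument; note, however, that your sketch (as you acknowledge) only yields $g(s)u\in C$ for $u$ in the cone generated by $\operatorname{supp}\nu_{L}$ and the drift, so the implication holds only if that cone is all of $C$ or if one quantifies over all $C$-valued noises --- for a degenerate noise such as $L\equiv 0$ the necessity fails outright. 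In this respect your treatment is more careful than the paper's, though the technical heart of the isolation argument remains a sketch rather than a proof.
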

\begin{proof}
  By Lemma~\ref{lem:bkk22-stationary-Laplace-transform} the Laplace transform $\phi$
  of the kernel $g(s)=\cC_{q}\E^{s\cA_{p}}E_{p}$ is given by the operator
  rational function $\lambda\mapsto \opQ(\lambda)\opP(\lambda)^{-1}$. By a
  vector valued version of Bernstein's theorem, see \cite[Theorem 5.5]{Are84},
  and Lemma~\ref{lem:bkk22-stationary-Laplace-transform} it follows that $s\mapsto
  g(s)$ is positive. Indeed, by Bernstein's theorem we have $g(s)\in\pi(\cK)$ for all
  $s\geq 0$ if and only if its Laplace transform is completely monotone with
  respect to $\pi(\cK)$, but since its Laplace transform is given by
  $\lambda\mapsto\opQ(\lambda)\opP(\lambda)^{-1}$
  \cref{item:bk22-positive-stationary-1} follows.\\
  For the second statement~\cref{item:bk22-positive-stationary-2}, note that from
  part i) it follows that the MCARMA process $(X_{t})_{t\geq 0}$ is $\cK$-valued if and
  only if $\lambda\mapsto \opQ(\lambda)\opP(\lambda)^{-1}$ is completely
  monotone with respect to $\pi(\cK)$. Note further that for every
  $i=1,\ldots,p$ the resolvent $R(\lambda,\hat{\opA}_{i})$ exists for every
  $\lambda>0$ and is completely monotone with respect to $\pi(\cK)$. Indeed, by
  assumption $\hat{\opA}_{i}$ is quasi-positive and
  $\tau(\hat{\opA}_{i})<0$. This implies that also
  $-\lambda\opI+\hat{\opA}_{i}$ is quasi-positive and
  $\tau(-\lambda\opI+\hat{\opA}_{i})<0$ for every $\lambda\geq 0$. Indeed,
  note that the operator $-\lambda \opI$ is always quasi-positive for
  $\lambda>0$, since whenever $\langle
  u,v\rangle_{n,m}=0$ we have $\langle -\lambda \opI u,v\rangle_{n,m}=-\lambda \langle
  u,v\rangle_{n,m}=0$. By an application of
  Lemma~\ref{lem:bkk22-inverse-quasi-positive} it follows that $(\lambda
  \opI-\hat{\opA}_{i})^{-1}\succeq 0$. Moreover, for every $n\in\MN$ we have
  \begin{align}\label{eq:bk22-prop:bk22-positive-stationary-proof-1}
    (-1)^{n}\frac{\D^{n} }{\D \lambda^{n}}(\lambda \opI-\hat{\opA}_{i})^{-1}=(\lambda
    \opI-\hat{\opA}_{i})^{-(1+n)},
  \end{align}
  where the right-hand side of~\eqref{eq:bk22-prop:bk22-positive-stationary-proof-1} is
  again positive, since $\pi(\cK)$ is an algebra cone. It thus follows that for
  every $i=1,\ldots,p$ the linear factor $(\lambda\opI-\hat{\opA}_{i})^{-1}$
  in the decomposition~\eqref{eq:bk22-decomposition-linear} is completely monotone
  and by assumption $\opQ$ is completely monotone as well. Now the assertion
  follows since the product of completely monotone functions is again
  completely monotone (use the general Leibniz's rule here) and
  hence~\cref{item:bk22-positive-stationary-2} follows from part i).   
\end{proof}

\begin{remark}
  \begin{enumerate}
  \item[i)] In the univariate case an analog of~\cref{item:bk22-positive-stationary-1} was shown
    in~\cite[Theorem 2]{TC05}. Here we extend the result to the
    multivariate setting. However, it is to be noted that the result can be
    extended even beyond finite-dimensions. In fact, also for certain
    Hilbert-valued CARMA processes as introduced in~\cite{BS18} a similar
    characterization can be shown and will be discussed in future work.
  \item[ii)] The factorization of operator polynomials into the
    form~\eqref{eq:bk22-decomposition-linear} is well studied in the
    literature, see, e.g.~\cite{Lei86} and in particular for matrix
    polynomials \cite{GLR09}. For instance, a sufficient criteria for $\opP(\lambda)$ to admit
    a factorization of the form~\eqref{eq:bk22-decomposition-linear} is that
    the transition operator $\cA_{p}$ is diagonizable. If an operator
    polynomial is factorizable, then the operators $\hat{\opA}_{i}$ can be
    computed by iterated operator division and the additional positivity conditions can be checked thereafter.
  \end{enumerate}
\end{remark}

The strength of~\cref{item:bk22-positive-stationary-2}, however, lays in the fact that it
provides us with a simple method to construct cone valued stationary MCARMA
processes by choosing suitable operators $\hat{\op{A}}_{i}$ for
$i=1,\ldots,p$. This is explained in the following example by means of a second
order MCARMA process: 

\begin{example}\label{ex:positive-MCAR}
  Let $n=m=d$ for some $d\in\MN$, $p=2$ and $\opQ(\lambda)=\opI$. Moreover, let
  $\hat{\opA}_{1},\hat{\opA}_{2}\in\cL(\MM_{d})$ be quasi-positive with
  $\tau(\hat{\opA}_{1}),\tau(\hat{\opA}_{2})<0$. For example, we could choose
  $\hat{\opA}_{i}x\df \hat{A}_{i}x+x\hat{A}_{i}^{*}$ for some matrix $A_{i}\in
  \MM_{d}$ with $\tau(\hat{A}_{i})<0$ for $i=1,2$. Note that in this case for
  $i=1,2$ we have
  $\sigma(\hat{\opA}_{i})=\sigma(\hat{A}_{i})+\sigma(\hat{A}_{i})$,
  see~\cite{Ros56}. If we set $\opA_{1}\df \hat{\opA}_{1}+\hat{\opA}_{2}$ and
  $\opA_{2}\df\hat{\opA}_{1}\hat{\opA}_{2}$, we see that 
  \begin{align*}
   \opP(\lambda)=(\lambda\opI-\hat{\opA}_{1})(\lambda\opI-\hat{\opA}_{2})=\lambda^{2}\opI-\lambda\opA_{1}-\opA_{2}. 
  \end{align*}
  Hence following \cref{item:bk22-positive-stationary-2} and due to the fact
  that $\tau(\cA_{2})=\tau(\opA_{2})$, this specification gives rise to a
  $\cK$-valued causal MCAR process of order $p=2$, whenever
  $\tau(\opA_{2})=\tau(\hat{\op{A}}_{1}\hat{\op{A}}_{2})<0$.      
\end{example}

\vspace{3mm}

Note that the conditions in \cref{item:bk22-positive-stationary-2} are not necessary
for causal MCARMA processes to be cone valued. Indeed, in some situations
we can check the conditions of \cref{item:bk22-positive-stationary-1} directly as
illustrated by the following example:

\begin{example}
  Let $p=2$, $\opC_{0}\in \pi(\cK)$ and $\opA\in\cL(\MM_{n,m})$ be invertible and
  such that $-\opA^{2}$ is quasi-positive. If we define
  \begin{align*}
  \cA_{2}\df
    \begin{bmatrix}
      \op0 & \opI \\
      -\opA^{2}& \op0
    \end{bmatrix},\quad \text{and} \quad  \cC_{0}\df [\opC_{0},\op0],
  \end{align*}
  then $\tau(\cA_{2})=\tau(-\opA^{2})<0$ and the associated operator
  polynomials are $\opP(\lambda)=\lambda^{2}\opI+\opA^{2}$ and
  $\opQ(\lambda)=\opC_{0}$. We thus see that $\lambda\mapsto
  \opQ(\lambda)\opP(\lambda)^{-1}$ is completely monotone, although no
  factorization of the form in~\eqref{eq:bk22-decomposition-linear} is available.
\end{example}

In the following Proposition~\ref{prop:bk22-CARMA-positive} we extend another sufficient
positivity criteria, known in the univariate case in~\cite[Theorem 1
e)]{TC05}, to $\MR_{d}$-valued MCARMA processes. In
order to this, we adapt the proof of \cite[Theorem 1]{Bal94} to
$\pi(\MR_{d}^{+})$-valued rational functions. Unsurprisingly, this
multivariate version is more involved and requires some additional and rather
technical assumptions.      

\begin{proposition}\label{prop:bk22-CARMA-positive}
  Let  $p\in\MN$ and $q\in\MN_{0}$ such that $q<p$ and let $(X_{t})_{t\geq 0}$
  be an $\MR_{d}$-valued causal MCARMA given by~\eqref{eq:bk22-stationary-carma}
  with parameters $(\cA_{p},E_{p},\cC_{q},L)$. Let $P(\lambda)$ and $Q(\lambda)$
  denote the matrix polynomials in~\eqref{eq:bk22-P} and~\eqref{eq:bk22-Q},
  respectively, and assume that 
  $\set{\hat{C}_{i}\in\MM_{d}\colon i=0,\ldots,q}$ and
  $\set{\hat{A}_{i}\in\MM_{d}\colon i=1,\ldots,p}$ are two commutative
  families of positive matrices such that
  \begin{align}\label{eq:bk22-positive-stationary}
    Q(\lambda)=\prod_{i=0}^{q}(\lambda \opI-\hat{C}_{i})\quad\text{and}\quad
    P(\lambda)=\prod_{i=1}^{p}(\lambda \opI-\hat{A}_{i}). 
  \end{align}
  Moreover, assume that for every $\lambda>0$  we have
  $Q(\lambda)P(\lambda)^{-1}=P(\lambda)^{-1}Q(\lambda)$, the matrix
  logarithms $\log(P(\lambda))$ and $\log(Q(\lambda))$ exist in $\MM_{d}$ and
  for every $i=0,\ldots,q$ there exist
  $l^{(i,1)},l^{(i,2)},\ldots,l^{(i,q+1)}\in\pi(\MR_{d}^{+})$ such that
  for every $n\in\MN$ we have $(l^{(i,j+1)})^{n}\preceq l^{(i,j+1)}$ for
  all $j=0,\ldots,q$, 
  \begin{align}\label{eq:bk22-bistochastic-smaller-one}
    \hat{C}_{i}\preceq \sum_{j=0}^{q}l^{(i,j+1)}\odot\hat{A}_{j+1}\quad\text{and}\quad\sum_{i=0}^{q}l^{(i,j+1)}=\one_{d}.
  \end{align}
  If $p>q+1$, we assume in addition that $\tau(\hat{A}_{i})<0$ for all
  $i=q+2,\ldots,p$. Then $(X_{t})_{t\geq 0}$ is an $\MR_{d}^{+}$-valued causal
  MCARMA process of order $(p,q)$ with associated moving average polynomial
  $Q$ and autoregressive polynomial $P$.  
\end{proposition}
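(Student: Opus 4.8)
The plan is to prove the assertion by reducing it, via \cref{item:bk22-positive-stationary-1}, to the statement that the operator rational function $R(\lambda)\df Q(\lambda)P(\lambda)^{-1}$ is completely monotone with respect to $\pi(\MR_{d}^{+})$; the representation \eqref{eq:bk22-stationary-carma} then forces $X_{t}\in\MR_{d}^{+}$ for all $t$. Since the families $\set{\hat{C}_{i}}$ and $\set{\hat{A}_{j}}$ commute and $Q(\lambda)P(\lambda)^{-1}=P(\lambda)^{-1}Q(\lambda)$, all factors in \eqref{eq:bk22-positive-stationary} commute with each other and with $R(\lambda)$, so I may manipulate them essentially as scalars, following the scalar computation of \cite[Theorem 1]{Bal94}. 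Causality ($\tau(\cA_{p})<0$) places every root of $\det P$ in the open left half-plane, which guarantees that the resolvent (Neumann) expansions used below converge and represent the relevant functions on $(0,\infty)$. First I would peel off the excess autoregressive factors: when $p>q+1$, each $(\lambda\opI-\hat{A}_{i})^{-1}$ with $i=q+2,\ldots,p$ is completely monotone because $\hat{A}_{i}$ is quasi-positive and $\tau(\hat{A}_{i})<0$, so the argument of \cref{item:bk22-positive-stationary-2} (through Lemma~\ref{lem:bkk22-inverse-quasi-positive} and the resolvent-derivative identity) applies verbatim. As products of completely monotone functions are completely monotone, it remains to treat the \emph{balanced} part $R_{0}(\lambda)\df\prod_{i=0}^{q}(\lambda\opI-\hat{C}_{i})\prod_{j=1}^{q+1}(\lambda\opI-\hat{A}_{j})^{-1}$, which satisfies $R_{0}(\lambda)\to\opI$ as $\lambda\to\infty$.

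For $R_{0}$ I would exploit the assumed existence of the matrix logarithms: by commutativity, $\log R_{0}(\lambda)=\log Q(\lambda)-\log\bigl(\prod_{j=1}^{q+1}(\lambda\opI-\hat{A}_{j})\bigr)$, and differentiating gives the logarithmic derivative
\begin{align*}
-\bigl(\log R_{0}\bigr)'(\lambda)=\sum_{j=1}^{q+1}(\lambda\opI-\hat{A}_{j})^{-1}-\sum_{i=0}^{q}(\lambda\opI-\hat{C}_{i})^{-1}.
\end{align*}
Expanding each resolvent as $(\lambda\opI-M)^{-1}=\sum_{n\geq0}M^{n}\lambda^{-(n+1)}$ and recalling that each $\lambda^{-(n+1)}$ is completely monotone, the complete monotonicity of $-(\log R_{0})'$ with respect to $\pi(\MR_{d}^{+})$ reduces to the positivity of every Neumann coefficient, i.e.\ to the matrix inequalities
\begin{align*}
\sum_{i=0}^{q}\hat{C}_{i}^{\,n}\preceq\sum_{j=1}^{q+1}\hat{A}_{j}^{\,n},\quad n\in\MN_{0}
\end{align*}
(the case $n=0$ being the trivial identity $(q+1)\opI=(q+1)\opI$). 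I would obtain these from the finer, weight-dependent bounds
\begin{align*}
\hat{C}_{i}^{\,n}\preceq\sum_{j=0}^{q}l^{(i,j+1)}\odot\hat{A}_{j+1}^{\,n},\quad i=0,\ldots,q,\ n\in\MN_{0},
\end{align*}
by summing over $i$ and collapsing the weights through the partition-of-unity relation $\sum_{i=0}^{q}l^{(i,j+1)}=\one_{d}$ from \eqref{eq:bk22-bistochastic-smaller-one} together with $\one_{d}\odot\hat{A}_{j+1}^{\,n}=\hat{A}_{j+1}^{\,n}$.

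The main obstacle is establishing the weighted inequalities for all $n$, and this is precisely where the three technical hypotheses in \eqref{eq:bk22-bistochastic-smaller-one} are consumed. For $n=1$ the bound is the standing assumption $\hat{C}_{i}\preceq\sum_{j}l^{(i,j+1)}\odot\hat{A}_{j+1}$; I would then argue by induction on $n$, writing $\hat{C}_{i}^{\,n}=\hat{C}_{i}\,\hat{C}_{i}^{\,n-1}$, bounding the two factors by the base case and the inductive hypothesis using the entrywise positivity and commutativity of the families, and finally collapsing the resulting cross terms by means of the sub-idempotency bounds $(l^{(i,j+1)})^{n}\preceq l^{(i,j+1)}$. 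Controlling the interaction between the \emph{ordinary} matrix products coming from the Neumann expansion and the \emph{Hadamard} products coming from the weights $l^{(i,j+1)}$ is the delicate step, and it is here that the passage from the scalar criterion of \cite[Theorem 1]{Bal94} to the $\pi(\MR_{d}^{+})$-valued setting genuinely requires the extra commutativity, logarithm, and sub-idempotency hypotheses. Once the coefficient inequalities are in hand, $-(\log R_{0})'$ is completely monotone and $\pi(\MR_{d}^{+})$-valued, so $R_{0}$ is logarithmically completely monotone; since $R_{0}(\lambda)\to\opI$ as $\lambda\to\infty$, this yields that $R_{0}$ itself is completely monotone with respect to $\pi(\MR_{d}^{+})$. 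Multiplying back the excess resolvent factors from the first step and invoking \cref{item:bk22-positive-stationary-1} then shows that $(X_{t})_{t\geq0}$ is an $\MR_{d}^{+}$-valued causal MCARMA process of order $(p,q)$ with moving-average polynomial $Q$ and autoregressive polynomial $P$.
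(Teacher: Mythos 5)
Your proof skeleton coincides with the paper's (reduce to complete monotonicity of $\lambda\mapsto Q(\lambda)P(\lambda)^{-1}$ via \cref{item:bk22-positive-stationary-1}, peel off the factors $(\lambda\opI-\hat{A}_{i})^{-1}$, $i=q+2,\ldots,p$, via \cref{item:bk22-positive-stationary-2}, then control the logarithm of the balanced part), but the step carrying all the weight is different from the paper's, and it fails. The paper never expands resolvents into powers: it represents $\log\big(Q(\lambda)\check{P}(\lambda)^{-1}\big)$ as the Frullani-type Laplace transform \eqref{eq:bk22-CARMA-positive-sum} and proves positivity of the \emph{kernel} $s\mapsto\sum_{i=0}^{q}\big(\E^{s\hat{A}_{i+1}}-\E^{s\hat{C}_{i}}\big)$, using monotonicity of the matrix exponential, the chain \eqref{eq:bk22-CARMA-positive-sum-3} (which rests on Lemma~\ref{lem:hadamard-product} and sub-idempotency of the weights), and the partition of unity in \eqref{eq:bk22-bistochastic-smaller-one}; Bernstein's theorem then gives complete monotonicity. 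You instead reduce to positivity of every Neumann coefficient, i.e.\ to $\sum_{i}\hat{C}_{i}^{\,n}\preceq\sum_{j}\hat{A}_{j+1}^{\,n}$ for all $n$, which is strictly stronger than the kernel inequality and is false in exactly the regime the proposition lives in. Since $(X_{t})$ is \emph{causal} and given by \eqref{eq:bk22-stationary-carma}, the zeros of $\det P$, i.e.\ the spectra of the $\hat{A}_{i}$, lie in the open left half-plane (so, by Perron--Frobenius, the $\hat{A}_{i}$ cannot be entrywise nonnegative, and even powers flip signs). Already the scalar instance $d=1$, $p=q+1=1$, $\hat{A}_{1}=-1$, $\hat{C}_{0}=-2$, $l^{(0,1)}=1$ satisfies the displayed hypotheses and the conclusion: the kernel $\E^{-s}-\E^{-2s}$ is nonnegative and $(\lambda+2)/(\lambda+1)$ is completely monotone; but your coefficient inequalities $\hat{C}_{0}^{\,n}\leq\hat{A}_{1}^{\,n}$ read $(-2)^{n}\leq(-1)^{n}$ and fail for every even $n$ — equivalently, the Neumann coefficients $(-1)^{n}-(-2)^{n}$ of $-(\log R_{0})'(\lambda)=(\lambda+1)^{-1}-(\lambda+2)^{-1}$ are negative for even $n$. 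So kernel positivity does not decompose power-by-power, and your route cannot prove the statement. Independently, the induction you sketch for $\hat{C}_{i}^{\,n}\preceq\sum_{j}l^{(i,j+1)}\odot\hat{A}_{j+1}^{\,n}$ cannot be completed as described: writing $\hat{C}_{i}^{\,n}=\hat{C}_{i}\hat{C}_{i}^{\,n-1}$ and inserting both bounds produces cross terms $\big(l^{(i,j+1)}\odot\hat{A}_{j+1}\big)\big(l^{(i,k+1)}\odot\hat{A}_{k+1}^{\,n-1}\big)$ with $j\neq k$, and neither sub-idempotency nor Lemma~\ref{lem:hadamard-product} (both of which only control powers of a \emph{single} Hadamard product) collapses ordinary products of differently weighted matrices back into the required form.

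A secondary but also genuine error is the analytic justification. The expansion $(\lambda\opI-M)^{-1}=\sum_{n\geq0}M^{n}\lambda^{-(n+1)}$ converges only for $\lambda>\rho(M)$, the spectral \emph{radius}; this is not implied by the spectrum lying in the open left half-plane (for $M=-10\,\opI$ one has $\tau(M)=-10$ yet the series diverges for $\lambda<10$). Moreover, positivity of the coefficients of an expansion at infinity yields the inequalities $(-1)^{k}f^{(k)}\succeq0$ only on $(\rho,\infty)$, whereas the complete monotonicity demanded by \cref{item:bk22-positive-stationary-1} must hold on all of $(0,\infty)$: the scalar function $1/(\lambda-1)$ is the Laplace transform of the positive kernel $\E^{s}$ for $\lambda>1$ yet is negative on $(0,1)$. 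The paper's argument is structured precisely to avoid both issues, by exhibiting $\log\big(Q(\lambda)\check{P}(\lambda)^{-1}\big)$ directly as the Laplace transform of a cone-valued kernel rather than manipulating termwise expansions of the resolvents.
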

\begin{proof}
  Following~\cref{item:bk22-positive-stationary-1}, it suffices to show that
  $\lambda\mapsto Q(\lambda)P(\lambda)^{-1}$ is completely monotone on
  $\MRplus$ with respect to the cone
  $\pi(\MR_{d}^{+})$. By~\eqref{eq:bk22-positive-stationary} we have 
  \begin{align*}
   Q(\lambda)P(\lambda)^{-1}=
    \left(\prod_{i=0}^{q}(\lambda \opI-
    \hat{C}_{i})\right)\left(\prod_{i=1}^{p}(\lambda \opI-
    \hat{A}_{i})\right)^{-1},\quad \forall\lambda>0.
  \end{align*}
  Since the multiplication of monotone functions is again completely monotone it
  suffices to show the complete monotonicity of $\lambda\mapsto
  Q(\lambda)\check{P}(\lambda)^{-1}$ for 
  \begin{align*}
    \check{P}(\lambda)=\prod_{i=1}^{q}(\lambda \opI-\hat{A}_{i}),\quad\lambda\in\MC,   
  \end{align*}
  since by~\cref{item:bk22-positive-stationary-2} we conclude that the maps
  $(\lambda\opI-\hat{A}_{i})^{-1}$ are completely monotone for all $i=q+2,\ldots,p$. Moreover, it
  suffices to prove that the map
  $\lambda\mapsto\log\big(Q(\lambda)\check{P}(\lambda)^{-1}\big)$ is
  completely monotone, since for all $\lambda>0$ we have 
  \begin{align}\label{eq:bk22-CARMA-positive-sum-1}
  Q(\lambda)\check{P}(\lambda)^{-1}=\exp\big(\log(Q(\lambda)\check{P}(\lambda)^{-1})\big)=\sum_{n=1}^{\infty}\frac{\big(\log(Q(\lambda)\check{P}(\lambda)^{-1})\big)^{n}}{n!}.   
  \end{align}
  By assumption we know that the matrix logarithms of $Q(\lambda)$ and
  $\check{P}(\lambda)^{-1}$ exist for every $\lambda>0$ and moreover the matrices
  $\hat{C}_{0},\ldots,\hat{C}_{q}$, the matrices
  $\hat{A}_{1},\ldots,\hat{A}_{q+1}$ and the matrix rational function
  $Q(\lambda)$ and $\check{P}(\lambda)^{-1}$ mutually commute, thus we find:
  \begin{align}
    \log\big(Q(\lambda)\check{P}(\lambda)^{-1}\big)&=\log\big(Q(\lambda)\big)-\log\big(\check{P}(\lambda)\big)\nonumber\\
                                   &=\sum_{j=0}^{q}\log\big(\lambda
                                     \opI-\hat{C}_{j}\big)-\sum_{i=1}^{q+1}\log\big(\lambda
                                     \opI-\hat{A}_{i}\big)\nonumber\\
                                   &=\int_{0}^{\infty}\E^{-\lambda
                                     s}s^{-1}\sum_{i=0}^{q}(\E^{s\hat{A}_{i+1}}-\E^{s\hat{C}_{i}})\D
                                     s. \label{eq:bk22-CARMA-positive-sum}
  \end{align}
  From~\eqref{eq:bk22-CARMA-positive-sum} we see that it suffices to show that
  $\sum_{i=0}^{q}(\E^{s\hat{A}_{i+1}}-\E^{s\hat{C}_{i}})\succeq 0$ holds for all $s\geq
  0$. For this note that the matrix exponential is monotone with respect
  to $\pi(\MR_{d}^{+})$, i.e. for $G_{1},G_{2}\in\MM_{d}$ such that
  $G_{1}\preceq G_{2}$ we have $\E^{G_{1}}\preceq \E^{G_{2}}$. This can be
  seen from the definition of the matrix exponential and due to the fact that
  monomials are monotone with respect to $\pi(\MR_{d}^{+})$. Note further that
  the matrices 
  $l^{(i,j+1)}$ are in $\pi(\MR_{d}^{+})$, i.e. entrywise non-negative, and the
  same holds true for $\hat{C}_{i}$ for $i=1,\ldots,q+1$ and
  $j=0,\ldots,q$. Thus by Lemma~\ref{lem:hadamard-product} we have $(l^{(i,j+1)}\odot \hat{C}_{i})^{n}\preceq
  (l^{(i,j+1)})^{n}\odot (\hat{C}_{i})^{n}$ for every $n\in\MN$ and hence for
  $i=1,\ldots,q+1$ and $j=0,\ldots,q$ we see that
  \begin{align}\label{eq:bk22-CARMA-positive-sum-3}
    \E^{ l^{(i,j+1)}\odot \hat{A}_{i}}=\sum_{n\in\MN}\frac{(l^{(i,j+1)}\odot
    \hat{A}_{i})^{n}}{n!}\preceq \sum_{n\in\MN}\frac{(l^{(i,j+1)})^{n}\odot
    \hat{A}_{i}^{n}}{n!}\preceq
    l^{(i,j+1)}\odot\sum_{n\in\MN}\frac{\hat{A}_{i}^{n}}{n!}. 
  \end{align}
  This together with assumption~\eqref{eq:bk22-bistochastic-smaller-one} and
  the convexity of the matrix exponential imply 
  \begin{align*}
    \sum_{i=0}^{q}\E^{s\hat{C}_{i}}\preceq \sum_{i=0}^{q}\E^{s\sum_{j=0}^{q}l^{(i,j+1)}\odot\hat{A}_{j+1}}&\preceq
                                                                                                            \sum_{i=0}^{q}\sum_{j=0}^{q}l^{(i,j+1)}\odot\E^{s\hat{A}_{j+1}}\\
    &=
                                      \sum_{j=0}^{q}\Big(\sum_{i=0}^{q}l^{(i,j+1)}\Big)\odot\E^{s\hat{A}_{j+1}}\\
    &=\sum_{j=0}^{q}\E^{s\hat{A}_{j+1}},\quad \forall s\geq 0.
  \end{align*}
  Hence, from~\eqref{eq:bk22-CARMA-positive-sum} it follows that
  $\log\big(Q(\lambda)\check{P}(\lambda)^{-1}\big)$ is given by the Laplace transform
  of the $\pi(\MR_{d}^{+})$-valued kernel $s\mapsto
  s^{-1}\sum_{i=0}^{q}(\E^{s\hat{A}_{i+1}}-\E^{s\hat{C}_{i}})$, which by
  Bernstein's theorem yields the complete monotonicity of
  $\log\big(Q(\lambda)\check{P}(\lambda)^{-1}\big)$ and hence following the reasoning
  above we conclude by~\cref{item:bk22-positive-stationary-1} that
  $(X_{t})_{t\in\MR}$ is $\MR_{d}^{+}$-valued. That $Q$ and $P$ are the
  moving-average, respectively, autoregressive polynomials of
  $(X_{t})_{t\in\MR}$ then follows from the commutativity of $Q(\lambda)$ and
  $P(\lambda)$ and Remark~\ref{rem:MA-and-AR-polynomials}.
\end{proof}

\begin{remark}
  \begin{enumerate}
  \item[i)] Following \cite[Theorem 6.4.15 c)]{HJ91} the (real) logarithm of the
    matrix $P(\lambda)$ exists if and only if $P(\lambda)$ is non-singular and
    has an even number of Jordan blocks of each size for every negative
    eigenvalue.
  \item[ii)] The commutativity assumptions in
      Proposition~\ref{prop:bk22-CARMA-positive} may appear as quite
      restrictive, however, similarly to \cref{item:bk22-positive-stationary-2}
      before, we can use this proposition to construct
      $\MR_{d}^{+}$-valued causal MCARMA processes by selecting appropriate
      matrices $(\hat{C}_{i})_{i=0,\ldots,q}$ and
      $(\hat{A}_{i})_{i=1,\ldots,p}$, e.g. by choosing diagonal matrices
      satisfying the extra condition~\eqref{eq:bk22-bistochastic-smaller-one}. 
  \item[iii)] Note that the technical assumption of
    Proposition~\ref{prop:bk22-CARMA-positive} is best understood when departing
    from the condition   
    \begin{align}\label{eq:bk22-bistochastic-condition-2}
      \sum_{i=0}^{q}\hat{A}_{i+1}\succeq \sum_{i=0}^{q}\hat{C}_{i}.
    \end{align}
    Indeed, note that if~\eqref{eq:bk22-bistochastic-condition-2} holds for all
    $1\leq k,n\leq d$, then in particular $\sum_{i=0}^{q}(\hat{A}_{i+1})_{k,n}\geq
    \sum_{i=0}^{q}(\hat{C}_{i})_{k,n}$. Following the Hardy-Littlewood rearrangement
    inequality and Hall's marriage theorem, see also \cite[Equation 3]{Bal94}
    and references therein, we see that for all $i=0,\ldots,q$ and
    $k,n=1,\ldots,d$ there exist $(l^{(i,j+1)}_{k,n})_{j=0,\ldots,q}$ with $0\leq l^{(i,j+1)}_{k,n}\leq 1$ such that 
    $(\hat{C}_{i})_{k,n}\leq
    \sum_{j=0}^{q}l^{(i,j+1)}_{k,n}(\hat{A}_{j+1})_{k,n}$ and
    $\sum_{i=0}^{q}l^{(i,j)}_{k,n}=1$ for every $1\leq k,n\leq d$. Hence,
    setting $l^{(i,j)}=(l^{(i,j)}_{k,n})_{1\leq k,n\leq d}$,  we see that the
    conditions in Proposition~\ref{prop:bk22-CARMA-positive} are met if we
    assume~\eqref{eq:bk22-bistochastic-condition-2} together with
    $(l^{(i,j)})^{n}\preceq l^{(i,j)}$ for every $i=0,\ldots,q$ and
    $j=1,\ldots,q+1$ any $n\in\MN$. 
    Condition~\eqref{eq:bk22-bistochastic-condition-2} is the analogous
    multivariate condition compared to the univariate case in~\cite[Theorem 1
    e)]{TC05}. In our case, the additional
    assumption~\eqref{eq:bk22-bistochastic-smaller-one} is needed
    in~\eqref{eq:bk22-CARMA-positive-sum-3}. Otherwise, the matrix exponential is
    not convex with respect to the Hadamard product. Note further that it follows
    from~\eqref{eq:bk22-CARMA-positive-sum}, that the stronger, but more accessible,
    condition $\sum_{i=0}^{q}(\E^{s\hat{A}_{i+1}}-\E^{s\hat{C}_{i}})\succeq 0$ for all
    $s\geq 0$ is sufficient and could replace the condition
    in~\eqref{eq:bk22-bistochastic-smaller-one}. 
  \end{enumerate}
\end{remark}

\subsection{Cone valued non-stable output processes}\label{sec:bk22-non-stat-mcarma}

In this section we study cone valued non-stable output and non-causal MCARMA
processes. As before we assume that $\cK$ is a cone in $\MM_{n,m}$ for
$n,m\in\MN$ and let $L=(L_{t})_{t\in\MR}$ be a $\cK$-increasing
two-sided L\'evy process defined on the filtered probability space
$(\Omega,\cF,\MF,\MP)$ such that $\EX{\log(\norm{L_{1}}_{n,m})}<\infty$. Moreover, we let $\cA_{p}$,
$E_{p}$ and $\cC_{q}$ be as in~\eqref{eq:bk22-Ap}-\eqref{eq:bk22-Ep} and assume throughout this section
that $\tau(\cA_{p})\geq 0$. Analogous to the stable case in the previous
section, we are interested in sufficient and necessary conditions for the
non-stable output processes to be cone valued. Recall that every non-stable output process
$(X_{t})_{t\in\MR}$, associated with $(\cA_{p},E_{p},\cC_{q},L)$, has the representation    
\begin{align}\label{eq:bk22-CARMA-p-q-variation-of-constant}
  X_{t}=\cC_{q}\E^{(t-s)\cA_{p}}Z_{s}+\int_{s}^{t}\cC_{q}\E^{(t-u)\cA_{p}}E_{p}\D
  L_{s},\quad s<t.
\end{align}
If, in addition to the above, condition~\eqref{eq:bk22-stationary-solution-condition} is
satisfied, then there exists a unique stationary solution
to~\eqref{eq:bk22-CARMA-p-q-variation-of-constant} which is the associated
non-causal MCARMA process. However, in contrast to the causal case, a non-causal
MCARMA process does not admit representation~\eqref{eq:bk22-stationary-carma}. Indeed, following the proof of
Proposition~\ref{prop:bk22-state-space-MCARMA}, we see that the stationary representation of
$(\vect(X_{t}))_{t\in\MR}$ in~\eqref{eq:bk22-stationary-solution}, and hence
also of $(X_{t})_{t\in\MR}$, is considerably more complicated due to the
kernel $g_{2}$ in decomposition~\eqref{eq:bk22-complex-integral-2}, which in
the non-causal case does not vanish. We therefore assess the positivity of non-stable output processes
$(X_{t})_{t\in\MR}$ given by~\eqref{eq:bk22-CARMA-p-q-variation-of-constant}
as follows: We look for conditions on the model parameters
$\opA_{1},\ldots,\opA_{p}$ and $\opC_{0},\ldots, \opC_{q}$ such that for every
$s<t$ the process $(X_{t})_{t\geq s}$ is $C$-valued whenever $Z_{s}\in \cK^{p}$
and $(L_{t})_{t\in \MR}$ is $\cK$-increasing.
% Indeed, since the dynamics of $(X_{t})_{t\in\MR}$ do not only depend on the action of
% the kernel $g(s)=\cC_{q}\E^{s\cA_{p}}E_{p}$ on the increments of
% $(L_{t})_{t\in\MR}$, but also on the previous values of $Z_{s}$ for $s<t$,
% this consideration
\begin{remark}
  \begin{enumerate}
  \item[i)] In the deterministic control literature a similar type of
    positivity is often referred to as \emph{internal positivity}, see,
    e.g.~\cite{FR00}. We call a state space model \emph{internal positive}, if
    for every non-negative initial value $Z_{s}$ and every $\cK$-valued input
    process $(L_{t})_{t\in\MR}$ the output process $(X_{t})_{t\geq s}$ assumes
    only values in $\cK$.  
  \item[ii)] Again we may assume that $Z_{s}$ is $\cF_{s}$-measurable,
    such that the process $(X_{t})_{t\geq s}$ is adapted to the natural
    filtration. In this case, however, $(X_{t})_{t\geq s}$ is not necessarily an MCARMA
    process anymore. Moreover, even if a unique stationary solution (MCARMA
    process) exists, conditions such that $(X_{t})_{t\geq s}$ is cone valued for
    all initial values $Z_{s}\in \cK^{p}$ are in general neither
    sufficient nor necessary for the MCARMA process to be cone valued. Indeed,
    the stationary distribution of $(Z_{t})_{t\in\MR}$, if it exists, might
    not be supported on $\cK^{p}$, see Section~\ref{sec:bk22-posit-semi-defin-1}
    for an example.
  \end{enumerate}  
\end{remark}

Let $J\subseteq \set{1,\ldots,p}$ and denote by $\cK^{J,p}$ the wedge\footnote{A
wedge $W\subseteq \MM_{n,m}$ is a convex cone without the property
$W\cap(-W)=\set{\zero_{n,m}}$.} in $(\MM_{n,m})^{p}$ consisting of the cone
$\cK$ in the in the $\cK$-th Cartesian coordinates (and $\MM_{n,m}$ otherwise). In
particular for $J=\set{1,\ldots,p}$ we have $\cK^{J,p}=\cK^{p}$ and
$\cK^{\set{1},p}=\cK\times \MM_{n,m}\times\dots\times \MM_{n,m}$. To ensure the
positivity of the term $\cC_{q}\E^{(t-s)\cA_{p}}Z_{s}$
in~\eqref{eq:bk22-CARMA-p-q-variation-of-constant} for every $Z_{s}\in
\cK^{p}$, we see that for all $t>s$ the operator $\cC_{q}\E^{(t-s)\cA_{p}}$ must
map $\cK^{p}$ into $\cK$. Note that in the case of a MCAR process, i.e. where
$\cC_{0}=[\opI,\op0,\ldots,\op0]$, one would think that $\E^{s\cA_{p}}$ has to
map $\cK^{p}$ onto $\cK^{\set{1},p}$ only, as the output operator $\cC_{q}$
only projects down onto the first block matrix component anyway. However, we
have the following:    

\begin{lemma}\label{lem:bkk22-pos-necessary-car}
  Let $(\cA_{p},E_{p},\cC_{q},L)$ be as above and let $(X_{t})_{t\geq s}$ be
  the associated non-stable output process
  in~\eqref{eq:bk22-CARMA-p-q-variation-of-constant}. Then the following holds
  true:   
  \begin{lemenum}
  \item\label{item:bk22-pos-necessary-car-1} If $\opC_{j}\in\pi(\cK)$ for all
    $j=0,\ldots,q$ and $\cA_{p}$ is quasi-positive with respect to $\cK^{p}$,
    then $(X_{t})_{t\geq s}$ is $\cK$-valued for all initial values $Z_{s}\in
    \cK^{p}$ and every $\cK$-increasing L\'evy process $(L_{t})_{t\in\MR}$.
  \item\label{item:bk22-pos-necessary-car-2} If $\cA_{p}$ is quasi-positive with
    respect to $\cK^{J,p}$ then $J=\set{1,2,\ldots,p}$.
  \item\label{item:bk22-pos-necessary-car-3} Conversely, if $(X_{t})_{t\geq s}$ is
    $\cK$-valued for all initial values $Z_{s}\in \cK^{p}$ and every $\cK$-increasing
    L\'evy process $(L_{t})_{t\in\MR}$, then $\opC_{j}\in\pi(\cK)$ for all
    $j=0,\ldots,q$. If even $\cK=\opC_{j}^{-1}(\cK)$ holds for all
    $j=0,\ldots,q$, then $\cA_{p}$ must be quasi-positive with respect to
    $\cK^{p}$. 
  \end{lemenum}
\end{lemma}
\begin{proof}
  We proof part i) first. Suppose $\cA_{p}$ is quasi-positive with respect
  to $\cK^{p}$, then by definition $\E^{t\cA_{p}}\opz\in \cK^{p}$
  for every $\opz\in \cK^{p}$ and $t\geq 0$. Hence for $Z_{s}\in \cK^{p}$ we have
  $\E^{(t-s)\cA_{p}}Z_{s}\in \cK^{p}$ and by definition of a $\cK$-increasing L\'evy
  process, we have $E_{p}(L_{s}-L_{s'})\in \cK^{p}$ for all $s>s'$. This implies
  that $\int_{s}^{t}\E^{(t-s)\cA_{p}}E_{p}\D L_{s}\in \cK^{p}$ for all $t>s$. If
  moreover $\opC_{j}\in\pi(\cK)$ for all $j=0,\ldots,q$ then
  $\cC_{q}(\E^{(t-s)\cA_{p}}Z_{s})\in \cK$ and
  $\cC_{q}(\int_{s}^{t}\E^{(t-s)\cA_{p}}E_{p}\D L_{s})\in \cK$, which according
  to~\eqref{eq:bk22-CARMA-p-q-variation-of-constant} yields $X_{t}\in \cK$ for all $t\geq s$.\\
  Next we show that the particular form of $\cA_{p}$ implies that it can only
  be quasi-positive on $(\MM_{n,m})^{p}$ with respect to the wedge $\cK^{J,p}$ if
  $J=\set{1,2,\ldots,p}$. Recall that quasi-positivity of $\cA_{p}$ with
    respect to $\cK^{J,p}$ can be equivalently characterized by the property
    that $\llangle \cA_{p}\bx, \bu \rrangle_{p}\geq 0$, whenever $\llangle
    \bx,\bu\rrangle_{p}=0$ with $\bx\in \cK^{J,p}$ and $\bu\in(\cK^{*})^{J,p}$ where
    we denote the inner-product on $(\MM_{n,m})^{p}$ by $\llangle
    \cdot,\cdot\rrangle_{p}$.\clearpage{}
    Therefore, let $\bx\in \cK^{J,p}$, $\bu\in(\cK^{*})^{J,p}$ and note that
    \begin{align}\label{eq:bk22-lemma-quasi-monotone} 
             \llangle \cA_{p}\bx,\bu\rrangle_{p}&=\llangle (x_{2},x_{3},\ldots,x_{p},
                                   \sum_{i=1}^{p}\opA_{i}(x_{i}))^{\intercal}, \bu\rrangle_{p}\nonumber\\
                                       &=\sum_{j=1}^{p-1}\langle x_{j+1}, u_{j}\rangle_{n,m}
   +\sum_{i=1}^{p}\langle \opA_{p+1-i}(x_{i}),u_{p}\rangle_{n,m}.  
\end{align}
Set $J^{c}\df\set{1,2,\ldots,p}\!\setminus\! J$ and suppose that $J^{c}$ is
non-empty. If $p\in J$, let $\bx\in\cK^{J,p}$ and $\bu \in (\cK^{*})^{J,p}$ be such that $u_{j}=0$ for
every $j\in J$ and $\langle x_{i},u_{i}\rangle_{n,m}=0$ for $i\in
J^{c}$, then clearly $\llangle \bx, \bu \rrangle_{p}=0$,
but from~\eqref{eq:bk22-lemma-quasi-monotone} we see that $\llangle
\cA_{p}\bx,\bu\rrangle_{p}=\sum_{j\in J^{c}}\langle x_{j+1},u_{j}\rangle_{n,m}$
which can be negative since we only assumed that $u_{j}\in \MM_{n,m}$ and
$x_{j+1}$ can be chosen arbitrary as long as $\langle
x_{j+1},u_{j+1}\rangle_{n,m}=0$ for $j+1\in J^{c}$. The case $p\in J^{c}$ then
follows by a similar argument and we see that $J^{c}$ must be empty or
otherwise $\cA_{p}$ can not be quasi-positive with respect to $\cK^{J,p}$. This
implies that we must have $J=\set{1,2,\ldots,p}$.   
% +\sum_{i=1}^{p}\langle \opA_{p+1-i}(x_{i}),u_{p}\rangle_{n,m}
% For $j\in J^{c}$ let $x_{j},u_{j}\in \MM_{n,m}$
% such that $\langle x_{j},u_{j}\rangle_{n,m}=0$
% If we let $u_{p}=0$ and $x_{j}=u_{j}=0$
% for all $j\in J$ then
\\
Lastly, for for the necessary direction in part iii). We suppose that
$X_{t}\in \cK$ for all $t\geq s$, $Z_{s}\in \cK^{p}$ and every $\cK$-increasing
L\'evy process $(L_{t})_{t\in\MR}$. In particular, at $t=s$ we have
$X_{s}=\cC_{q}Z_{s}=\sum_{i=1}^{q+1}\opC_{i-1}(Z^{(i)}_{s})\in \cK$ for all
$Z_{s}=(Z_{s}^{(1)},Z_{s}^{(2)},\ldots,Z_{s}^{(p)})\in \cK^{p}$. Therefore, if
we let $z\in \cK$ be arbitrary and set $z_{s}^{j}\df e_{j}\otimes z$ for
$j=1,\ldots,q$, then $z_{s}^{j}\in \cK^{p}$ and by assumption we must have
$X_{s}=\opC_{j-1}(z)\in \cK$ for all $j=1,\ldots,q+1$. Since $z\in \cK$ was
arbitrary we conclude that $\opC_{j-1}\in\pi(\cK)$ for all
$j=1,\ldots,q+1$. Next, assume that even $\cK=\opC_{j}^{-1}(\cK)$ holds for all
$j=0,\ldots,q$ and show that in this case $\cA_{p}$ must be quasi-positive with
respect to the cone $\cK^{p}$. For this let $Z_{s}\in \cK^{p}$ be fixed, but
arbitrary. Since the constant zero L\'evy process is $\cK$-increasing as well, it
follows by assumption that $(X_{t})_{t\in\MR}$ given by
$X_{t}=\cC_{q}(\E^{(t-s)\cA_{p}}Z_{s})$ is $\cK$-valued for all $s<t\in\MR$ and every
$Z_{s}\in \cK^{p}$. Since by the first part $\opC_{j}\in\pi(\cK)$ and by assumption
even $\cK=\opC_{j}^{-1}(\cK)$ for all $j=0,\ldots,q$, we see that for every $s<t$ there
exist a $J\subseteq \set{1,\ldots,p}$ such that $\set{1,\ldots,q}\subseteq J$
and $\E^{(t-s)\cA_{p}}Z_{s}\in \cK^{J,p}$. But since this holds for every
$s<t\in\MR$, we find in every neighborhood infinitely many time points
$(t_{j})_{j\in\MN}$ such that $\E^{(t_{j}-s)\cA_{p}}Z_{s}\in \cK^{J,p}$ holds
for the same $J$. From this we conclude that already
$\E^{t\cA_{p}}(\cK^{J,p})\subseteq \cK^{J,p}$ must hold for all $t\geq 0$ and some
set $J\subseteq \set{1,\ldots,p}$ (first prove this for rational time points and
then extend to irrational ones by continuity). This, however, by definition means that
$\cA_{p}$ is quasi-positive with respect to $\cK^{J,p}$ and thus we conclude
from part ii) that already $J=\set{1,2,\ldots,p}$ holds, i.e. that
$\cA_{p}$ must be quasi-positive with respect to $\cK^{p}$.    
% Remark: 
% Note that also
% $Z_{s}=(\zero_{d},\ldots,\zero_{d})^{\intercal}\in C^{p}$ and hence by
% assumption we must have $X_{t}=\cC_{q}\int_{s}^{t}\E^{(t-s)\cA_{p}}E_{p}\D
% L_{s}\in C$. Since we already proved that $\opC_{j}\in\pi(C)$ for all
% $j=0,\ldots,j$ this means that at least for one $j\in\set{0,\ldots,q}$ we
% must have $\int_{s}^{t}\E^{(t-s)\cA_{p}}E_{p}\D L_{s}\in C^{j,p}$ for $s>t$, where
% $C^{j,p}=V\times\dots \times C\times V\times \dots \times V$ denotes the
% wedge in $V^{p}$ with $C$ in the $j$-th component of the cartesian
% product. This however implies that $\E^{(t-s)\cA_{p}}$
\end{proof}

\vspace{-0.5mm}

The following theorem is our main result on the positivity of non-stable
output processes on $\MM_{n,m}$. Under an extra condition on the stationary
distribution of the non-stable output process, it also provides a sufficient
condition for the positivity of non-causal MCARMA processes.

\begin{theorem}\label{thm:bk22-positivite-non-stable-MCARMA}
  Let $p\in\MN$ and $q\in\MN_{0}$ with $q<p$. For $s<t\in\MR$, let
  $(X_{t})_{t\geq s}$ be the output process
  in~\eqref{eq:bk22-CARMA-p-q-variation-of-constant}, associated with
  $(\cA_{p},E_{p},\cC_{q},L)$ such that $\tau(\cA_{p})\geq 0$. Then the
  following holds true:     
\begin{theoremenum}
\item\label{item:bk22-positivite-non-stable-MCARMA-1} If $\opA_{1}\in\cL(\MM_{n,m})$
  is quasi-positive and $\opA_{i},\opC_{j}\in\pi(\cK)$ for $i=2,\ldots,p$ and
  $j=0,\ldots,q$, then $(X_{t})_{t\geq s}$ is $\cK$-valued for every initial
  value $Z_{s}\in \cK^{p}$ and $\cK$-increasing L\'evy process $(L_{t})_{t\in\MR}$.  
\item\label{item:bk22-positivite-non-stable-MCARMA-2} If
    $\cK=\opC_{j}^{-1}(\cK)$ for all $j=0,\ldots,q$. Then $(X_{t})_{t\geq s}$ is
    $\cK$-valued for every initial value $Z_{s}\in \cK^{p}$ and every
    $\cK$-increasing L\'evy process $(L_{t})_{t\in\MR}$ if and only if
    $\opA_{1}\in\cL(\MM_{n,m})$ is quasi-positive and $\opA_{i}\in\pi(\cK)$ for
    $i=2,\ldots,p$.     
\item\label{item:bk22-positivite-non-stable-MCARMA-3}If $\opC_{j}\in\pi(\cK)$
  for all $j=0,\ldots,q$ and there exists a stationary distribution of
  $(Z_{t})_{t\in\MR}$ supported on $\cK^{p}$, then the associated non-causal
  MCARMA process $(X_{t})_{t\in\MR}$ is $\cK$-valued.
\end{theoremenum}
\end{theorem}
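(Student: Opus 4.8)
The plan is to reduce the whole theorem to a single structural observation about the companion operator, namely the equivalence
\[
\cA_p\ \text{is quasi-positive w.r.t.}\ C^p
\quad\Longleftrightarrow\quad
\opA_1\ \text{is quasi-positive and}\ \opA_i\in\pi(C)\ \text{for}\ i=2,\ldots,p,
\]
after which everything follows by feeding this into Lemma~\ref{lem:bkk22-pos-necessary-car}. Part i) is the ``$\Leftarrow$'' implication combined with \cref{item:bk22-pos-necessary-car-1}, whose output-coefficient hypothesis $\opC_j\in\pi(C)$ is exactly the one assumed in part i). For the converse in part ii), the extra assumptions $\EX{L_1}=0$ and $C=\opC_j^{-1}(C)$ let me invoke \cref{item:bk22-pos-necessary-car-3}: if $(X_t)_{t\geq s}$ is $C$-valued for all $Z_s\in C^p$, then necessarily $\opC_j\in\pi(C)$ and $\cA_p$ is quasi-positive with respect to $C^p$, and the ``$\Rightarrow$'' implication of the equivalence converts the latter into the stated conditions on $\opA_1$ and the $\opA_i$.

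To establish the equivalence I would argue directly from the identity \eqref{eq:bk22-lemma-quasi-monotone} obtained in the proof of Lemma~\ref{lem:bkk22-pos-necessary-car},
\[
\llangle \cA_p\bx,\bu\rrangle_p=\sum_{j=1}^{p-1}\langle x_{j+1},u_j\rangle_{n,m}+\sum_{i=1}^{p}\langle \opA_{p+1-i}(x_i),u_p\rangle_{n,m},
\]
together with the inner-product characterization of quasi-positivity used in the paper, which presumes, as for the two main examples $\MR_d^+$ and $\MS_d^+$, that $C$ is self-dual. For ``$\Leftarrow$'', take $\bx,\bu\in C^p$ with $\llangle\bx,\bu\rrangle_p=0$; since each summand $\langle x_j,u_j\rangle_{n,m}$ is non-negative, orthogonality forces $\langle x_j,u_j\rangle_{n,m}=0$ for every $j$. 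In the identity the first sum is then non-negative term by term, the terms with $i<p$ are non-negative because $\opA_{p+1-i}\in\pi(C)$ sends $x_i\in C$ into $C$, and the single remaining term $\langle\opA_1(x_p),u_p\rangle_{n,m}$ is non-negative precisely because $\langle x_p,u_p\rangle_{n,m}=0$ and $\opA_1$ is quasi-positive. For ``$\Rightarrow$'' I would feed in sparse test vectors: choosing $\bx=e_p\otimes x$ and $\bu=e_p\otimes u$ with $x,u\in C$ and $\langle x,u\rangle_{n,m}=0$ collapses the identity to $\langle\opA_1(x),u\rangle_{n,m}\geq 0$, yielding quasi-positivity of $\opA_1$; and for fixed $k\in\{2,\ldots,p\}$ choosing $\bx=e_{p+1-k}\otimes x$ and $\bu=e_p\otimes u$ with arbitrary $x,u\in C$ automatically satisfies $\llangle\bx,\bu\rrangle_p=0$ (the supports are disjoint since $p+1-k\neq p$) and collapses the identity to $\langle\opA_k(x),u\rangle_{n,m}\geq 0$ for all $u\in C$, whence $\opA_k(x)\in C$, i.e. $\opA_k\in\pi(C)$.

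Part iii) is then immediate and does not really use the equivalence: under the part i) hypotheses $\opC_j\in\pi(C)$, so $\cC_q$ maps $C^p$ into $C$; since \eqref{eq:bk22-stationary-solution-condition} guarantees the stationary output is $X_t=\cC_q Z_t$ and the stationary law of $(Z_t)_{t\in\MR}$ is assumed supported on $C^p$, we get $Z_t\in C^p$ and hence $X_t\in C$ almost surely for every $t$. (The conditions on the $\opA_i$ enter only indirectly here, as a natural mechanism for making the support hypothesis hold.) The step I expect to be most delicate is the ``$\Rightarrow$'' direction of the equivalence: one must choose $\bx,\bu$ so that they lie in $C^p$ \emph{and} satisfy the orthogonality constraint exactly, and the passage from $\langle\opA_k(x),u\rangle_{n,m}\geq 0$ for all $u\in C$ to $\opA_k\in\pi(C)$ is exactly where self-duality is used. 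Should one wish to avoid self-duality, $\opA_k\in\pi(C)$ can instead be extracted from the semigroup invariance $\E^{t\cA_p}(C^p)\subseteq C^p$ by examining the $p$-th block component of $\E^{t\cA_p}(e_{p+1-k}\otimes x)$ as $t\downarrow 0$ and using that $C$ is closed.
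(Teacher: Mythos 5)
Your proposal is correct and is essentially the paper's own proof: the paper likewise reduces all three parts to the equivalence between quasi-positivity of $\cA_{p}$ with respect to $C^{p}$ and the stated conditions on $\opA_{1},\ldots,\opA_{p}$, proves the sufficiency direction with exactly your orthogonality argument (each summand $\langle x_{j},u_{j}\rangle_{n,m}\geq 0$ forces $\langle x_{p},u_{p}\rangle_{n,m}=0$), and then invokes Lemma~\ref{lem:bkk22-pos-necessary-car} for parts i)--iii). If anything, your necessity step is cleaner than the paper's: the paper's displayed test vectors $\bx=(x_{1},\zero_{n,m},\ldots,\zero_{n,m})^{\intercal}$, $\bu=(\zero_{n,m},\ldots,\zero_{n,m},u_{p})^{\intercal}$ actually isolate the term $\langle\opA_{p}(x_{1}),u_{p}\rangle_{n,m}$ rather than the claimed $\langle\opA_{1}(x_{p}),u_{p}\rangle_{n,m}$ (your choice $\bx=e_{p}\otimes x$, $\bu=e_{p}\otimes u$ is the correct one), your disjoint-support vectors $e_{p+1-k}\otimes x$, $e_{p}\otimes u$ supply the ``$\opA_{i}\in\pi(C)$ by analogous arguments'' step that the paper leaves implicit, and your explicit self-duality caveat (with the small-$t$ semigroup expansion as a fallback) makes precise an assumption the paper uses silently both in asserting $\langle x_{j},u_{j}\rangle_{n,m}\geq 0$ for cone elements and in passing from $\langle\opA_{k}(x),u\rangle_{n,m}\geq 0$ for all $u\in C$ to $\opA_{k}(x)\in C$.
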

\begin{proof}
We begin with the proof of part i). By~\cref{item:bk22-pos-necessary-car-1} it
is enough to show that $\opA_{i}\in\pi(\cK)$ and $\opA_{1}$ is quasi-positive with
respect to $\cK$ implies that $\cA_{p}$ is quasi-positive with respect to
$\cK^{p}$. Indeed, let $\bx=(x_{1},x_{2},\ldots,x_{p})^{\intercal}\in \cK^{p}$ and
$\bu=(u_{1},u_{2},\ldots,u_{p})^{\intercal}\in (\cK^{*})^{p}$ such that
$\llangle \bx, \bu \rrangle=0$ we then want to show that $\llangle \cA_{p}\bx,\bu\rrangle\geq 0$.
As before we have
\begin{align}\label{eq:bk22-quasi-monotonicitiy}
  \llangle \cA_{p}\bx,\bu\rrangle&=\sum_{j=1}^{p-1}\langle x_{j+1},
                                   u_{j}\rangle_{n,m}+\sum_{i=1}^{p}\langle
                                   \opA_{p-i+1}(x_{i}),u_{i}\rangle_{n,m},   
\end{align}
and due to the fact that $x_{j+1}\in \cK$, $u_{j}\in\cK^{*}$ for all $j=1,\ldots,p-1$, we see that $\langle x_{j+1},
u_{j}\rangle_{n,m}\geq 0$ and hence the first sum
in~\eqref{eq:bk22-quasi-monotonicitiy} is non-negative. Moreover, we see that $\langle
\opA_{p-i+1}(x_{i}),u_{i}\rangle\geq 0$ for $i=1,\ldots,p-1$ by assumption that
$\opA_{i}(\cK)\subseteq \cK$ for $i=2,\ldots,p$ and thus the remaining term of the second sum is 
$\langle \opA_{1}(x_{p}),u_{p}\rangle_{n,m}$. By assumption we have $\llangle
\bx,\bu \rrangle=\sum_{j=1}^{p}\langle x_{j},u_{j}\rangle_{n,m}=0$, and in
particular $\langle x_{p},u_{p}\rangle_{n,m}=0$, which by the quasi-positivity of
$\opA_{1}$ implies $\langle \opA_{1}(x_{p}),u_{p}\rangle_{n,m}\geq 0$. Hence
we see that $\llangle \cA_{p}\bx,\bu\rrangle_{p}\geq 0$ whenever $\llangle \bx, \bu
\rrangle_{p}=0$, which proves the quasi-positivity of $\cA_{p}$ with respect
to $\cK^{p}$.\\
The second assertion is a consequence
of~\cref{item:bk22-pos-necessary-car-3} and it is only left to prove that the
quasi-positivity of $\cA_{p}$ with respect to $\cK^{p}$ implies that
$\opA_{1}$ is quasi-positive and $\opA_{i}\in\pi(\cK)$ for $i=2,\ldots,p$. For
this, suppose that $\cA_{p}$ is quasi-positive with respect to
$\cK^{p}$, then for every $\bx\in \cK^{p}$, $\bu\in(\cK^{*})^{p}$ with $\llangle \bx, \bu
\rrangle_{p}=0$ we find that the term in~\eqref{eq:bk22-quasi-monotonicitiy}
is non-negative. If we let $x_{1}\in \cK$ and $u_{p}\in\cK^{*}$ be such that $\langle
x_{1},u_{p}\rangle_{n,m}=0$ and if we set $\bu=(0,\ldots,0,u_{p})^{\intercal}$ and
$\bx=(x_{1},0,\ldots,0)^{\intercal}$, then we observe that $\llangle \bx, \bu
\rrangle_{p}=0$ and~\eqref{eq:bk22-quasi-monotonicitiy} reduces to $\langle \opA_{1}(x_{p}),u_{p}\rangle_{n,m}\geq 0$.
Since $x_{1}$ and $u_{p}$ satisfy $\langle x_{1},u_{p}\rangle_{n,m}=0$ but are
otherwise arbitrary, it follows that $\opA_{1}$ is quasi-positive. Moreover,
we see that $\opA_{i}\in\pi(\cK)$ for $i=2,\ldots,p$ follows by a similar argument. The
third assertion follows immediately from~\eqref{eq:bk22-CARMA-p-q-variation-of-constant}, since by assumption
$Z_{t}$ is supported on $\cK^{p}$ for all $t\in\MR$ and $\cC_{q}$ maps by
assumption $\cK^{p}$ into $\cK$. 
\end{proof}

\begin{remark}
Note that the positivity condition
in~\cref{item:bk22-positivite-non-stable-MCARMA-1} only applies in the
non-stable case, i.e. for $\tau(\cA_{p})\geq 0$. Indeed, suppose that
$\cA_{p}$ is quasi-positive with respect to $\cK^{p}$ and $\tau(\cA_{p})<0$,
then by an application of Lemma~\ref{lem:bkk22-inverse-quasi-positive} to
$\cA_{p}$ and the cone $\cK^{p}$ implies that the inverse $\cA_{p}^{-1}$ exists and must map positive vectors into negatives,
i.e. $\cA_{p}^{-1}(\cK^{p})\subseteq -\cK^{p}$. However, the inverse of $\cA_{p}$ is
explicitly known as
\begin{align}
  \cA_{p}^{-1}=
  \begin{bmatrix}
    \opA_{p}^{-1}\opA_{p-1} & \opA_{p}^{-1}\opA_{p-2} & \ldots & \opA_{p}^{-1}\\
    \opI & \op0 & \ldots & \op0 \\
    \op0 & \opI & \ddots & \vdots \\
    \vdots & \ddots   & \ddots & \op0 \\
    \op0 & \ldots & \op0 & \opI  
  \end{bmatrix},                  
\end{align}
and obviously never satisfies $\cA_{p}^{-1}(\cK^{p})\subseteq -\cK^{p}$. Conversely,
note that if $\opA_{i}$ for $i=1,\ldots,p$ satisfies the assumptions
in~\cref{item:bk22-positivite-non-stable-MCARMA-1}, then $\cA_{p}$ can not
satisfy $\tau(\cA_{p})<0$. This is so since $\sigma(\cA_{p})=\sigma(\opA_{p})$
and by the Perron-Frobenius theorem, see also~\cite[Proposition 1]{HL98}, we
know that there exists at least one leading eigenvalue of $\opA_{p}$ which is
non-negative  and hence $\tau(\cA_{p})\geq 0$.     
\end{remark}

In the following two corollaries we concretize the positivity criteria in
Theorem~\ref{thm:bk22-positivite-non-stable-MCARMA} for the case of
$\MR_{d}$-valued and $\MS_{d}$-valued non-stable output processes.

\begin{corollary}\label{coro:non-stationary-positive-Rd}
  Assume that $m=1$ and $n=d$ for some $d\in\MN$ and let $p\in\MN$,
  $q\in\MN_{0}$ such that $q<p$. Moreover, let $(L_{t})_{t\in\MR}$ be an
  $\MR_{d}^{+}$-valued L\'evy process and for $s<t\in\MR$ we denote by $(X_{t})_{t\geq
    s}$ the non-stable output process associated with
  $(\cA_{p},E_{p},\cC_{q},L)$ such that $\tau(\cA_{p})\geq 0$ and
  $(A_{i})_{i=1,\ldots,p}$ and $(C_{j})_{j=0,\ldots,q}$ satisfy the following conditions:
  \begin{enumerate}
  \item[i)] For every $i=2,\ldots,p$ we have $A_{i}=(a_{k,n}^{(i)})_{1\leq
      k,n\leq d}$ such that $a_{k,n}^{(i)}\geq 0$ for all $1\leq k,n\leq d$;
  \item[ii)] $A_{1}=(a^{(1)}_{k,n})_{1\leq k,n\leq d}$ is such that
    $a^{(1)}_{k,n}\geq 0$ for all $1\leq k,n\leq d$ with $k\neq n$;
  \item[iii)] For every $j=0,\ldots,q$ we have $C_{j}=(c_{k,n}^{(j)})_{1\leq
      k,n\leq d}$ such that $c_{k,n}^{(j)}\geq 0$ for all $1\leq k,n\leq d$.
  \end{enumerate}
  Then $(X_{t})_{t\geq s}$ is $\MR_{d}^{+}$-valued, whenever $Z_{s}\in \MR_{pd}^{+}$. 
\end{corollary}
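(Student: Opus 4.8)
The plan is to read this corollary as a direct specialization of \cref{item:bk22-positivite-non-stable-MCARMA-1} to the state space $\MM_{d,1}=\MR_{d}$ with the cone $C=\MR_{d}^{+}$, so that the only real work is translating the hypotheses of that theorem---which are phrased in terms of positive and quasi-positive \emph{operators}---into the concrete entrywise sign conditions (i)--(iii). Since $m=1$, every operator $\opA_{i}\in\cL(\MM_{d,1})$ is identified with a matrix $A_{i}\in\MM_{d}$, and likewise each $\opC_{j}$ with $C_{j}\in\MM_{d}$, so the abstract operator hypotheses of the theorem become statements about these matrices.

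First I would invoke the characterization of $\pi(\MR_{d}^{+})$ recalled in case~a) of Section~\ref{sec:bk22-cone-valued-levy}, namely that a matrix leaves the positive orthant invariant if and only if all of its entries are non-negative. Condition~(i) then says precisely that $A_{i}\in\pi(\MR_{d}^{+})$ for $i=2,\ldots,p$, and condition~(iii) says $C_{j}\in\pi(\MR_{d}^{+})$ for $j=0,\ldots,q$. Next I would use the companion characterization of quasi-positive (cross-positive) matrices from the same discussion: $A_{1}$ is quasi-positive with respect to $\MR_{d}^{+}$ if and only if its off-diagonal entries are non-negative, the diagonal being unrestricted. This is exactly condition~(ii).

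Having matched all three conditions, the hypotheses of \cref{item:bk22-positivite-non-stable-MCARMA-1} are satisfied, and since $\tau(\cA_{p})\geq 0$ by assumption we are precisely in the non-stable regime covered by that result. It therefore yields $X_{t}\in\MR_{d}^{+}$ for all $t\geq s$ whenever $Z_{s}\in(\MR_{d}^{+})^{p}=C^{p}$, which is the assertion.

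There is essentially no technical obstacle beyond correctly pairing each abstract operator condition with its entrywise counterpart; the substance was already carried out in the proof of \cref{item:bk22-positivite-non-stable-MCARMA-1}. The only point requiring slight care is to stress that the diagonal entries of $A_{1}$ are genuinely unconstrained in~(ii)---as they must be, since quasi-positivity, unlike membership in $\pi(\MR_{d}^{+})$, permits negative diagonal entries---so that~(ii) imposes sign restrictions only off the diagonal while~(i) and~(iii) impose them everywhere.
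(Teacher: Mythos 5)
Your proposal is correct and follows exactly the route the paper intends: the corollary is stated without a separate proof precisely because it is the specialization of \cref{item:bk22-positivite-non-stable-MCARMA-1} to $C=\MR_{d}^{+}$, using the characterizations from Section~\ref{sec:bk22-cone-valued-levy} that $\pi(\MR_{d}^{+})$ consists of entrywise non-negative matrices and that quasi-positivity (cross-positivity) means non-negative off-diagonal entries. Your added remark that positivity is asserted for initial values $Z_{s}\in(\MR_{d}^{+})^{p}$ correctly captures the implicit convention of Section~\ref{sec:bk22-non-stat-mcarma}.
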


Following Corollary~\ref{coro:non-stationary-positive-Rd} the condition on the
transition matrix $\cA_{p}\in\MM_{pd}$ can be visualized as follows:
% \begin{align*}
%   \cA^{\vect}_{p}=
%   \begin{pmatrix}
%     \zero_{d} & \MI_{d} & \zero_{d} & \ldots & \zero_{d} \\
%     \zero_{d} & \zero_{d} & \MI_{d} & \ddots & \vdots \\
%     \vdots &  & \ddots   & \ddots & \vdots \\
%     \vdots &  & \ddots   & \ddots & \zero_{d} \\
%     \zero_{d} & \ldots & \ldots & \zero_{d} & \MI_{d}     \\
%      & & & & \\
%     a^{(1)}_{1,1} a^{(1)}_{1,2}\ldots a^{(1)}_{1,d} & a^{(2)}_{1,1}
%     a^{(2)}_{1,2}\ldots a^{(2)}_{1,d} & \ldots & \ldots & a^{(p)}_{1,1}
%     a^{(p)}_{1,2}\ldots a^{(p)}_{1,d}\\
%     a^{(1)}_{2,1} a^{(1)}_{2,2}\ldots a^{(1)}_{2,d} & a^{(2)}_{2,1}
%     a^{(2)}_{2,2}\ldots a^{(2)}_{2,d} & \quad \ldots \quad  & \quad\ldots \quad & a^{(p)}_{2,1}
%     a^{(p)}_{2,2}\ldots a^{(p)}_{2,d}\\
%     \vdots\quad\; \vdots \quad\ddots\quad \vdots & \vdots\quad\; \vdots \quad\ddots\quad \vdots & \ldots & \ldots & \vdots\quad\; \vdots \quad\ddots\quad \vdots\\
%     \underbrace{a^{(1)}_{d,1} a^{(1)}_{d,2}\ldots
%       a^{(1)}_{d,d}}_{a^{(1)}_{i,j}\geq 0,\;\forall i,j} & \underbrace{a^{(2)}_{d,1}
%     a^{(2)}_{d,2}\ldots a^{(2)}_{d,d}}_{a^{(2)}_{i,j}\geq 0, \;\forall i,j} &
%   \ldots & \ldots & \underbrace{ a^{(p)}_{d,1} a^{(p)}_{d,2}\ldots
%     a^{(p)}_{d,d}}_{a^{(p)}_{i,j}\geq 0,\;\forall i,j\text{ s.t. }i\neq j}
%   \end{pmatrix}
% \end{align*}

{\small \begin{align*}
  \cA^{\vect}_{p}=
  \left(\begin{array}{ccccc}
    \zero_{d} & \MI_{d} & \zero_{d} & \ldots & \zero_{d} \\
    \zero_{d} & \zero_{d} & \MI_{d} &   & \vdots \\
    \vdots &  &     & \ddots \qquad & \vdots \\
          \vdots &  & \qquad\ddots   &   & \zero_{d} \\
    \zero_{d} & \ldots & \ldots & \zero_{d} & \MI_{d}     \\
     & & & & \\
    a^{(p)}_{1,1} \ldots a^{(p)}_{1,d} & a^{(p-1)}_{1,1}
    \ldots a^{(p-1)}_{1,d} & \ldots & \ldots & a^{(1)}_{1,1}
    \ldots a^{(1)}_{1,d}\\
    \vdots\quad\ddots\quad\vdots & \vdots\quad\ddots\quad\vdots & \quad \ldots \quad  & \quad\ldots \quad & \vdots\quad\ddots\quad\vdots\\
    \underbrace{a^{(p)}_{d,1} \ldots
      a^{(p)}_{d,d}}_{a^{(p)}_{k,n}\geq 0,\;\forall k,n} & \underbrace{a^{(p-1)}_{d,1}
    \ldots a^{(p-1)}_{d,d}}_{a^{(p-1)}_{k,n}\geq 0, \;\forall k,n} &
  \ldots & \ldots & \underbrace{ a^{(1)}_{d,1} \ldots
    a^{(1)}_{d,d}}_{a^{(1)}_{k,n}\geq 0,\;\forall k\neq n}
  \end{array}\right)
\end{align*}}

We obtain an analogous result for $\MS_{d}^{+}$-valued non-stable output processes:

\begin{corollary}\label{coro:CAR-p-positive}
  Assume that $m=n=d$ for some $d\in\MN$ and let $p\in\MN$,
  $q\in\MN_{0}$ such that $q<p$. For $s<t\in\MR$ we denote by $(X_{t})_{t\geq
    s}$ the output process associated with
  $(\cA_{p},E_{p},\cC_{q},L)$ such that $\tau(\cA_{p})\geq 0$ and
  $(\opA_{i})_{i=1,\ldots,p}$ and $(\opC_{j})_{j=0,\ldots,q}$ satisfy the
  following conditions:
  \begin{enumerate}
  \item[i)] For every $i=2,\ldots,p$ there exists $a_{i}\in \MM_{d}$ such that
    $\opA_{i}(x)=a_{i}xa_{i}^{*}$ for every $x\in\MS_{d}$;
  \item[ii)] There exists an $a_{1}\in\MM_{d}$ such that $\opA_{1}(x)=a_{1}x+xa_{1}^{*}$ for
    every $x\in\MS_{d}$;
  \item[iii)] For every $j=0,\ldots,q$ there exists $c_{j}\in \MM_{d}$ such that
    $\opC_{j}(x)=c_{j}xc_{j}^{*}$ for every $x\in\MS_{d}$.
  \end{enumerate}
  Then $(X_{t})_{t\geq 0}$ is $\MS_{d}^{+}$-valued, whenever $Z_{s}\in
  (\MS_{d}^{+})^{p}$. Moreover, the state-space representation of the process
  $(\vect(X_{t}))_{t\geq 0}$
  in~\eqref{eq:bk22-vect-X-state-space-rep-1}-\eqref{eq:bk22-vect-X-state-space-rep-2}
  holds with state transition operator $\cA_{p}^{\vect}$ given by 
  {\small\begin{align*}
    \cA^{\vect}_{p}\df
    \begin{pmatrix}
      \zero_{d^{2}} & \MI_{d^{2}} & \zero_{d^{2}} & \ldots & \zero_{d^{2}} \\
      \zero_{d^{2}} & \zero_{d^{2}} & \MI_{d^{2}} & \ddots & \vdots \\
      \vdots &  &  & \ddots & \vdots \\
      \vdots &  & \ddots   &   & \zero_{d^{2}} \\
      \zero_{d^{2}} & \ldots & \ldots & \zero_{d^{2}} & \MI_{d^{2}}     \\
      a_{p}\otimes a_{p} & a_{p-1}\otimes a_{p-1} & \ldots & \ldots &
      \MI_{d}\otimes a_{1}+a_{1}\otimes \MI_{d} 
    \end{pmatrix},
  \end{align*}}
  output operator $\cC_{q}^{\vect}$ given by $\cC_{q}^{\vect}=[c_{0}\otimes c_{0}, c_{1}\otimes
  c_{1},\ldots,c_{q}\otimes c_{q},\zero_{d^{2}},\ldots,\zero_{d^{2}}]$ and the
  input operator $E_{p}^{\vect}$ given by $E_{p}^{\vect}=e_{p}\otimes \MI_{d^{2}}$. 
\end{corollary}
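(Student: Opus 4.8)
The plan is to deduce the positivity statement directly from \cref{item:bk22-positivite-non-stable-MCARMA-1}, so the first task is to check that conditions~i)--iii) translate into the operator hypotheses of that theorem, namely that $\opA_{1}$ is quasi-positive with respect to $\MS_{d}^{+}$ and that $\opA_{i},\opC_{j}\in\pi(\MS_{d}^{+})$ for $i=2,\ldots,p$ and $j=0,\ldots,q$. For every map of the form $x\mapsto a x a^{*}$ this is immediate: if $x\in\MS_{d}^{+}$ then $a x a^{*}$ is again symmetric and $(a x a^{*}v,v)_{d}=(x\,a^{*}v,a^{*}v)_{d}\geq 0$ for all $v\in\MR_{d}$, so $\opA_{i},\opC_{j}\in\pi(\MS_{d}^{+})$. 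For the Lyapunov operator $\opA_{1}(x)=a_{1}x+x a_{1}^{*}$ I would verify quasi-positivity by exhibiting its operator exponential explicitly: differentiating $t\mapsto \E^{a_{1}t}x\E^{a_{1}^{*}t}$ shows that this curve solves $\tfrac{\D}{\D t}v(t)=\opA_{1}(v(t))$ with $v(0)=x$, whence $\exp(\opA_{1}t)(x)=\E^{a_{1}t}x\E^{a_{1}^{*}t}$ by uniqueness. Since the right-hand side is again of the form $x\mapsto b x b^{*}$ with $b=\E^{a_{1}t}$, it lies in $\pi(\MS_{d}^{+})$ for every $t\geq 0$, so $\opA_{1}$ is quasi-positive. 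With these three facts in hand, \cref{item:bk22-positivite-non-stable-MCARMA-1} yields that $(X_{t})_{t\geq s}$ is $\MS_{d}^{+}$-valued for every initial value $Z_{s}\in(\MS_{d}^{+})^{p}$.

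For the state space representation I would invoke Proposition~\ref{prop:bk22-vectorized-state-space-model} and compute the matrix representations of the operators through the standard Kronecker identity $\vect(AXB)=(B^{\intercal}\otimes A)\vect(X)$, using $a^{*}=a^{\intercal}$ for real matrices. This gives $A_{i}^{\vect}=a_{i}\otimes a_{i}$ for $i=2,\ldots,p$, $C_{j}^{\vect}=c_{j}\otimes c_{j}$ for $j=0,\ldots,q$, and for the Lyapunov part $A_{1}^{\vect}=\MI_{d}\otimes a_{1}+a_{1}\otimes\MI_{d}$, while $E_{p}^{\vect}=e_{p}\otimes\MI_{d^{2}}$ is read off directly. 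The key simplification is that, since $L$ is $\MS_{d}$-valued and all the operators preserve symmetry, the state process stays in $(\MS_{d})^{p}$; on this subspace the commutation matrix $K^{(d,d)}$ acts as the identity, because $\vect(F^{\intercal})=\vect(F)$ for symmetric $F$, and moreover $\vect(Z_{t}^{\intercal})=\vect(Z_{t})$. Hence the correction factors $K^{-(n,m)}$ appearing in~\eqref{eq:bk22-Ap-vect}--\eqref{eq:bk22-Ep-vect} drop out, the hatted operators $\hat{\cA}_{p}^{\vect},\hat{\cC}_{q}^{\vect},\hat{E}_{p}^{\vect}$ coincide with their plain counterparts, and they collapse to exactly the asserted forms.

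The only point requiring a little care is the invariance of $(\MS_{d})^{p}$ under the state dynamics, which underpins the symmetric simplification above; I would note that the block-shift structure of $\cA_{p}$ together with the symmetry-preservation of each $\opA_{i}$ keeps $Z_{t}$ block-wise symmetric whenever $Z_{s}$ and the increments of $L$ are. I do not expect any genuine obstacle: once the reduction to \cref{item:bk22-positivite-non-stable-MCARMA-1} and Proposition~\ref{prop:bk22-vectorized-state-space-model} is in place, everything is a direct verification via the exponential formula for the Lyapunov operator and the Kronecker vectorization identities.
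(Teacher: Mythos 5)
Your proposal is correct and follows essentially the same route as the paper: the paper's proof likewise invokes Theorem~\ref{thm:bk22-positivite-non-stable-MCARMA} together with the facts that $x\mapsto axa^{*}$ lies in $\pi(\MS_{d}^{+})$ and $x\mapsto a_{1}x+xa_{1}^{*}$ is quasi-positive, and then reads off $A_{1}^{\vect}=\MI_{d}\otimes a_{1}+a_{1}\otimes\MI_{d}$, $A_{i}^{\vect}=a_{i}\otimes a_{i}$, $C_{j}^{\vect}=c_{j}\otimes c_{j}$ via vectorization. You merely supply the verifications the paper leaves implicit (the exponential formula $\exp(\opA_{1}t)(x)=\E^{a_{1}t}x\E^{a_{1}^{*}t}$ for quasi-positivity, and the observation from Remark~2.6 that $K^{(d,d)}$ acts as the identity on vectorized symmetric matrices), which is consistent with the paper's reasoning.
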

\begin{proof}
This follows from Theorem~\ref{thm:bk22-positivite-non-stable-MCARMA} and the
fact that maps of the form $x\mapsto axa^{*}$ for $a\in\MM_{d}$ are in
$\pi(\MS_{d}^{+})$ and maps of the form $x\mapsto ax+xa^{*}$ are
quasi-positive. Moreover, note that 
$A^{\vect}_{1}(x)=\big(\MI_{d}\otimes a_{1}+a_{1}\otimes \MI_{d}\big)\vect(x)$
and $A_{i}^{\vect}(x)=a_{i}\otimes a_{i}\vect(x)$ for $i=2,\ldots,p$ and the
analogous assertions hold for the operators $(C_{j})_{j=0,\ldots,q}$. 
\end{proof}

%%%%%%%%%%%%%%%%%%%%%%%%%%%%%%%%%%%%%%%%%%%%%%%%%%%%%%%%%%%%%%%%%%%%%%%%%%%%%%%%%
\section{MCARMA based stochastic covariance models}\label{sec:bk22-mcarma-based-mult} 

In this section we study multivariate stochastic covariance models based on
symmetric and positive semi-definite MCARMA processes. We begin this section
with a brief introduction to general stochastic covariance modeling and we
recall some results on the second order structure of the return and covariance
processes in Section~\ref{sec:bk22-stoch-covar-models}. Subsequently, in
Section~\ref{sec:bk22-posit-semi-defin-1}, we discuss the multivariate
Barndorff--Nielsen and Shepard (BNS) volatility model from~\cite{PS09b} as a
particular example of an MCAR based stochastic covariance model of order one. We
motivate the use of higher-order MCARMA based covariance models to capture
certain short-lag memory effects in realized variance and cross-covariance
time-series. In Section~\ref{sec:bk22-posit-semi-defin-2}, we demonstrate the
gained flexibility in the second order structure when using higher-order MCARMA
processes through an exemplary analysis of \emph{positive semi-definite well-balanced Ornstein-Uhlenbeck processes}.   

\subsection{Stochastic covariance models and their second order structure}\label{sec:bk22-stoch-covar-models}
Let $d\in\MN$. We call an $\MR_{d}\times\MS_{d}^{+}$-valued process
$(Y_{t},X_{t})_{t\geq 0}$ a \emph{stochastic covariance model} on $\MR_{d}$,
whenever it consists of a \emph{spot covariance process} $(X_{t})_{t\geq 0}$
on $\MS_{d}^{+}$ and a $d$-dimensional logarithmic asset price process
$(Y_{t})_{t\geq 0}$ given by a stochastic differential equation of the form  
\begin{align}\label{eq:bk22-log-asset-price-model}
  \begin{cases}
    \D Y_{t}= (\alpha +X_{t}\beta)\D t+X_{t}^{1/2}\D W_{t},\quad t>0,\\
    Y_{0}=y\in\MR_{d}, 
  \end{cases}
\end{align}
where $\alpha\in\MR_{d}$ is the drift, $\beta\in\MR_{d}$ the risk-premia,
$(W_{t})_{t\geq 0}$ denotes an $\MR_{d}$-valued (standard) Brownian motion, see also~\cite{BNS13, PS09b}. The spot covariance process $(X_{t})_{t\geq 0}$ is assumed to
be integrable with respect to $(W_{t})_{t\geq 0}$ and $\MS_{d}^{+}$-valued
such that for all $t\geq 0$ the matrix square-root $X_{t}^{1/2}$ exists and the
stochastic integral in~\eqref{eq:bk22-log-asset-price-model} is
well-defined. The main challenge in stochastic covariance modeling is the appropriate specification of the spot
covariance process $(X_{t})_{t\geq 0}$ such that the model
$(Y_{t},X_{t})_{t\geq 0}$ represents the stylized facts of financial data, while being sufficiently tractable for, e.g. simulations, statistical inference
or option pricing. For option pricing in multivariate
  stochastic covariance model of OU-type, see~\cite{MKPS12}. Moreover, note that
  from a modeling perspective it would be reasonable to add a leverage term
  $\rho(\D L_{t})$ to the dynamics of $(Y_{t})_{t\geq 0}$, where $\rho\colon
  \MS_{d}\to\MR_{d}$ is some operator and $(L_{t})_{t\geq 0}$ is the driving
  L\'evy process of the spot covariance process $(X_{t})_{t\geq 0}$, see also
  ~\cite{MKPS12}. However, in these notes we omit the leverage term in
  model~\eqref{eq:bk22-log-asset-price-model} for the
  sake of simplicity.\par{} 
In contrast to the logarithmic price process $(Y_{t})_{t\geq 0}$, the spot
covariance process $(X_{t})_{t\geq 0}$ is not directly observable in markets. However, it can be measured indirectly from the \emph{realized
  covariance} of the (squared) return process as follows: First, assume that
$(X_{t})_{t\geq 0}$ is square-integrable and stationary and for any real
positive number $\Delta$ we define the discrete-time process
$(Y_{n}^{\Delta})_{n\in\MN}$ by
\begin{align*}
  Y_{n}^{\Delta}\df Y_{n\Delta}-Y_{(n-1)\Delta}=\int_{(n-1)\Delta}^{n\Delta}(\alpha+X_{t}\beta)\D
  t+\int_{(n-1)\Delta}^{n\Delta}X_{t}^{1/2}\D W_{t},\quad n\in\MN.   
\end{align*}
The process $(Y_{n}^{\Delta})_{n\in\MN}$ is the sequence of logarithmic returns over
the intervals $[(n-1)\Delta,n\Delta]$ and it can be seen that for every
$n\in\MN$ the random variable $Y^{\Delta}_{n}$ given $X^{\Delta}_{n}$ is
normal distributed. More precisely, for every $n\in\MN$ we have
\begin{align*}
  Y^{\Delta}_{n}|X^{\Delta}_{n}\sim
  \mathcal{N}(\alpha\Delta+X^{\Delta}_{n}\beta,X^{\Delta}_{n})\quad\text{with
  }X_{n}^{\Delta}\df \int_{(n-1)\Delta}^{n\Delta}X_{t}\D
  t=X_{n\Delta}^{+}-X_{(n-1)\Delta}^{+},
\end{align*}
where $X_{t}^{+}\df \int_{0}^{t}X_{t}\D s$ for $t\geq 0$, see
also~\cite{BNS01}. We define the auto-covariance function
$\mathrm{acov}_{X_{t}}\colon \MRplus\to \cL(\MM_{d})$ of the process
$(X_{t})_{t\geq 0}$ at $t\geq 0$ by $\mathrm{acov}_{X_{t}}(h)\df
\mathrm{Cov}[\vect(X_{t+h}),\vect(X_{t})]$ where $h\geq 0$;
Analogously, we define the autocovariances $\mathrm{acov}_{X_{n}^{\Delta}}$ and
$\mathrm{acov}_{Y_{n}^{\Delta}(Y_{n}^{\Delta})^{\intercal}}(h)$ for the
processes $Y_{n}^{\Delta}(Y_{n}^{\Delta})^{\intercal}$ and
$(X^{\Delta}_{n})_{n\in\MN}$, respectively, in which case we restrict to $n,h\in\MN$. Following~\cite{PS09b}, we observe that the second order structure
of the spot covariance process $(X_{t})_{t\geq 0}$, respectively its discrete
difference process $(X^{\Delta}_{n})_{n\in\MN}$, is inherited by the second order
structure of the squared logarithmic return process
$(Y_{n}^{\Delta}(Y_{n}^{\Delta})^{\intercal})_{n\in\MN}$ such that
\begin{align}\label{eq:bk22-auto-covariance-Yn}
\mathrm{acov}_{Y_{n}^{\Delta}(Y_{n}^{\Delta})^{\intercal}}(h)=\mathrm{acov}_{X_{n}^{\Delta}}(h),\quad\text{for
  all }h,n\in\MN.
\end{align}
Moreover, we recall from~\cite[Theorem 3.2]{PS09b} that for general square-integrable stationary spot covariance processes
$(X_{t})_{t\geq 0}$ we have 
\begin{align}\label{eq:bk22-auto-covariance-Xn}
  \mathrm{acov}_{X_{n}^{\Delta}}(h)=r^{++}(h\Delta+\Delta)-2r^{++}(h\Delta)+r^{++}(h\Delta-\Delta),\quad
  \forall h,n\in\MN,
\end{align}
where $r^{++}\colon \MRplus\to \cL(\MM_{d})$ is given by
\begin{align}\label{eq:bk22-r++}
r^{++}(t)\df\int_{0}^{t}\int_{0}^{s}\mathrm{acov}_{X_{n}}(u)\D u\D s,\quad t\geq 0.
\end{align}

As we noted before, a prominent feature observed in many realized
(co)variance time-series is the \emph{memory effect}, which means that the
observed auto-covariances of the (squared) returns exhibits slower than pure
exponential decay or allows non-monotone configurations, see, e.g.~\cite{BL13, Bro14} and
the reference therein. To capture a great variety of different memory effects, we
propose to model the spot covariance process $(X_{t})_{t\geq 0}$
in~\eqref{eq:bk22-log-asset-price-model} by $\MS^{+}_{d}$-valued higher-order
MCARMA processes as they were introduced in this work. In the univariate case the idea to model the spot
variance process in stochastic volatility models by positive (higher-order)
CARMA processes goes back to~\cite{BL13}. To demonstrate the potential of
MCARMA based stochastic covariance models we compare the two auto-covariances
in~\eqref{eq:bk22-auto-covariance-Yn} and~\eqref{eq:bk22-auto-covariance-Xn}
in the following two cases: First, for the multivariate BNS model in
Section~\ref{sec:bk22-posit-semi-defin-1}, which was studied
in~\cite{PS09b}. Secondly, for a stochastic covariance model based on a
\emph{positive semi-definite well-balanced OU} processes introduced in
Section~\ref{sec:bk22-posit-semi-defin-2}. 

\subsection{Positive semi-definite Ornstein-Uhlenbeck type processes}\label{sec:bk22-posit-semi-defin-1} 
Stochastic covariance models based on positive semi-definite OU type processes
were studied extensively in~\cite{BNS07, PS09, PS09b} and we refer to it as
the (multivariate) BNS stochastic volatility model. The class of
matrix-valued OU processes is included in the class of matrix-valued MCARMA
processes as they form the class of MCAR processes of order one, see
Definition~\ref{def:bk22-CARMA-p-q}. We show that the positivity criteria in
Theorem~\ref{thm:bk22-positive-stationary} in case of a causal MCAR
process of order one coincides with the well-known (sufficient) criteria for
symmetric and positive semi-definite OU processes
in~\cite{BNS07}. Subsequently, we then recall the second order structure of OU
based stochastic covariance models from~\cite{PS09b}.\par{} 

Let $p=1$, $q=0$ and set $\cA_{1}=-\opA$ for some
$\opA\in\cL(\MM_{d})$ such that~\eqref{eq:bk22-stationary-solution-condition} is
satisfied and let $\cC_{q}=\opI$. Moreover, let $L$ be a two-sided L\'evy
process on $\MM_{d}$ and assume that $\EX{\log(\norm{L_{1}}_{d^{2}})}<\infty$. We
denote the MCAR process associated with the state space representation
$(\cA_{1},\opI,\opI,L)$ by $(X_{t})_{t\in\MR}$. This process is
an Ornstein-Uhlenbeck type process on $\MM_{d}$ and has the following
representation:  
\begin{align*}
  X_{t}=\E^{-(t-s)\opA}Z_{s}+\int_{s}^{t}\E^{-(t-u)\opA}\D L_{u},\quad s<t\in\MR.
\end{align*}
If we further assume that $\tau(-\opA)<0$, then
$X_{t}=\int_{-\infty}^{t}\E^{-(t-s)\opA}\D L_{s}$ is the unique stationary OU process adapted to the natural filtration of
$(L_{t})_{t\in\MR{}}$. It follows from~\cref{item:bk22-positive-stationary-2}, that
whenever $(L_{t})_{t\in\MR}$ is $\MS_{d}^{+}$-increasing, a sufficient
condition for $(X_{t})_{t\in\MR}$ to be $\MS_{d}^{+}$-valued is the
quasi-positivity of the operator $-\opA$ with respect to $\MS_{d}^{+}$. By recalling the definition of quasi-positivity this means
$\E^{-t\opA}(\MS_{d}^{+})\subseteq \MS_{d}^{+}$ for all $t\geq 0$ and we see
that this criteria coincides with the usual sufficient condition
for OU processes to be symmetric and positive semi-definite,
see~\cite[Proposition 4.1]{BNS07}. In the following we recall some statistical properties of $\MS_{d}^{+}$-valued
stationary OU process that follow from Propositions~\ref{prop:bk22-output-process-properties}
and~\ref{prop:bk22-second-order-property} and \cite[Proposition 4.7]{BNS07}:\par{} For
all $t\geq 0$ the mean of $X_{t}$ is given by 
\begin{align}\label{eq:bk22-mean-OU}
  \EX{X_{t}}=\opA^{-1}\mu_L=\opA^{-1}\left(\gamma_{L}+\int_{\MS_{d}^{+}\cap \set{\norm{\xi}_{d}>1}}\xi\,\nu^{L}(\D\xi)\right),
\end{align}
and the auto-covariance of $(X_{t})_{t\geq 0}$ at $t\geq 0$ is given by
\begin{align}\label{eq:bk22-covariance-OU}
  \Cov
  \left[\vect(X_{t}),\vect(X_{t+h})\right]=\E^{-h A^{\vect}}\cD^{-1}Q^{\vect},\quad
  h\geq 0,
\end{align}
where $\cQ^{\vect}=\vect\circ \cQ \circ \vect^{-1}$ and $\cD\in\cL(\MM_{d^{2}})$ is given by
\begin{align}\label{eq:bk22-D-operator}
\cD(X)=A^{\vect}X+X(A^{\vect})^{\intercal}\quad\text{with }A^{\vect}=\vect\circ\opA\circ\vect^{-1}.
\end{align}
Moreover, we have
\begin{align}
  \int_{0}^{t}X_{t}\D t=-\opA^{-1}(X_{t}-X_{0}-L_{t}),\quad t\in\MR,
\end{align}
and 
\begin{align}\label{eq:bk22-r++OU}
r^{++}(t)=\Big((A^{\vect})^{-2}\big(\E^{-A^{\vect} t}-\MI_{d^{2}}\big)+(A^{\vect})^{-1}t\Big)\cD^{-1}\cQ^{\vect} ,\quad t\geq 0, 
\end{align}
which by~\eqref{eq:bk22-auto-covariance-Yn}
and~\eqref{eq:bk22-auto-covariance-Xn} for every $n\in\MN$ yield
\begin{align}\label{eq:bk22-auto-covariance-squared-returns-OU}
  \mathrm{acov}_{Y_{n}^{\Delta}(Y_{n}^{\Delta})^{\intercal}}(h)=\E^{-A^{\vect}\Delta(h-1)}(A^{\vect})^{-2}\big(\MI_{d^{2}}-\E^{-A^{\vect}\Delta}\big)^{2}\cD^{-1}\cQ^{\vect},\quad
  h\in\MN. 
\end{align}

\subsection{Positive semi-definite well-balanced Ornstein-Uhlenbeck
  processes}\label{sec:bk22-posit-semi-defin-2}

In this section we introduce \emph{matrix-valued well-balanced OU processes} that extend the univariate
well-balanced OU processes from~\cite{SW11} to $\MS_{d}$-valued processes. The
authors in~\cite{SW11} proposed positive well-balanced OU processes as a model
for the spot variance process in stochastic volatility models. Here, we extend
this idea to the multivariate setting by studying $\MS_{d}^{+}$-valued
well-balanced OU processes and show that this class is well-suited to model the
spot covariance process in multivariate stochastic covariance
models. Moreover, we show that stochastic covariance models based on
well-balanced OU processes exhibit auto-covariance functions of the squared
logarithmic returns $Y_{n}^{\Delta}$ that have slower decay compared to the multivariate
BNS model.\par{} 
Let $\opA\in\cL(\MS_{d})$ with $\tau(-\opA)<0$ and define $\cA_{2}\in
\cL((\MM_{d})^{2})$, $E_{2}\in\cL(\MM_{d},(\MM_{d})^{2})$ and $\cC_{0}\in \cL((\MM_{d})^{2},\MM_{d})$ by
\begin{align}\label{eq:bk22-well-balanced-OU-state-space}
  \cA_{2}\df \begin{bmatrix}
    \op0 & \opI \\
    \opA^{2} & \op0
  \end{bmatrix},\quad E_{2}\df
                         \begin{bmatrix}
                           \op0\\
                           \opI
                         \end{bmatrix}
\quad\text{and}\quad \cC_{0}\df[-2 \opA,\op0].
\end{align}
Moreover, let $(L_{t})_{t\in\MR}$ be a square-integrable
$\MS_{d}^{+}$-increasing L\'evy process with expectation
$\mu_{L}\in\MS_{d}^{+}$ and covariance operator $\cQ\in \cL(\MS_{d})$.  
Let $(X_{t})_{t\geq 0}$ denote the output process of the state space model
associated with $(\cA_{2},E_{2},\cC_{0},L)$ and initial value $Z_{0}$, where
$Z_{0}$ is a random element in $(\MS_{d})^{2}$ (not necessarily
$\cF_{0}$-measurable). The output process $(X_{t})_{t\geq 0}$ is given by
\begin{align}\label{eq:bk22-well-balanced-OU}
  X_{t}&=\cC_{0}\E^{t\cA_{2}}Z_{0}+\int_{0}^{t}\cC_{0}\E^{(t-s)\cA_{2}}E_{2}\D
         L_{s},\quad t\geq 0,
\end{align}
and we note that since $\sigma(\cA_{2})=\sigma(\opA^{2})\subseteq
\MRplus\setminus\set{0}+\I\MR $ the process $(X_{t})_{t\geq 0}$ is non-stable,
but~\eqref{eq:bk22-stationary-solution-condition} is satisfied and thus following
Proposition~\ref{prop:bk22-state-space-MCARMA}, there exist a unique stationary
solution to~\eqref{eq:bk22-well-balanced-OU}.\par{}
In the following proposition we
define \emph{positive semi-definite well-balanced OU processes} as the unique
stationary and positive semi-definite solution
to~\eqref{eq:bk22-well-balanced-OU}. Moreover, we specify the stationary
distribution and present sufficient positivity conditions for $(X_{t})_{t\geq 0}$. 

\begin{proposition}\label{prop:bk22-positive-well-balanced-OU}
Let $(\cA_{2},E_{2},\cC_{0},L)$ be as above and in addition assume that $-\opA$
is quasi-positive. Let $(X_{t})_{t\geq 0}$ be as
in~\eqref{eq:bk22-well-balanced-OU} with $Z_{0}=(Z_{0}^{(1)},Z_{0}^{(2)})$
given by $Z_{0}^{(1)}=-\opA^{-1}\frac{(\pi_{1}+\pi_{2})}{2}$ and
$Z_{0}^{(2)}=\frac{\pi_{1}-\pi_{2}}{2}$ where $\pi_{1}\df\int_{-\infty}^{0}\E^{s\opA}\D L_{s}$ and
$\pi_{2}\df\int_{0}^{\infty}\E^{-s\opA}\D L_{s}$. Then $(X_{t})_{t\geq 0}$
is stationary, $\MS_{d}^{+}$-valued and can be represented as
\begin{align}\label{eq:bk22-well-balanced-representation}
 X_{t}=\int_{-\infty}^{t}\E^{-(t-s)\opA}\D
  L_{s}+\int_{t}^{\infty}\E^{-(s-t)\opA}\D L_{s},\quad t\geq 0.
\end{align}
Moreover, for all $t\geq 0$ we have
\begin{align}\label{eq:bk22-well-balanced-expectation}
  \EX{X_{t}}= 2 \opA^{-1}\mu_{L},
\end{align}
and the auto-covariance of $(X_{t})_{t\geq 0}$ at $t\geq 0$ is given by
\begin{align}\label{eq:bk22-well-balanced-auto-covariance}
  \Cov\left[\vect(X_{t+h}),
  \vect(X_{t})\right]&=\E^{-hA^{\vect}}\big(\E^{h\hat{\cD}}-\opI\big)\hat{\cD}^{-1}\cQ^{\vect}\nonumber\\
                     &\quad +2 \E^{-h A^{\vect}}\cD^{-1}\cQ^{\vect},\quad h\geq 0,   
\end{align}
where $\hat{\cD}\in\cL(\MS_{d})$ is given by
$\hat{\cD}(X)=A^{\vect}X-X(A^{\vect})^{\intercal}$, $\cD$ is as
in~\eqref{eq:bk22-D-operator} and $A^{\vect}\df\vect\circ
\opA\circ\vect^{-1}$. We call the process
$(X_{t})_{t\geq 0}$ in~\eqref{eq:bk22-well-balanced-representation} a
\emph{positive semi-definite well-balanced OU process}.    
\end{proposition}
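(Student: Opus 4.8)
The plan is to compute the block operator exponential $\E^{t\cA_2}$ explicitly and use it to reduce the state-space output~\eqref{eq:bk22-well-balanced-OU} with the prescribed initial value $Z_0$ to the two-sided representation~\eqref{eq:bk22-well-balanced-representation}; every remaining claim then follows from that representation. Since $\cA_2^{2}=\mathrm{diag}(\opA^{2},\opA^{2})$, splitting the exponential series into even and odd powers gives
\begin{align*}
  \E^{t\cA_2}=
  \begin{bmatrix}
    \cosh(\opA t) & \opA^{-1}\sinh(\opA t)\\
    \opA\sinh(\opA t) & \cosh(\opA t)
  \end{bmatrix},
\end{align*}
where $\opA$ is invertible because $\tau(-\opA)<0$ forces $0\notin\sigma(\opA)$. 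Applying $\cC_0=[-2\opA,\op0]$ yields $\cC_0\E^{t\cA_2}=[-2\opA\cosh(\opA t),\,-2\sinh(\opA t)]$ and in particular the kernel $\cC_0\E^{(t-s)\cA_2}E_2=-2\sinh(\opA(t-s))$. Inserting the given $Z_0^{(1)},Z_0^{(2)}$ into the homogeneous term and using $\cosh\pm\sinh=\E^{\pm}$ collapses the boundary contribution to $\E^{-\opA t}\pi_1+\E^{\opA t}\pi_2$; regrouping this with the stochastic integral $\int_0^t(-2\sinh(\opA(t-s)))\D L_s$ over the ranges $(-\infty,0]$, $[0,t]$ and $[0,\infty)$ produces exactly~\eqref{eq:bk22-well-balanced-representation}. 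I would then record it as $X_t=\int_{-\infty}^{\infty}\E^{-|t-s|\opA}\D L_s$, which makes the symmetric (``well-balanced'') past/future structure manifest.

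From this representation positivity is immediate: for $s\le t$ the kernel is $\E^{-(t-s)\opA}$ and for $s\ge t$ it is $\E^{-(s-t)\opA}$, both of the form $\E^{-u\opA}$ with $u\ge 0$, hence in $\pi(\MS_d^+)$ by quasi-positivity of $-\opA$; since $L$ is $\MS_d^+$-increasing, approximating each integral by sums $\sum\E^{-u_i\opA}(L_{s_{i+1}}-L_{s_i})$ keeps every summand in the closed cone $\MS_d^+$, so both integrals and their sum are $\MS_d^+$-valued. Stationarity follows since $X_t=\int_{\MR}\E^{-|t-s|\opA}\D L_s$ is a deterministic-kernel moving average against the stationary, independent increments of $L$; alternatively it identifies $X_t$ with the unique stationary solution of Proposition~\ref{prop:bk22-state-space-MCARMA}, available because $\sigma(\cA_2)=\sigma(\opA^{2})$ satisfies~\eqref{eq:bk22-stationary-solution-condition}. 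The mean~\eqref{eq:bk22-well-balanced-expectation} then drops out by taking expectations and using $\int_0^\infty\E^{-u\opA}\D u=\opA^{-1}$ for each of the two tails, giving $\opA^{-1}\mu_L+\opA^{-1}\mu_L=2\opA^{-1}\mu_L$.

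For the auto-covariance I would decompose $X_t=X_t^{(1)}+X_t^{(2)}$ into the causal part $X_t^{(1)}=\int_{-\infty}^t\E^{-(t-s)\opA}\D L_s$ and the anti-causal part $X_t^{(2)}=\int_t^\infty\E^{-(s-t)\opA}\D L_s$, pass to the vectorised form with $B\df A^{\vect}$ (here $K^{(d,d)}=\MI_{d^2}$, so the transpose twist of Proposition~\ref{prop:bk22-vectorized-state-space-model} disappears), and expand $\Cov[\vect(X_{t+h}),\vect(X_t)]$ into four cross-covariances via the isometry $\Cov[\int f\D L^{\vect},\int g\D L^{\vect}]=\int f(s)\cQ^{\vect}g(s)^{\intercal}\D s$. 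The past-future term $\Cov[\vect(X_{t+h}^{(2)}),\vect(X_t^{(1)})]$ vanishes because its integration ranges $[t+h,\infty)$ and $(-\infty,t]$ are disjoint for $h\ge 0$; the two matched terms contribute $\E^{-hB}\Sigma_\infty$ and $\Sigma_\infty\E^{-hB^{\intercal}}$ with $\Sigma_\infty=\cD^{-1}\cQ^{\vect}$ the Lyapunov solution of $\cD(\Sigma_\infty)=\cQ^{\vect}$; and the overlap $\Cov[\vect(X_{t+h}^{(1)}),\vect(X_t^{(2)})]$ runs only over $[t,t+h]$ and, after writing $\E^{sB}(\cdot)\E^{-sB^{\intercal}}=\E^{s\hat\cD}(\cdot)$ for $\hat\cD(X)=BX-XB^{\intercal}$, integrates to $\E^{-hB}(\E^{h\hat\cD}-\opI)\hat\cD^{-1}\cQ^{\vect}$, the first term of~\eqref{eq:bk22-well-balanced-auto-covariance}.

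The main obstacle is this covariance step, and specifically recombining the two matched tails $\E^{-hB}\Sigma_\infty+\Sigma_\infty\E^{-hB^{\intercal}}$ into the stated $2\E^{-hB}\cD^{-1}\cQ^{\vect}$: this identification uses $\E^{-hB}\Sigma_\infty=\Sigma_\infty\E^{-hB^{\intercal}}$, i.e.\ $B\Sigma_\infty=\Sigma_\infty B^{\intercal}$, which is exactly the symmetry convention already adopted for the OU covariance~\eqref{eq:bk22-covariance-OU} and which I would justify from the self-adjointness structure of the operators on $\MS_d$ (note $\hat\cD$ interchanges symmetric and antisymmetric matrices, so $\hat\cD^{-1}\cQ^{\vect}$ is antisymmetric and the $\hat\cD$-term is symmetric, consistent with the reversibility $R(h)=R(h)^{\intercal}$ forced by the even kernel $\E^{-|t-s|\opA}$). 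Care is also needed in checking convergence of $\pi_1,\pi_2$ and of the two-sided integrals, which is supplied by $\tau(-\opA)<0$ together with square-integrability of $L$, and in justifying the interchange of the deterministic kernels with the expectation.
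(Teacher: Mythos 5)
Your proof is correct and takes essentially the same route as the paper's: the even/odd split of $\E^{t\cA_{2}}$ into $\cosh/\sinh$ blocks, the collapse of the boundary terms under the prescribed $Z_{0}$ to the two-sided kernel $\E^{-|t-s|\opA}$, positivity of both tails from quasi-positivity of $-\opA$, the mean by integrating each tail, and the same four-term covariance decomposition with the vanishing past--future term and the $[t,t+h]$ overlap producing the $\hat{\cD}$-term. The one point where you are actually more careful than the paper --- computing the anti-causal matched term as $\cD^{-1}\cQ^{\vect}\E^{-h(A^{\vect})^{\intercal}}$ and flagging that merging it with $\E^{-hA^{\vect}}\cD^{-1}\cQ^{\vect}$ into $2\E^{-hA^{\vect}}\cD^{-1}\cQ^{\vect}$ requires $A^{\vect}\cD^{-1}\cQ^{\vect}=\cD^{-1}\cQ^{\vect}(A^{\vect})^{\intercal}$ --- is glossed over in the paper, which simply asserts that the anti-causal auto-covariance equals the causal one and appeals to the mutual commutation of $\cD$, $\hat{\cD}$ and $\exp(\opA)$, so your treatment matches the convention already implicit in~\eqref{eq:bk22-covariance-OU} and in the paper's own proof.
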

\begin{proof}
From the particular anti-diagonal form of $\cA_{2}$ we see that for every
$k\in\MN$ the following holds true:
\begin{align*}
  \cA_{2}^{2k}=\begin{bmatrix}
    (\opA^{2})^{k}& \op0 \\
    \op0 & (\opA^{2})^{k} 
  \end{bmatrix}\quad \text{and} \quad   \cA_{2}^{2k+1}=\begin{bmatrix}
    \op0 & (\opA^{2})^{k} \\
    (\opA^{2})^{k+1} & \op0
  \end{bmatrix},
\end{align*}
which gives
\begin{align*}
  \E^{t\cA_{2}}=\sum_{k=0}^{\infty}\frac{(t\cA_{2})^{2k}}{(2k)!}+\sum_{k=0}^{\infty}\frac{(t\cA_{2})^{2k+1}}{(2k+1)!}=\begin{bmatrix}
    \cosh(t\opA)  & \sinh(t\opA)\opA^{-1} \\
    \sinh(t\opA)\opA & \cosh(t\opA)
  \end{bmatrix},\quad t\geq 0,
\end{align*}
where $\cosh(t\opA)\df
\sum_{k=0}^{\infty}\frac{(t\opA)^{2k}}{2k!}$
and $\sinh(t\opA)\df
\sum_{k=0}^{\infty}\frac{(t\opA)^{2k+1}}{(2k+1)!}$. Note further, that
$\sinh(\opA)$, $\cosh(\opA)$ and $\opA$ all commute mutually. Hence
by~\eqref{eq:bk22-well-balanced-OU} we obtain
\begin{align}
  X_{t}&=[-2\opA,\op0]
         \begin{pmatrix}
           \cosh(t\opA)Z_{0}^{(1)}+\sinh(t\opA)\opA^{-1}Z_{0}^{(2)}\nonumber\\
           \sinh(t\opA)\opA Z_{0}^{(1)}+\cosh(t\opA)Z_{0}^{(2)}
         \end{pmatrix}
  \\
  &\quad+\int_{0}^{t}[-2\opA,\op0]
  \begin{pmatrix}
    \sinh((t-s)\opA)\opA^{-1}\D L_{s}\nonumber\\
    \cosh((t-s)\opA)\D L_{s}
  \end{pmatrix}\\
       &=-2\opA\cosh(t\opA )Z_{0}^{(1)}-2\sinh(\opA)Z_{0}^{(2)}-2\int_{0}^{t}\sinh((t-s)\opA)\D
         L_{s}\nonumber\\
       &=-\opA \E^{t\opA}Z_{0}^{(1)}-\opA\E^{-t\opA}Z_{0}^{(1)}+X_{t}^{(1)}-X_{t}^{(2)},\label{eq:bk22-well-balanced-stationary-1}
\end{align}
where in the last line~\eqref{eq:bk22-well-balanced-stationary-1} we used that $\cosh(t\opA)=\frac{1}{2}(\exp(t\opA)+\exp(-t\opA))$ and
$\sinh(t\opA)=\frac{1}{2}(\exp(t\opA)-\exp(-t\opA))$ and set
$X_{t}^{(1)}\df\E^{-t \opA}Z_{0}^{(2)}+\int_{0}^{t}\E^{-(t-s)\opA}\D L_{s}$, for
$t\geq 0$, as well as $X_{t}^{(2)}\df\E^{t\opA}Z_{0}^{(2)}+\int_{0}^{t}\E^{(t-s)\opA}\D
L_{s}$.\par{} Now, recall $\pi_{1}=\int_{-\infty}^{0}\E^{s\opA}\D L_{s}$ and
$\pi_{2}=\int_{0}^{\infty} \E^{-s\opA}\D L_{s}$, respectively, and inserting
the initial state $Z_{0}^{(1)}=-\opA^{-1}\frac{1}{2}(\pi_{1}+\pi_{2})$ and
$Z_{0}^{(2)}= \frac{1}{2}(\pi_{1}-\pi_{2})$
into~\eqref{eq:bk22-well-balanced-stationary-1} yields   
\begin{align}
  X_{t}&= \E^{t\opA}\pi_{2}+\int_{0}^{t}\E^{-(t-s)\opA}\D
         L_{s}+\E^{-t\opA}\pi_{1}-\int_{0}^{t}\E^{(t-s)\opA}\D
         L_{s}\nonumber \\
       &=\int_{0}^{\infty}\E^{(t-s)\opA}\D L_{s}+\int_{0}^{t}\E^{-(t-s)\opA}\D
         L_{s}+\int_{-\infty}^{0}\E^{-(t-s)\opA}\D
         L_{s}-\int_{0}^{t}\E^{(t-s)\opA}\D L_{s}\nonumber\\
       &= \int_{-\infty}^{t}\E^{-\opA(t-s)}\D L_{s}+\int_{t}^{\infty}\E^{-\opA(s-t)}\D L_{s},\label{eq:bk22-well-balanced-stationary-2} 
\end{align}
which proves representation~\eqref{eq:bk22-well-balanced-representation}. We
set $\tilde{X}^{(1)}_{t}=\int_{-\infty}^{t}\E^{-\opA(t-s)}\D L_{s}$ and
$\tilde{X}^{(2)}_{t}=\int_{t}^{\infty}\E^{-\opA(s-t)}\D L_{s}$. It follows from
Section~\ref{sec:bk22-posit-semi-defin-1}, that $\tilde{X}^{(1)}$ and
$\tilde{X}^{(2)}$ are stationary OU processes and due to
  the fact that $-\opA$ is assumed to be quasi-positive, it also follows that
both $\tilde{X}^{(1)}_{t}$ and $\tilde{X}^{(2)}_{t}$ are positive semi-definite
for all $t\geq 0$. Thus we conclude that also $(X_{t})_{t\geq 0}$ must be stationary, positive semi-definite and
possesses the representation~\eqref{eq:bk22-well-balanced-representation}. We
continue with the computation of the expectation and auto-covariance. It is
easy to see that for all $t\geq 0$ we have 
\begin{align*}
  \EX{\tilde{X}_{t}^{(1)}}=\EX{\tilde{X}_{t}^{(2)}}=\opA^{-1}\mu_{L},
\end{align*}
which implies~\eqref{eq:bk22-well-balanced-expectation}. It is left to
prove that the auto-covariance of $(X_{t})_{t\geq 0}$ at $t\geq 0$
satisfies~\eqref{eq:bk22-well-balanced-auto-covariance}. For this, we recall that for all
$t,h\geq 0$ we have 
\begin{align*}
 \Cov\left[\vect(X_{t+h}),\vect(X_{t})\right]=\EX{\vect(X_{t+h})\vect(X_{t})^{\intercal}}-\EX{\vect(X_{t+h})}\EX{\vect(X_{t})^{\intercal}},
\end{align*}
where according to~\eqref{eq:bk22-well-balanced-expectation} and linearity of the
expectation we see that
\begin{align*}
  \EX{\vect(X_{t+h})}=2(A^{\vect})^{-1}\EX{\vect(L_{1})},  
\end{align*}
and $\EX{\vect(X_{t})^{\intercal}}=2\EX{\vect(L_{1})}^{\intercal}((A^{\vect})^{\intercal})^{-1}$. Thus
by~\eqref{eq:bk22-well-balanced-representation}, we are left with the terms
\begin{align}\label{eq:bk22-well-balanced-stationary-3}
\EX{\vect(\tilde{X}_{t+h}^{(i)})\vect(\tilde{X}_{t}^{(j)})^{\intercal}},\quad i,j=1,2.
\end{align}
Note that for $i=j=1$ this term is the auto-covariance of
the stationary OU type process $(\tilde{X}_{t}^{(1)})_{t\geq
  0}$ adjusted by the following outer-square of the expectation:
\begin{align*}
\EX{\vect(\tilde{X}^{(1)}_{t+h})}\EX{\vect(\tilde{X}^{(1)}_{t})^{\intercal}}=(A^{\vect})^{-1}\EX{\vect(L_{1})}\EX{\vect(L_{1})^{\intercal}}((A^{\vect})^{-1}))^{\intercal}.  
\end{align*}
Note that following~\eqref{eq:bk22-covariance-OU}, the auto-covariance of
the process $(\vect(\tilde{X}^{(1)}_{t}))_{t\geq 0}$ is given by 
\begin{align*}
 \Cov\left[\vect(\tilde{X}^{(1)}_{t+h}),\vect(\tilde{X}^{(1)}_{t})\right]=\E^{-h
  A^{\vect}}\mathcal{D}^{-1}\cQ^{\vect},\quad h\geq 0. 
\end{align*}
Similarly, for $i=j=2$ a straightforward computation shows that the
auto-covariance of $(\tilde{X}_{t}^{(2)})_{t\geq 0}$ is the same as the
auto-covariance of $(\tilde{X}^{(1)}_{t})_{t\geq 0}$. 
% \begin{align*}
% \Cov\big[\vect(\tilde{X}^{(2)}_{t}),\vect(\tilde{X}^{(2)}_{t+h})\big]&=\E^{-h
%                                                                        A^{\vect}}\cD^{-1}\cQ^{\vect}.   
% \end{align*}
For $i=2$ and $j=1$, we see that by the definition of a two-sided L\'evy
process we have 
\begin{align*}
\Cov\left[\vect(\tilde{X}^{(2)}_{t+h}),\vect(\tilde{X}^{(1)}_{t})\right]=0.   
\end{align*}
Hence we are left with the last term
in~\eqref{eq:bk22-well-balanced-stationary-3}, that is $i=1$ and
$j=2$. Note first, that for every $h\geq 0$ we have
\begin{align}\label{eq:covariance-1} \vect(\tilde{X}_{t+h}^{(1)})\vect(\tilde{X}_{t}^{(2)})^{\intercal}&=\int\limits_{-\infty}^{t+h}\E^{-(t+h-u)A^{\vect}}\D\vect(L_{u})\int\limits_{t}^{\infty}\D\vect(L_{s})^{\intercal}\E^{-(s-t)(A^{\vect})^{\intercal}}.
\end{align}
Due to the fact that $(L_{t})_{t\in\MR}$ has stationary
  increments we note that the term on the right-hand side of~\eqref{eq:covariance-1} is in distribution equal to  
  \begin{align*} \int\limits_{-\infty}^{h}\int\limits_{0}^{\infty}\E^{-(h-u)A^{\vect}}\D\vect(L_{u})\D\vect(L_{s})^{\intercal}\E^{-s(A^{\vect})^{\intercal}}.
  \end{align*}
  Due to the independence of the increments of $(L_{t})_{t\in\MR}$ and $\D \vect(L_{u})\D \vect(L_{s})^{\intercal}=\D
  \vect(L_{u})\D\vect(L_{u})^{\intercal}+\D\big(\vect(L_{u})-\vect(L_{s})\big)\D\vect(L_{s})^{\intercal}$,
  it follows that 
  \begin{align*}    \EX{\vect(\tilde{X}_{t+h}^{(1)})\vect(\tilde{X}_{t}^{(2)})^{\intercal}}&=\int\limits_{0}^{h}\E^{-(h-u)   A^{\vect}}\EX{\D\vect(L_{u})\D\vect(L_{u})^{\intercal}}\E^{-u(A^{\vect})^{\intercal}}. 
  \end{align*}
  Thus, by compensating $\vect(\bar{L}_{u})\df \vect(L_{u})-u\EX{\vect(L_{1})}$
  we see that the quadratic variation $\llangle\,\cdot\,\rrangle$ satisfies $\EX{\D \vect(\bar{L}_{u})\D\vect(\bar{L}_{u})^{\intercal}}=\EX{\D \llangle
  \vect(\bar{L})\rrangle_{u}}= \cQ^{\vect}\D u$ and
\begin{align*}  \EX{\vect(\tilde{X}_{t+h}^{(1)})\vect(\tilde{X}_{t}^{(2)})^{\intercal}}&=\E^{-A^{\vect}h}\int\limits_{0}^{h}\E^{u
                                                                           A^{\vect}}\cQ^{\vect}\E^{-u(A^{\vect})^{\intercal}}\D
                                                                           u\\
                                                                         &\quad+(A^{\vect})^{-1}\EX{\vect(L_{1})}\EX{\vect(L_{1})^{\intercal}}\big((A^{\vect})^{\intercal})\big)^{-1}\\
                                                                         &=\E^{-h
                                                                           A^{\vect}}\int_{0}^{h}\E^{u\hat{\cD}}\cQ^{\vect}\D
                                                                           u\\
  &\quad +(A^{\vect})^{-1}\EX{\vect(L_{1})}\EX{\vect(L_{1})^{\intercal}}((A^{\vect})^{-1}))^{\intercal}.                                                                    
\end{align*}
The integral in the last equation can be computed as
\begin{align*}
  \int_{0}^{h}\E^{u\hat{\cD}}\cQ^{\vect}\D u=\hat{\cD}^{-1}\big(\E^{h\hat{\cD}}-\opI\big)\cQ^{\vect}.
\end{align*}
Hence by collecting all the terms
in~\eqref{eq:bk22-well-balanced-stationary-3} and due to the fact that $\cD$, $\hat{\cD}$ and
$\exp(\opA)$ mutually commute, we obtain~\eqref{eq:bk22-well-balanced-auto-covariance}.  
% where $\opI\in \cL(\MS_{d^{2}})$ and $\hat{\cD}\colon X\mapsto
% +X(A_{2}^{\vect})^{\intercal}-A_{2}^{\vect}X$ on $\MS_{d^{2}}^{+}$.
% \begin{align*}
%   \Cov\left[\vect(X_{t}), \vect(X_{t+h})\right]&= \sum_{i=1}^{2}\Cov\big[
%   \vect(\tilde{X}^{(i)}_{t}),
%   \vect(\tilde{X}^{(i)}_{t+h})\big]\\
%   &+\sum_{j,i=1}^{2}\Cov\big[
%   \vect(\tilde{X}^{(i)}_{t}),
%   \vect(\tilde{X}^{(i)}_{t+h})\big]
% \end{align*}
\end{proof}

\begin{remark}
Following our terminology from Definition~\ref{def:bk22-CARMA-p-q}, we see that
positive semi-definite well-balanced OU processes are non-causal MCARMA
processes of order $(2,0)$. Proposition~\ref{prop:bk22-positive-well-balanced-OU} shows that the
positivity criteria for non-stable state space models from
Theorem~\ref{thm:bk22-positivite-non-stable-MCARMA} is indeed not
necessary. As noted before, this happens as for stationary processes (such
as the well-balanced OU) it would suffice to ensure the positivity
only for its stationary states and not, as in
Theorem~\ref{thm:bk22-positivite-non-stable-MCARMA}, for all positive initial values $Z_{0}$. Moreover, note that the stationary distribution does not have to be
supported on the positive cone $(\MS_{d}^{+})^{p}$ for the output process
$(X_{t})_{t\geq 0}$ to be positive. Lastly, recall that the
  spot-covariance process is not directly observable in markets and
  independent of the Wiener noise $(W_{t})_{t\geq 0}$. In particular, the
  non-causality of the well-balanced OU is generally speaking not a concern. If, for some reason, also the spot covariance
  process shall be causal, we can follow the arguments in~\cite{BL21} to obtain a
  causal CARMA process, having the same second order moment
  structure, that could possibly substitute the former, if it is also
  $\MS_{d}^{+}$-valued.
\end{remark}
 
\subsection{Auto-covariance structure of positive semi-definite well-balanced OU
  based stochastic covariance models}
In this section we study the second order structure of stochastic covariance
models with spot covariance process modeled by a positive
semi-definite well-balanced OU process. In particular, we compare the
obtained auto-covariance of the squared (logarithmic)-return process with the
corresponding auto-covariance
in~\eqref{eq:bk22-auto-covariance-squared-returns-OU} in the multivariate BNS
model. In the next lemma we compute the function $r^{++}$
from~\eqref{eq:bk22-r++OU} for the positive semi-definite well-balanced OU process.

\begin{lemma}\label{lem:bkk22-r++-well-balanced-ou}
  Let $\opA\in\cL(\MM_{d})$ be such that $-\opA$ is quasi-positive and
  $\tau(-\opA)<0$ and denote by $(X_{t})_{t\geq 0}$ the associated positive
  semi-definite well-balanced OU process as in
  Proposition~\ref{prop:bk22-positive-well-balanced-OU}. For every $t\geq
  0$ we have:
  \begin{align}\label{eq:bk22-r++wbOU}
    r^{++}(t)=\left((A^{\vect})^{-2}\E^{-t
    A^{\vect}}\big(\cG\E^{t\hat{\cD}}-\opI\big)-\cD_{1}t+\cD_{2}\right)\hat{\cD}^{-1}\cQ^{\vect}+2r^{++}_{OU}(t),
  \end{align}
  where $r^{++}_{OU}(t)$ is as in~\eqref{eq:bk22-r++OU}, $\cG\in
  \cL(\MM_{d^{2}})$ is defined by
  $\cG(x)\df(A^{\vect})^{2}x\big((A^{\vect})^{\intercal})^{-2}$ and
  $\cD_{i}\in\cL(\MM_{d^{2}})$ for $i=1,2$ is given by $\cD_{i}(x)\df (A^{\vect})^{-i}x-x\big((A^{\vect})^{\intercal})^{-i}$.
\end{lemma}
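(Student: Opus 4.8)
The plan is to substitute the stationary auto-covariance~\eqref{eq:bk22-well-balanced-auto-covariance} of the well-balanced OU process, with the lag variable renamed $u$, into the defining double integral~\eqref{eq:bk22-r++}, and to evaluate the two resulting summands separately. The integrand reads $\E^{-uA^{\vect}}(\E^{u\hat{\cD}}-\opI)\hat{\cD}^{-1}\cQ^{\vect}+2\E^{-uA^{\vect}}\cD^{-1}\cQ^{\vect}$. Its second summand is exactly twice the stationary OU auto-covariance~\eqref{eq:bk22-covariance-OU}, so its double integral equals $2\,r^{++}_{OU}(t)$ directly from the definition of $r^{++}_{OU}$ in~\eqref{eq:bk22-r++OU}, and that part needs no further computation.

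For the first summand I would exploit the Sylvester-exponential identity $\E^{u\hat{\cD}}(Z)=\E^{uA^{\vect}}Z\,\E^{-u(A^{\vect})^{\intercal}}$, valid since $\hat{\cD}(Z)=A^{\vect}Z-Z(A^{\vect})^{\intercal}$. The crucial point is that the left matrix factor $\E^{-uA^{\vect}}$ cancels the factor $\E^{uA^{\vect}}$ produced by $\E^{u\hat{\cD}}$: writing $Y\df\hat{\cD}^{-1}\cQ^{\vect}$, one gets $\E^{-uA^{\vect}}\E^{u\hat{\cD}}(Y)=Y\,\E^{-u(A^{\vect})^{\intercal}}$, while $\E^{-uA^{\vect}}\,\opI(Y)=\E^{-uA^{\vect}}Y$, so the first summand becomes $Y\E^{-u(A^{\vect})^{\intercal}}-\E^{-uA^{\vect}}Y$. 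After this cancellation no operator exponentials remain, and the inner and outer integrations reduce to the elementary primitives $\int_{0}^{s}\E^{-uA^{\vect}}\,\D u=(A^{\vect})^{-1}(\opI-\E^{-sA^{\vect}})$ together with its transpose analogue.

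Carrying out both integrations and collecting terms by their $t$-dependence is then routine. The contribution linear in $t$ assembles into $-t\,\cD_{1}Y$ and the $t$-independent, exponential-free contribution into $\cD_{2}Y$, matching the definitions $\cD_{i}(x)=(A^{\vect})^{-i}x-x((A^{\vect})^{\intercal})^{-i}$. The two surviving exponential terms $Y\E^{-t(A^{\vect})^{\intercal}}((A^{\vect})^{\intercal})^{-2}$ and $-(A^{\vect})^{-2}\E^{-tA^{\vect}}Y$ must finally be repackaged into $(A^{\vect})^{-2}\E^{-tA^{\vect}}(\cG\E^{t\hat{\cD}}-\opI)Y$: expanding $\cG\E^{t\hat{\cD}}(Y)=(A^{\vect})^{2}\E^{tA^{\vect}}Y\E^{-t(A^{\vect})^{\intercal}}((A^{\vect})^{\intercal})^{-2}$ and using that the powers $(A^{\vect})^{\pm2}$ commute with $\E^{\pm tA^{\vect}}$ reproduces the first term, while $-\opI$ reproduces the second. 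Adding $2\,r^{++}_{OU}(t)$ then yields~\eqref{eq:bk22-r++wbOU}.

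The main obstacle is bookkeeping rather than conceptual: one must track carefully which symbols act by matrix multiplication on $\MM_{d^{2}}$ (the factors $\E^{\pm uA^{\vect}}$ and the powers $(A^{\vect})^{-i}$) and which act by operator application (the Sylvester-type maps $\hat{\cD},\cD,\cG,\cD_{i}$), since $A^{\vect}$ enters in both capacities, and then recognise the collected exponential terms as the compact $\cG\,\E^{t\hat{\cD}}$ form. A secondary point requiring care is that $\hat{\cD}$ is not invertible, its spectrum $\{\lambda_{i}-\lambda_{j}\}$ containing $0$; the symbol $\hat{\cD}^{-1}\cQ^{\vect}$ is therefore to be read through the entire combination $(\E^{u\hat{\cD}}-\opI)\hat{\cD}^{-1}=\int_{0}^{u}\E^{v\hat{\cD}}\,\D v$ already present in~\eqref{eq:bk22-well-balanced-auto-covariance}, whose singular contributions cancel across the collected terms and leave the stated expression well-defined.
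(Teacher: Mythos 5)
Your proposal is correct and follows essentially the same route as the paper's proof: substitute the auto-covariance \eqref{eq:bk22-well-balanced-auto-covariance} into \eqref{eq:bk22-r++}, recognize the second summand as $2r^{++}_{OU}(t)$, use the Sylvester identity $\E^{-uA^{\vect}}\E^{u\hat{\cD}}(Y)=Y\E^{-u(A^{\vect})^{\intercal}}$ to reduce the first summand to elementary exponential integrals, and repackage the collected terms via $\cG$, $\cD_{1}$, $\cD_{2}$. Your closing remark that $\hat{\cD}$ is singular and that $\hat{\cD}^{-1}\cQ^{\vect}$ must be read through $(\E^{u\hat{\cD}}-\opI)\hat{\cD}^{-1}=\int_{0}^{u}\E^{v\hat{\cD}}\,\D v$ is a point of rigor the paper's own proof passes over silently.
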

\begin{proof}
Let $T\geq 0$, then by definition of $r^{++}$ in~\eqref{eq:bk22-r++} we have
to compute
\begin{align*}
  r^{++}(t)&=\int_{0}^{t}\int_{0}^{s}\mathrm{acov}_{X_{T}}(u)\D u\D s,\quad t\geq 0,
\end{align*}
for $\mathrm{acov}_{X_{T}}(u)=\Cov\left[X_{T+u},X_{T}\right]$ given
by~\eqref{eq:bk22-well-balanced-auto-covariance}. By~\eqref{eq:bk22-covariance-OU}
we see that for all $u\geq 0$ the auto-covariance function
$\mathrm{acov}_{X_{T}}(u)$ is the sum of 
$\E^{-hA^{\vect}}\big(\E^{h\hat{\cD}}-\opI\big)\hat{\cD}^{-1}\cQ^{\vect}$ and
two times the auto-covariance function of a classical OU type process. We thus
see that
\begin{align*}
  r^{++}(t)&=\int_{0}^{t}\int_{0}^{s}\E^{-uA^{\vect}}\big(\E^{u\hat{\cD}}-\opI\big)\hat{\cD}^{-1}\cQ^{\vect}\D
             u\D s+2 r^{++}_{OU}(t)\\
           &=\int_{0}^{t}-\hat{\cD}^{-1}\cQ^{\vect}\big((A^{\vect})^{\intercal}\big)^{-1}\big(\E^{-s(A^{\vect})^{\intercal}}-\MI_{d^{2}}\big)\\
           &\qquad+(A^{\vect})^{-1}\big(\E^{-sA^{\vect}}-\MI_{d^{2}}\big)\hat{\cD}^{-1}\cQ^{\vect}\D
             s+2r_{OU}^{++}(t)\\
           &=\hat{\cD}^{-1}\cQ^{\vect}\big((A^{\vect})^{\intercal}\big)^{-2}(\E^{-t(A^{\vect})^{\intercal}}\!\!-\MI_{d^{2}})-(A^{\vect})^{-2}(\E^{-t
             A^{\vect}}\!\!-\MI_{d^{2}})\hat{\cD}^{-1}\cQ^{\vect}\\
  &\qquad
    +\hat{\cD}^{-1}\cQ^{\vect}\big((A^{\vect})^{\intercal}\big)^{-1}t-(A^{\vect})^{-1}\hat{\cD}^{-1}\cQ^{\vect}+2r^{++}_{OU}(t)\\
           &=\big((A^{\vect})^{-2}\E^{-t
             A^{\vect}}\big(\cG\E^{t\hat{\cD}}-\opI\big)-\cD_{1}t+\cD_{2}\big)\hat{\cD}^{-1}\cQ^{\vect}+2r^{++}_{OU}(t),  
\end{align*}
which proves~\eqref{eq:bk22-r++wbOU}.
\end{proof}

From Lemma~\ref{lem:bkk22-r++-well-balanced-ou}
and~\eqref{eq:bk22-auto-covariance-Xn} we see that the auto-covariance
function of the squared
(logarithmic)-returns~\eqref{eq:bk22-auto-covariance-Yn} in the positive
semi-definite well-balanced OU based stochastic covariance model is given by 
\begin{align}\label{eq:bk22-auto-covariance-squared-returns-wb-OU} \mathrm{acov}_{Y_{n}^{\Delta}(Y_{n}^{\Delta})^{\intercal}}(h)&=\E^{-A^{\vect}\Delta(h-1)}(A^{\vect})^{-2}(\cG\E^{\Delta(h-1)\hat{D}}\!-\opI)(\MI_{d^{2}}\!-\!\E^{-A^{\vect}\Delta})^{2}\hat{\cD}^{-1}\cQ^{\vect}\nonumber\\
  &\quad +
    2\E^{-A^{\vect}\Delta(h-1)}(A^{\vect})^{-2}\big(\MI_{d^{2}}-\E^{-A^{\vect}\Delta}\big)^{2}\cD^{-1}\cQ^{\vect},\quad
    h\in\MN, 
\end{align}
for every $n\in\MN$ and $\Delta>0$. Note that the term in the second line
of~\eqref{eq:bk22-auto-covariance-squared-returns-wb-OU} is simply two times
the auto-covariance of the multivariate BNS model (compare
with~\eqref{eq:bk22-auto-covariance-squared-returns-OU}). Thus, the interesting term
is the first one
in~\eqref{eq:bk22-auto-covariance-squared-returns-wb-OU}. Indeed, since
$\mathbf{A}\in\cL(\MS^{d})$ it follows from~\cite{PH81} that $A^{\vect}$ is symmetric. Then note that for every
$\Delta(h-1)>0$ we obtain the following expansion of the non-linear term
in the first line of~\eqref{eq:bk22-auto-covariance-squared-returns-wb-OU}:
\begin{align*}
(A^{\vect})^{-2}(\cG\E^{\Delta(h-1)\hat{\cD}}-\opI)\hat{\cD}^{-1}=\Delta(h-1)+\frac{1}{2}\hat{\cD}(\Delta(h-1))^{2}+\cO\big((\Delta(h-1))^{3}\big).  
\end{align*}
Therefore, we see that the auto-covariance function of the squared
  (logarithmic)-returns in the positive semi-definite well-balanced OU based model
  in~\eqref{eq:bk22-auto-covariance-squared-returns-wb-OU} admits \emph{polynomial
    diminished exponential decay} compared to (pure) exponential decay in case
  of the multivariate BNS model in~\eqref{eq:bk22-auto-covariance-squared-returns-OU}.  
Moreover, we observe the following small-time asymptotics of the auto-covariance
function $\mathrm{acov}_{Y_{n}^{\Delta}(Y_{n}^{\Delta})^{\intercal}}(h)$:
\begin{align*}  \mathrm{acov}_{Y_{n}^{\Delta}(Y_{n}^{\Delta})^{\intercal}}(h)\approx\Delta(h\!-\!1)\E^{-A^{\vect}\Delta(h-1)}\cQ^{\vect}\!+\mathcal{O}\big(\Delta(h\!-\!1))^{2}\E^{-^{\vect}\Delta(h-1)}\cQ^{\vect}\big),    
\end{align*}
for small values of $\Delta(h-1)$.\par{}
This shows that in the short-time lags the auto-covariance function of the
squared returns in well-balanced OU based models behave like $h\mapsto\!\Delta(h-1)\E^{-A^{\vect}\!\Delta(h-1)}\cQ^{\vect}$ (versus
$h\mapsto\E^{-A^{\vect}\Delta(h-1)}\cQ^{\vect}$ in the classical OU
case). Now, for simplicity assume that $\cQ^{\vect}$ is the
  identity-matrix. Then we note that although the auto-covariance function of
  classical OU based models do admit non-monotone behavior in certain
  directions, it is always monotone along a vector of maximal variance, i.e. in
  spectral norm $\norm{\cdot}_{2}$ the auto-covariance function is monotone. In
  contrast, we see that the auto-covariance function in the well-balanced OU
  case allows for non-monotone configurations along a vector of maximal variance
  as $h\mapsto \Delta(h\!-\!1)\E^{\tau(-A^{\vect})\Delta(h-1)} $ is non-monotone
  (note that $\tau(-A^{\vect})<0$ by assumption and
  $\norm{\E^{-A^{\vect}t}}_{2}=\E^{\tau(-A^{\vect}) t}$ due to the fact that 
  $A^{\vect}$ is diagonalizable). Note further that the attainable
auto-covariance functions of higher-order MCARMA based models are also
beyond the ones obtained from models based on superposition of positive semi-definite OU
processes, see~\cite{BNS11}.\par{}
In summary, the example of the well-balanced OU process demonstrates that the
class of higher-order MCARMA based stochastic covariance models is indeed a very
flexible model class providing multiple modeling options to capture short-memory
effects observed in financial and non-financial data.

% \section{Proof of Proposition~2.6}\label{sec:proof-prop-2.6}

% \section{Proof of Proposition~2.10}
% % \ref{prop:bk22-state-space-MCARMA}}
% \label{sec:bk22-note-compl-integr}

\nosectionappendix{}

\begin{toappendix}

\section{A submultiplicativity property of the Hadamard product}

\begin{lemma}\label{lem:hadamard-product}
    Let $d\in\MN$ and $A,B\in \MM_{d}$ be non-negative matrices,
    i.e. $A,B\in\pi(\MR_{d}^{+})$. Then for all $n\in\MN$ we have $(A\odot
    B)^{n}\preceq A^{n}\odot B^{n}$. 
  \end{lemma}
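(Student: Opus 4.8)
The plan is to unwind the matrix powers entrywise and to recognise the left-hand side as a sub-sum of the right-hand side, the only subtlety being that all discarded terms are non-negative. Recall that under the identification $\MM_{d}\simeq\cL(\MR_{d})$ the ordering $\preceq$ induced by $\pi(\MR_{d}^{+})$ is precisely the entrywise ordering: $X\preceq Y$ if and only if $(Y-X)_{k,l}\geq 0$ for all $k,l$. Hence the claim $(A\odot B)^{n}\preceq A^{n}\odot B^{n}$ is equivalent to the scalar inequalities $[(A\odot B)^{n}]_{k,l}\leq (A^{n})_{k,l}\,(B^{n})_{k,l}$ for every $1\leq k,l\leq d$, and it is exactly at this point that the hypothesis $A,B\in\pi(\MR_{d}^{+})$ will be used.

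First I would write both sides as sums over index sequences (walks of length $n$). Fixing $k_{0}=k$, $k_{n}=l$ and writing $A=(a_{i,j})$, $B=(b_{i,j})$, the Hadamard power expands as a single sum over one index sequence, $[(A\odot B)^{n}]_{k,l}=\sum_{k_{1},\dots,k_{n-1}}\prod_{r=1}^{n}a_{k_{r-1},k_{r}}b_{k_{r-1},k_{r}}$, whereas the product of ordinary powers expands as a double sum over two independent sequences, $(A^{n})_{k,l}(B^{n})_{k,l}=\sum_{k_{1},\dots,k_{n-1}}\sum_{l_{1},\dots,l_{n-1}}\big(\prod_{r}a_{k_{r-1},k_{r}}\big)\big(\prod_{r}b_{l_{r-1},l_{r}}\big)$. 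The first expression is exactly the diagonal part of the second, obtained by forcing $l_{r}=k_{r}$ for all $r$; since every summand of the double sum is a product of non-negative reals, discarding the off-diagonal terms can only decrease the value, which gives the asserted inequality. This is the entire argument, and the non-negativity of $A$ and $B$ enters only here.

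Alternatively I would argue by induction on $n$, which may read more cleanly. The base case $n=1$ is an equality. For the inductive step one combines the monotonicity of multiplication on the algebra cone $\pi(\MR_{d}^{+})$ with the elementary two-factor identity that for non-negative $P,Q,R,S\in\MM_{d}$ one has $(P\odot Q)(R\odot S)\preceq (PR)\odot(QS)$; the latter is proved by the same diagonal-versus-full-sum comparison, now with a single summation index. Applying the inductive hypothesis and then this lemma with $P=A^{n}$, $Q=B^{n}$, $R=A$, $S=B$ yields $(A\odot B)^{n+1}\preceq(A^{n}\odot B^{n})(A\odot B)\preceq A^{n+1}\odot B^{n+1}$.

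I expect no genuine obstacle; the content is purely combinatorial bookkeeping. The one point requiring care is that the comparison between the single and double sums is valid only because all entries are non-negative—so the statement is false without the hypothesis $A,B\in\pi(\MR_{d}^{+})$—and that multiplication is order-preserving on $\pi(\MR_{d}^{+})$, which holds since $\pi(\MR_{d}^{+})$ is an algebra cone, as noted immediately after its definition in the excerpt.
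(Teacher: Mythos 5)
Your proposal is correct, and in fact it contains two valid proofs. Your second (alternative) argument is essentially the paper's own proof: the paper proceeds by induction on $n$, verifying $n=2$ by the single-sum-versus-product-of-sums comparison and then, in the inductive step, multiplying the inductive hypothesis by the non-negative matrix $A\odot B$ (the paper multiplies on the left where you multiply on the right, an immaterial difference) and applying exactly your two-factor estimate $(P\odot Q)(R\odot S)\preceq (PR)\odot (QS)$ in the special case at hand. Your primary argument, however, is genuinely different and arguably cleaner: by expanding $[(A\odot B)^{n}]_{k,l}$ as a sum over a single walk $k=k_{0},k_{1},\ldots,k_{n}=l$ and $(A^{n})_{k,l}(B^{n})_{k,l}$ as a double sum over two independent walks, you identify the left-hand side as the diagonal sub-sum of the right-hand side and conclude in one stroke, since all discarded terms are products of non-negative entries. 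This one-shot combinatorial identification avoids induction entirely and makes transparent exactly where non-negativity is used; the paper's inductive route buys only that each step involves a single summation index rather than an $(n-1)$-fold one. Both arguments correctly pin down that $\preceq$ induced by $\pi(\MR_{d}^{+})$ is the entrywise ordering, which is the only structural fact needed.
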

  \begin{proof}
    We write $A=(a_{i,j})_{1\leq i,j\leq d}$ and $B=(b_{i,j})_{1\leq i,j\leq d}$
    and prove the statement by induction over $n\in\MN$. For $n=1$ the
    statement is trivial. In case of $n=2$, it follows from the non-negativity of $(a_{i,j})_{1\leq i,j\leq d}$ and
    $(b_{i,j})_{1\leq i,j\leq d}$ that for every $i,j\in \set{1,\ldots,d}$ we have
    \begin{align*}
      ((A\odot B)^{2})_{i,j}=\sum_{k=1}^{d}a_{i,k}b_{i,k}a_{k,j}b_{k,j}\leq
      \!\big(\sum_{k=1}^{d}a_{i,k}a_{j,k}\big)\big(\sum_{k=1}^{d}b_{i,k}b_{j,k}\big)\!=((A^{2}\odot B^{2}))_{i,j},
    \end{align*}
    which proves the statement for $n=2$. Now, suppose that $(A\odot B)^{n}\preceq
    A^{n}\odot B^{n}$ for some $n\in\MN$. We show that also $(A\odot B)^{n+1}\preceq
    A^{n+1}\odot B^{n+1}$ holds. Indeed, we have
    \begin{align*}
     ((A\odot B)^{n+1})_{i,j}=((A\odot B)(A\odot B)^{n})_{i,j}\leq ((A\odot B)A^{n}\odot B^{n})_{i,j}, 
    \end{align*}
    where we note that $((A\odot B)A^{n}\odot
    B^{n})_{i,j}=\sum_{k=1}^{d}a_{i,k}(A^{n})_{k,j}b_{i,k}(B^{n})_{k,j}$ and
    hence
    \begin{align*}
     ((A\odot B)^{n+1})_{i,j}&\leq
      \sum_{k=1}^{d}a_{i,k}(A^{n})_{k,j}b_{i,k}(B^{n})_{k,j}\\
      &\leq
      \big(\sum_{k=1}^{d}a_{i,k}(A^{n})_{k,j}\big)\big(\sum_{k=1}^{d}b_{i,k}(B^{n})_{k,j}\big)=(A^{n+1}\odot
      B^{n+1})_{i,j},
    \end{align*}
    which yields the desired inequality $(A\odot B)^{n+1}\preceq A^{n+1}\odot B^{n+1}$. 
  \end{proof}
    
\end{toappendix}

\bibliographystyle{unsrt}

%%%%%%%%%%%%%%%%%%%%%%%%%%%%%%%%%%%%%%%%%%%%%%%%%%%%%%%%%%%%%%%%%%%%%%%%%%%%%%
\end{document}